\newcommand{\bburl}[1]{\textcolor{blue}{\url{#1}}}
\newcommand{\seqnum}[1]{\href{https://oeis.org/#1}{\rm \underline{#1}}}
\newtheorem{thm}{Theorem}[section]
\newtheorem{cor}[thm]{Corollary}
\newtheorem{claim}[thm]{Claim}
\newtheorem{lem}[thm]{Lemma}
\newtheorem{prop}[thm]{Proposition}
\newtheorem{exa}[thm]{Example}
\newtheorem{defi}[thm]{Definition}
\newtheorem{rek}[thm]{Remark}
\numberwithin{equation}{section}
\DeclareFontFamily{U}{mathx}{}
\DeclareFontShape{U}{mathx}{m}{n}{<-> mathx10}{}
\DeclareSymbolFont{mathx}{U}{mathx}{m}{n}
\DeclareMathAccent{\widehat}{0}{mathx}{"70}
\DeclareMathAccent{\widecheck}{0}{mathx}{"71}
\begin{document}

\title[The Best Two-term Underapproximation by Egyptian Fractions]{A Threshold for the Best Two-term Underapproximation by Egyptian Fractions}

\author{H\`ung Vi\d{\^e}t Chu}

\email{\textcolor{blue}{\href{mailto:hungchu1@tamu.edu}{hungchu1@tamu.edu}}}
\address{Department of Mathematics, Texas A\&M University, College Station, TX 77843, USA}

\begin{abstract} 
Let $\mathcal{G}$ be the greedy algorithm that, for each $\theta\in (0,1]$, produces an infinite sequence of positive integers $(a_n)_{n=1}^\infty$  satisfying $\sum_{n=1}^\infty 1/a_n = \theta$. For natural numbers $p < q$, let $\Upsilon(p,q)$ denote the smallest positive integer $j$ such that $p$ divides $q+j$. Continuing Nathanson's study of two-term underapproximations, we show that whenever $\Upsilon(p,q) \leqslant 3$, $\mathcal{G}$ gives the (unique) best two-term underapproximation of $p/q$;
i.e., if $1/x_1 + 1/x_2 < p/q$ for some $x_1, x_2\in \mathbb{N}$, then $1/x_1 + 1/x_2 \leqslant 1/a_1+1/a_2$. However, the same conclusion fails for every $\Upsilon(p,q)\geqslant 4$. 

Next, we study stepwise underapproximation by $\mathcal{G}$. Let $e_{m} = \theta - \sum_{n=1}^{m}1/a_n$ be the $m$\textsuperscript{th} error term. We compare $1/a_m$ to a superior underapproximation of $e_{m-1}$, denoted by $N/b_m$ ($N \in\mathbb{N}_{\geqslant 2}$), and characterize when  $1/a_m = N/b_m$. One characterization is 
$a_{m+1}  \geqslant N a_m^2 - a_m + 1$. 
Hence, for rational $\theta$, we only have $1/a_m = N/b_m$ for finitely many $m$. However, there are irrational numbers such that $1/a_m = N/b_m$ for all $m$. Along the way, various auxiliary results are encountered. 
\end{abstract}

\subjclass[2020]{11A67, 11B99}

\keywords{Egyptian fraction; greedy algorithm; sequences}

\thanks{The author would like to thank the anonymous referee for a careful reading and useful suggestions that help improve the paper's expositions.}

\maketitle

\tableofcontents

\section{Introduction and main results}

An \textit{Egyptian} fraction is a fraction of the form $1/n$ for $n\in\mathbb{N}$. There has been extensive literature on representations of numbers by a finite sum of Egyptian fractions: \cite{AB15, BE22, CEJ12, Cu22, El16, EP21, Ep21, HV13, Ke21, LS22, Ma99}, to name a few. The present note is concerned with representing a number $\theta\in (0,1]$ as an infinite sum of Egyptian fractions. We use the infinite greedy algorithm $\mathcal{G}$ (recently examined in \cite{Ch23, Na23}), which at each step, chooses the largest Egyptian fraction so that the sum of all chosen fractions up to that step is strictly smaller than $\theta$. For example, the corresponding series for $\theta = 1/7$ is 
$$\frac{1}{8} + \frac{1}{57} + \frac{1}{3193} + \frac{1}{10192057} + \frac{1}{103878015699193} + \cdots.$$
We can describe $\mathcal{G}$ more concretely using the function $G:(0,1]\rightarrow\mathbb{N}$
$$G(\theta)\ :=\ \left\lfloor \frac{1}{\theta}\right\rfloor+1.$$
It is readily checked that $1/G(\theta)$ is the largest Egyptian fraction smaller than $\theta$; i.e.,
\begin{equation}\label{e13}\frac{1}{G(\theta)}\ <\ \theta\ \leqslant\ \frac{1}{G(\theta)-1}.\end{equation}
Hence, the denominators of fractions obtained from $\mathcal{G}$ are given recursively as
\begin{equation}\label{e-1}a_1\ :=\ G(\theta)\mbox{ and }a_m\ :=\ G\left(\theta - \sum_{n=1}^{m-1}\frac{1}{a_n}\right), \quad m\geqslant 2.\end{equation}
We denote the sequence $(a_n)_{n=1}^\infty$ above by $\mathcal{G}(\theta)$, which grows quickly. Particularly, 
consecutive terms of $(a_n)_{n=1}^\infty$ satisfy the following neat inequality (see \cite[(3)]{Na23})
 \begin{equation}\label{e1}a_1\geqslant 2\mbox{ and } a_{n+1}\ \geqslant\ a_n^2-a_n+1, \quad n\geqslant 1.\end{equation}
From the definition of $G$, \eqref{e-1}, and \eqref{e1}, we know that
$$0\ < \ \theta - \sum_{n=1}^{m-1} \frac{1}{a_n}\ \leqslant\ \frac{1}{a_{m}-1}\ \rightarrow\ 0;$$
thus, $\sum_{n= 1}^{\infty}1/a_n \ =\ \theta$, as desired. 
The sequence $\mathcal{G}(1)$ is the well-known Sylvester's sequence (\seqnum{A000058} in \cite{Sl23}; see also \cite{Na23, ES64, So05})
$$2, 3, 7, 43, 1807, 3263443, 10650056950807, 113423713055421844361000443,\ldots.$$
It is worth mentioning that the Sylvester sequence is minimal in the sense that the sequence satisfies \eqref{e1} with the smallest possible value of $a_1 (= 2)$ and with the equality $a_{n+1} = a_n^2 - a_n +1$. By induction, we also see that $\mathcal{G}(1) = (a_n)_{n=1}^\infty$ satisfies the recurrence $a_{n+1} = a_1a_2\cdots a_n+1$ for all $n\geqslant 1$ (see \cite[Theorem 1]{Na23}.)

For $\theta\in (0,1]$ and $\mathcal{G}(\theta) = (a_n)_{n=1}^\infty$, let $\sum_{n=1}^m 1/a_n$ be the $m$-term underapproximation by $\mathcal{G}$. 
Then $\sum_{n=1}^m 1/a_n$ is said to be the best $m$-term underapproximation of $\theta$ if for any integers $(x_n)_{n=1}^\infty\subset \mathbb{N}$ with $\sum_{n=1}^m 1/x_n < \theta$, we have 
$$\sum_{n=1}^m \frac{1}{x_n} \ \leqslant\ \sum_{n=1}^m \frac{1}{a_n} \ <\ \theta.$$
Due to the definition of $\mathcal{G}$, $1/a_1$ is the best one-term underapproximation. 
However, $\mathcal{G}$ may not give the best two-term underapproximation as the following example indicates. 

\begin{exa}\normalfont
Take $\theta = 5/16$. The two-term underapproximation by $\mathcal{G}$ is $1/4 + 1/17$. Observe that 
$$\frac{1}{4} + \frac{1}{17} \ <\ \frac{1}{5} + \frac{1}{9} \ <\ \frac{5}{16}.$$
\end{exa}

Nathanson \cite[Sections 6 and 7]{Na23} studied the best two-term underapproximation and gave a method to determine whether $\mathcal{G}$ gives the (unique) best  two-term underapproximation for any given $\theta\in (0,1]$. This method reduces the problem to checking a finite number of cases by computer, which he then demonstrated  by considering $\theta$ with small $\lfloor 1/\theta\rfloor\in \{1,2\}$ (\cite[Theorems 6 and 7]{Na23}). While this method is very neat and thorough, its limitation lies in the increasing number of cases to be checked as $\lfloor 1/\theta\rfloor$ increases. Therefore, it is a natural problem to construct a family of $\theta$ with arbitrarily large $\lfloor 1/\theta\rfloor$ (thus, applying Nathanson's method would not be feasible) such that $\mathcal{G}$ always gives the best two-term underapproximation for $\theta$ in the family. From \cite[Theorem 5]{Na23} (see Theorem \ref{Nathm} below), we know one such family 
$$\mathcal{F} = \{p/q\,:\, p, q\in \mathbb{N}, p < q, p \mid (q+1)\}.$$
We introduce the function $\Upsilon(p,q)$ that captures the divisibility between the numerator and denominator of fractions in $\mathcal{F}$.

\begin{defi}\normalfont
For $p, q\in \mathbb{N}$, let $\Upsilon(p, q)$ be the smallest positive integer $m$ such that $p$ divides $q+m$.
\end{defi}
Numbers $p/q\in \mathcal{F}$ have $\Upsilon(p,q) = 1$. 
The main ingredient for the proof of \cite[Theorem 5]{Na23} is a corollary of Muirhead’s inequality \cite{Na22}, which was directly proved by Ambro and Barc\v{a}u \cite{AB15}. The proof also takes advantage of the fact that $\Upsilon(p,q) = 1$. However, as our first result gives a new family with $\Upsilon(p,q) \in \{2,3\}$, we devise a different argument, which involves proving certain Diophantine inequalities (see Lemmas \ref{lp1} and \ref{lp11}). We state our first result.

\begin{thm}\label{mk0}
Let $p, q\in \mathbb{N}$ with $p < q$. If $\Upsilon(p,q) \leqslant 3$ and $p/q\neq 10/17$, then $\mathcal{G}$ gives the unique best two-term underapproximation of $p/q$. 

When $p/q = 10/17$, $\mathcal{G}$ still gives the best (not unique) two-term underapproximation. In particular, besides the underapproximation $1/2+1/12$ given by the greedy algorithm, there is exactly one other equal underapproximation, namely $1/3+1/4$. 

Furthermore, for each $k\geqslant 4$, there exist $p < q$ such that $\Upsilon(p,q)= k$, and $\mathcal{G}$ does not give the best two-term underapproximation. 
\end{thm}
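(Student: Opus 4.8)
The theorem has three parts, and I'll treat them separately. For the bulk affirmative claim ($\Upsilon(p,q)\leqslant 3$ forces $\mathcal{G}$ to give the best two-term underapproximation, with the single exception $10/17$), the plan is to follow the general strategy hinted at in the excerpt: reduce to a Diophantine comparison. Write $p/q$ with $\mathcal{G}(p/q)=(a_n)$, so $a_1=\lfloor q/p\rfloor+1$ and $1/a_1+1/a_2$ is the greedy two-term sum. Suppose $1/x_1+1/x_2<p/q$ for some $x_1\leqslant x_2$. Since $1/a_1$ is the best one-term underapproximation, $x_1\geqslant a_1$; if $x_1=a_1$ then $1/x_2<p/q-1/a_1$ and greediness of $a_2$ gives $1/x_2\leqslant 1/a_2$ immediately, with equality characterizing uniqueness failure. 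So the real work is the case $x_1\geqslant a_1+1$: we must show $1/x_1+1/x_2\leqslant 1/a_1+1/a_2$ whenever $1/x_1+1/x_2<p/q$. Here is where $\Upsilon(p,q)\leqslant 3$ enters: it pins down $q\bmod p\in\{p-1,p-2,p-3\}$, hence controls $a_1$ and the size of the error $e_1=p/q-1/a_1$ precisely enough to bound $a_2$ from below, and then one shows $1/(a_1+1)+1/x_2$ can never reach $p/q$ for the relevant range of $x_2$. I would organize this as the two lemmas the paper promises (Lemmas \ref{lp1} and \ref{lp11}), one handling $\Upsilon=2$ and one handling $\Upsilon=3$ (the $\Upsilon=1$ case is Theorem \ref{Nathm}); the exceptional fraction $10/17$ is caught by a finite check when the inequalities are tight.

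For the exact statement about $10/17$: here $a_1=2$ (since $\lfloor 17/10\rfloor=1$), and $p/q-1/2=3/34$, so $a_2=12$ ($\lfloor 34/3\rfloor+1=12$), giving the greedy sum $1/2+1/12=7/12$. One checks directly that $1/3+1/4=7/12$ as well, and then argues no third two-term Egyptian sum equals $7/12$ and is $<10/17$: any such sum has smallest term $\geqslant 2$; $x_1=2$ forces $1/x_2=1/12$; $x_1=3$ forces $1/x_2=1/4$; $x_1\geqslant 4$ makes the sum $\leqslant 1/2<7/12$. This is a short finite verification.

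The third part — for every $k\geqslant 4$ there exist $p<q$ with $\Upsilon(p,q)=k$ and $\mathcal{G}$ failing — is the part I'd expect to cost the most thought, since it requires an explicit construction rather than a case check. The plan is to reverse-engineer the Example with $\theta=5/16$: there $q\bmod p=1$ looks like $\Upsilon=1$, so that particular example won't do, but the mechanism is that the greedy first denominator $a_1$ is "too small," leaving an error $e_1$ whose best Egyptian underapproximation beats $1/a_2$ by splitting differently at the top. To get a prescribed $\Upsilon(p,q)=k$, I would look for $p/q$ of the form where $a_1=\lfloor q/p\rfloor+1$ is small (say $a_1=2$, i.e. $q/p\in[1,2)$, equivalently $p<q<2p$) and where there is a competitor of the shape $1/(a_1+1)+1/x_2=1/3+1/x_2$ with $1/3+1/x_2<p/q$ yet $1/3+1/x_2>1/2+1/a_2$. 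Writing $q=2p-r$ with $1\leqslant r<p$, one has $\Upsilon(p,q)=\Upsilon(p,2p-r)=r$ (the smallest $j$ with $p\mid 2p-r+j$ is $j=r$), so choosing $r=k$ directly realizes $\Upsilon=k$. Then $e_1=p/q-1/2=(2p-q)/(2q)=r/(2q)=k/(2q)$, and $a_2=\lfloor 2q/k\rfloor+1$. The competitor inequality $1/3<p/q$ holds iff $q<3p$, automatic here; and $1/3+1/x_2<p/q$ with $x_2$ as large as possible, against $1/2+1/a_2$: I'd pick $q$ (for fixed $k$) large in the residue class, so that $a_2\approx 2q/k$ is large and $1/a_2$ is small, while the "$1/3$ versus $1/2$" gap $1/6$ must be recouped — this forces choosing $p/q$ close to $1/2+1/6=2/3$ from below but not matching the greedy split, i.e. $p/q$ slightly less than $2/3$ with $q\equiv -k\pmod p$. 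Concretely I would try a one-parameter family such as $p=3k$, $q=6k-? $ tuned so that $q\equiv -k\pmod{3k}$ and $q<2p$, then verify the two strict inequalities $1/3+1/x_2<p/q\leqslant 1/3+1/(x_2-1)$ and $1/3+1/x_2>1/2+1/a_2$ by a direct estimate in $k$. The main obstacle is getting a single clean family that simultaneously (i) has $\Upsilon$ exactly $k$, not a proper divisor, (ii) makes the greedy choice genuinely suboptimal rather than merely non-unique, and (iii) admits an elementary verification uniform in $k$; I expect to need to allow a second free parameter (size within the residue class) and prove the inequalities hold for all sufficiently large choices, which suffices since the claim only asserts existence.
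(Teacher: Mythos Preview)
Your plan for the first two statements is essentially the paper's approach: reduce to the range $a_1+1\leqslant x_1\leqslant 2a_1-1$ via Nathanson's Lemma (which you do not mention explicitly, but which is what makes the problem finite), then derive a contradiction from the technical Diophantine Lemmas~\ref{lp1} and~\ref{lp11}. Two small imprecisions: your sentence ``one shows $1/(a_1+1)+1/x_2$ can never reach $p/q$'' misstates the target (the point is that whenever the competitor sum is below $p/q$ it is also at most $1/a_1+1/a_2$, and this must be shown for \emph{every} $x_1$ in $[a_1+1,2a_1-1]$, not only $x_1=a_1+1$); and in the $x_1=a_1$ case, equality $1/x_2=1/a_2$ is not a uniqueness failure but the greedy pair itself. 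The $10/17$ verification is fine in spirit.

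The real gap is in the third statement. Your construction fixes $a_1=2$ (i.e.\ $q=2p-k$) and takes $x_1=3$ as the competitor. This cannot work. With $a_1=2$ the only admissible competitor first term is $x_1=3$ (by the bound $x_1\leqslant 2a_1-1$), and then the required strict inequality $1/3+1/x_2>1/2+1/a_2$ amounts to $1/x_2-1/a_2>1/6$, while the greedy definitions force $1/x_2<(p/q)-1/3$ and $1/(a_2-1)\geqslant (p/q)-1/2$, so the gap is pinned at $1/6$ from above; one gets at best ties (as in $10/17$) but not strict failure. Your concrete suggestion $p=3k$, $q=5k$ collapses to $\theta=3/5$ independently of $k$, for which greedy in fact wins. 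This is not a matter of adding a second parameter within the residue class: the $a_1=2$ regime is exactly the range $\lfloor 1/\theta\rfloor=1$ that Nathanson already analysed, and greedy does give the best two-term underapproximation there.

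The paper's construction is quite different. It takes $p=k+1$ and $q=(k+1)kv-k$, so that $a_1=kv$ grows with $k$; the competitor is $x_1=a_1+1$. After computing $a_2$ and $G(p/q-1/(a_1+1))$ explicitly, the failure of greedy reduces to a floor inequality (Theorem~\ref{fthm}) in the two parameters $k,v$, and the existence of a suitable $v$ for each $k$ is then established by a four-way case split on $k\bmod 4$, with $v$ chosen as an explicit function of $k$ in each case. The verification in each case is a nontrivial computation (determining the fractional part of the floor argument exactly, then reducing to a quadratic inequality in an auxiliary parameter). Your proposal does not anticipate either the need for $a_1$ to grow or this mod-$4$ structure, so the third part would have to be redone from scratch along different lines.
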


\begin{rek}\normalfont
The case $\Upsilon(p,q) = 1$ in Theorem \ref{mk0} follows from \cite[Theorem 5]{Na23}.
\end{rek}

For our second result, let us take a closer look at each step of $\mathcal{G}$. For $m\geqslant 1$, $\mathcal{G}$ approximates the $(m-1)$\textsuperscript{th} error term 
$$e_{m-1}\ :=\ \theta - \sum_{n=1}^{m-1} \frac{1}{a_n}.$$
by $1/a_m$, the largest fraction with numerator $1$ and smaller than $e_{m-1}$. The restriction here is the numerator $1$, because we are using Egyptian fractions. This makes us wonder about a superior choice to approximate $e_{m-1}$ from below, where the numerator is $N$ for some $N\geqslant 2$. Denote the largest fraction of numerator $N$ and smaller than $e_{m-1}$ by $N/ b_m$ $(b_m\in \mathbb{N})$. This choice is indeed superior as by the definition of $N/b_m$, 
$$\frac{1}{a_m}\ =\ \frac{N}{Na_m}\ \leqslant\ \frac{N}{b_m} \ <\ e_{m-1}.$$
It is possible that $1/a_m < N/b_m$. For example, taking $\theta = 1$, $m = 1$, and $N = 2$, we easily compute that 
$$\frac{1}{a_1} \ =\ \frac{1}{2} \ <\ \frac{2}{3} \ =\ \frac{2}{b_1}\ < \ e_0 \ =\ 1.$$
When are the two choices of underapproximation, $1/a_m$ and $N/b_m$, the same? 
We shall give several characterizations of such situation, one of which states that $1/a_m = N/b_m$ if and only if 
\begin{equation}\label{f1} a_{m+1} \ \geqslant\ N a_m^2-a_m+1,\end{equation}
a generalization of \eqref{e1}. 
As we shall see, if  $N\geqslant 2$ and $\theta$ is rational, the above inequality fails for large $m$; therefore, fractions by $\mathcal{G}$ are eventually strictly dominated by $N/b_m$. On the other hand, there are irrational numbers (and only irrational numbers) such that $1/a_m = N/b_m$ for every $m$. It is interesting to see that $\mathcal{G}$ behaves differently between the rational and the irrational. Before stating our second theorem, we introduce the function $\Phi: (0,1]\rightarrow [1, \infty)$ (that appears in one of the characterizations)
$$\Phi(\theta)\ :=\ \frac{1}{1-\left\{\theta^{-1}\right\}}\ =\ \frac{1}{G(\theta)-1/\theta}, \quad \theta\in (0,1],$$ where $\{x\}: = x - \lfloor x\rfloor$ for any real $x$.
\begin{figure}[H]
\centering
\includegraphics[scale=1.15]{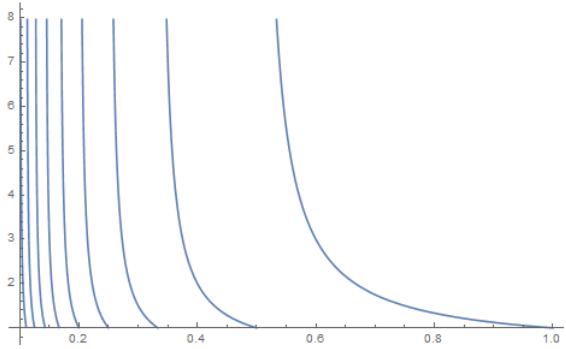}
\caption{Graph of $\Phi(x)$ for $x\in [1/10,1]$. The set of discontinuities of $\Phi$ is $D = \{1/n: n\in \mathbb{N}_{\geqslant 2}\}$. In particular, for $n\in\mathbb{N}_{\geqslant 2}$, $\lim_{\theta\rightarrow (1/n)^+}\Phi(\theta) = +\infty$, while $\lim_{\theta\rightarrow (1/n)^-}\Phi(\theta) \ =\ 1^+$.}
\end{figure}

\begin{thm}\label{mk}
Fix $N\in \mathbb{N}$. Let $\theta\in (0,1]$, $(a_n)_{n=1}^\infty = \mathcal{G}(\theta)$, and $e_{m} = \theta - \sum_{n=1}^m 1/a_n$. Let $N/b_m$ be the largest fraction strictly smaller than $e_{m-1}$. The following are equivalent:
\begin{enumerate}
\item[i)] $a_{m+1} \geqslant Na_m^2 - a_m + 1$, 
\item[ii)] $1/a_m = N/b_m$,
\item[iii)] $\Phi(e_{m-1}) \geqslant N$,
\item[iv)] $e_{m-1}\leqslant N/(N a_m - 1)$.
\end{enumerate}
\end{thm}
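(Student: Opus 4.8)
The plan is to prove the cycle of implications $i)\Rightarrow iv)\Rightarrow iii)\Rightarrow ii)\Rightarrow i)$, since several of these are essentially unwindings of the definitions of $G$, $\Phi$, and $b_m$. First I would record the basic inequalities that drive everything: by \eqref{e13} applied to $e_{m-1}$ we have $1/a_m < e_{m-1} \leqslant 1/(a_m-1)$, and by \eqref{e-1} the next error term is $e_m = e_{m-1} - 1/a_m$, which satisfies $0 < e_m \leqslant 1/(a_m-1) - 1/a_m = 1/(a_m(a_m-1))$ and also $1/a_{m+1} < e_m \leqslant 1/(a_{m+1}-1)$. The quantity $N/b_m$ is characterized by $N/b_m < e_{m-1} \leqslant N/(b_m-1)$, equivalently $b_m = \lfloor N/e_{m-1}\rfloor + 1$.

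For $i)\Leftrightarrow iv)$: the inequality $a_{m+1} \geqslant Na_m^2 - a_m + 1$ should be shown equivalent to $e_m \leqslant 1/(Na_m^2 - a_m)$ — here I would use that $a_{m+1} = G(e_m) = \lfloor 1/e_m\rfloor + 1$, so $a_{m+1} \geqslant Na_m^2 - a_m + 1 \iff \lfloor 1/e_m \rfloor \geqslant Na_m^2 - a_m \iff 1/e_m \geqslant Na_m^2 - a_m$ (using integrality of $Na_m^2 - a_m$) $\iff e_m \leqslant 1/(Na_m^2-a_m)$. Then substituting $e_m = e_{m-1} - 1/a_m$ and clearing denominators turns $e_{m-1} - 1/a_m \leqslant 1/(a_m(Na_m-1))$ into $e_{m-1} \leqslant 1/a_m + 1/(a_m(Na_m-1)) = N/(a_m(Na_m-1)) = N/(Na_m^2 - a_m)$, which is exactly iv) after noting $Na_m^2 - a_m = a_m(Na_m - 1)$; wait, iv) is stated as $e_{m-1} \leqslant N/(Na_m - 1)$, so I need to double‑check the algebra — I expect the correct reading gives $e_{m-1} \leqslant N/(Na_m^2 - a_m)$, and I would reconcile this with the stated form, possibly a harmless typo, by carrying the computation carefully.

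For $iii)\Leftrightarrow iv)$: unwind $\Phi(e_{m-1}) = 1/(G(e_{m-1}) - 1/e_{m-1}) = 1/(a_m - 1/e_{m-1})$. Then $\Phi(e_{m-1}) \geqslant N \iff a_m - 1/e_{m-1} \leqslant 1/N$ (both sides positive since $a_m - 1/e_{m-1} \in (0,1)$ by \eqref{e13}) $\iff 1/e_{m-1} \geqslant a_m - 1/N = (Na_m - 1)/N \iff e_{m-1} \leqslant N/(Na_m - 1)$, which is iv) as stated. So the stated form of iv) is the one that matches iii); I would then just make sure the $i)\Leftrightarrow iv)$ algebra lands on this same inequality (it should, since $1/a_m + 1/(a_m(Na_m-1))$ needs re‑expansion — indeed $\frac{1}{a_m}+\frac{1}{a_m(Na_m-1)} = \frac{Na_m-1+1}{a_m(Na_m-1)} = \frac{N}{Na_m-1}$, good, so everything is consistent).

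For $ii)\Rightarrow i)$ and $i)\Rightarrow ii)$: if $1/a_m = N/b_m$ then $b_m = Na_m$, and the defining inequality $N/b_m < e_{m-1} \leqslant N/(b_m - 1)$ becomes $1/a_m < e_{m-1} \leqslant N/(Na_m-1)$, giving iv) directly; conversely iv) says $e_{m-1} \leqslant N/(Na_m-1)$, and combined with $1/a_m < e_{m-1}$ (which is \eqref{e13} for $e_{m-1}$) this pins $b_m = \lfloor N/e_{m-1}\rfloor + 1$ into the range $[Na_m - 1 + 1, \ldots]$, i.e. $Na_m \leqslant b_m$ and $b_m \leqslant Na_m$ (the upper bound from $e_{m-1} > 1/a_m = N/(Na_m)$ giving $N/e_{m-1} < Na_m$ hence $\lfloor N/e_{m-1}\rfloor \leqslant Na_m - 1$), so $b_m = Na_m$ and $N/b_m = 1/a_m$. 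The main obstacle — really the only delicate point — is bookkeeping the strict versus non‑strict inequalities and the floor/integrality steps so that the boundary cases (when $1/e_{m-1}$ or $1/e_m$ is an integer, e.g. $\theta$ rational with the tail terminating, or $e_{m-1} = 1/(a_m-1)$) are handled correctly; I would isolate those in a short lemma or remark about when $\lfloor 1/x\rfloor = k$ versus $1/x = k$ and then the rest is routine substitution.
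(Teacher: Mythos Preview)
Your proposal is correct and rests on the same elementary manipulations as the paper's proof: both arguments simply unwind the defining inequalities for $G$, $\Phi$, and $b_m$ and use integrality to pass between floor bounds and the inequality $a_{m+1}\geqslant Na_m^2-a_m+1$. The only difference is organizational: the paper proves the cycle $i)\Rightarrow ii)\Rightarrow iii)\Rightarrow iv)\Rightarrow i)$ (handling $i)\Rightarrow ii)$ by contrapositive via $e_{m-1}\leqslant 1/a_m+1/(a_{m+1}-1)$, and $iv)\Rightarrow i)$ via $e_{m-1}>1/a_m+1/a_{m+1}$), whereas you show each of $i)$, $ii)$, $iii)$ is directly equivalent to $iv)$; your route is arguably tidier since it avoids the contrapositive and the series bound, and the ``delicate'' boundary cases you flag do not in fact arise because $1/a_m<e_{m-1}$ is always strict.
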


The following proposition, combined with Theorem \ref{mk}, shows that if $\theta$ is rational, $1/a_m < N/b_m$ for large $m$.

\begin{prop}\label{mk2}
Let $\theta = p/q\in (0,1]$, where $p, q\in \mathbb{N}$ and $\gcd(p,q) = 1$. Let $(a_n)_{n=1}^\infty = \mathcal{G}(\theta)$. We have
$$\left(\theta - \sum_{n=1}^{\ell}\frac{1}{a_n}\right)^{-1}\ \in\ \mathbb{N},$$
where $\ell$ is the smallest nonnegative integer such that $p$ divides $q+\ell$. As a result, 
\begin{equation}\label{e-30}a_{n+1}\ =\ a_n^2 - a_n + 1, \quad n\geqslant \ell+1.\end{equation}
\end{prop}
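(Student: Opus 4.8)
The plan is to track the denominators of the error terms $e_m = \theta - \sum_{n=1}^m 1/a_n$ written in lowest terms, and to show that their numerators strictly decrease until they reach $1$. First I would set $e_0 = p/q$ (already in lowest terms since $\gcd(p,q)=1$) and, for each $m\geqslant 0$, write $e_m = p_m/q_m$ with $\gcd(p_m,q_m)=1$, $p_0=p$, $q_0=q$. The recursion \eqref{e-1} gives $a_{m+1} = G(e_m) = \lfloor q_m/p_m\rfloor + 1$, and then
\[
e_{m+1}\ =\ e_m - \frac{1}{a_{m+1}}\ =\ \frac{p_m a_{m+1} - q_m}{q_m a_{m+1}}.
\]
The key observation is that the new numerator $p_m a_{m+1} - q_m$ satisfies $0 < p_m a_{m+1} - q_m < p_m$: positivity is exactly $1/a_{m+1} < e_m$ from \eqref{e13}, and the upper bound follows because $a_{m+1} = \lfloor q_m/p_m\rfloor + 1$ forces $p_m a_{m+1} \leqslant q_m + p_m$, with equality precisely when $p_m \mid q_m$. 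So after reducing $e_{m+1}$ to lowest terms, its numerator is at most $p_m a_{m+1} - q_m \leqslant p_m$, and it equals $p_m$ only in the degenerate reduced form; more usefully, the unreduced numerator is $< p_m$ unless $p_m \mid q_m$, in which case the unreduced numerator equals $p_m$ but then $e_{m+1}$ has denominator $q_m a_{m+1}$ and numerator $p_m$, which is not coprime unless $p_m = 1$.

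Here is where I would bring in the function $\Upsilon$ and the definition of $\ell$. The cleanest route is to show directly that the reduced numerator $p_m$ divides $p_{m-1}$ for each $m\geqslant 1$, and more precisely that $p_m = p_{m-1}$ exactly when $\Upsilon(p_{m-1}, q_{m-1}) = 1$ (i.e. $p_{m-1}\mid (q_{m-1}+1)$), while $p_m < p_{m-1}$ otherwise — actually, a sharper bookkeeping is needed, because the claim ties $\ell$ to $\Upsilon(p,q)$ relative to the \emph{original} $q$, not the running $q_m$. I would therefore instead argue by tracking $q \bmod p$: writing $q = kp + r$ with $0 < r < p$ (the case $r = 0$ meaning $\ell = 0$ and we are done), one has $a_1 = k+1$ and $e_1 = \frac{(k+1)p - q}{q(k+1)} = \frac{p - r}{q(k+1)}$, so the unreduced numerator of $e_1$ is $p - r = p - (q \bmod p)$, which by the definition of $\Upsilon$ equals $p - (p - \Upsilon(p,q)) = \Upsilon(p,q)$ when $\Upsilon(p,q) < p$, i.e. $\ell = \Upsilon(p,q) - 1$ indexes... — at this point the bookkeeping must be done carefully, matching the convention that $\ell$ is the least nonnegative integer with $p \mid (q+\ell)$, so that $\ell \equiv -q \pmod p$ and $0 \leqslant \ell \leqslant p-1$. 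The plan is to prove by induction on $j = 0, 1, \dots, \ell$ that the unreduced numerator of $e_j$ equals $p - ((q+j) \bmod p)$ when $p\nmid(q+j)$, hence is strictly positive and $\leqslant p$, and the unreduced denominator is a positive integer; then at $j = \ell$ we get $p \mid (q + \ell)$, the unreduced numerator becomes $\leqslant 1$, forcing $e_\ell = 1/q_\ell$ for an integer $q_\ell$, which is the claim $e_\ell^{-1} \in \mathbb{N}$.

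For the final assertion \eqref{e-30}: once $e_\ell = 1/c$ for some integer $c \geqslant 2$, the greedy algorithm applied to $1/c$ reproduces exactly the Sylvester-type recursion. Indeed $a_{\ell+1} = G(1/c) = c + 1$, and $e_{\ell+1} = 1/c - 1/(c+1) = 1/(c(c+1)) = 1/(a_{\ell+1}^2 - a_{\ell+1} + ... )$ — more precisely $c(c+1) = (a_{\ell+1}-1)a_{\ell+1} = a_{\ell+1}^2 - a_{\ell+1}$, so $e_{\ell+1} = 1/(a_{\ell+1}^2 - a_{\ell+1})$ and thus $a_{\ell+2} = G(e_{\ell+1}) = a_{\ell+1}^2 - a_{\ell+1} + 1$. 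Since $e_{\ell+1}$ is again a unit fraction, the same computation iterates, giving $a_{n+1} = a_n^2 - a_n + 1$ for all $n \geqslant \ell+1$ by induction. (Alternatively, one can cite \eqref{e1} together with Theorem \ref{mk} at $N=1$, but the direct computation is shortest.)

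I expect the main obstacle to be purely the indexing: getting the offset between $\ell$, $\Upsilon(p,q)$, and the running numerators exactly right, and handling the boundary case $\ell = 0$ (when $p \mid q$, equivalently $p = 1$ since $\gcd(p,q)=1$, so $\theta$ is itself a unit fraction or $\theta = 1$) uniformly with the generic case. The arithmetic content — that greedy subtraction sends $p/q$ to a fraction whose reduced numerator divides the old one, strictly decreasing until it hits $1$ — is elementary; the care is entirely in the $\bmod p$ bookkeeping and in confirming that no spurious cancellation in the reduction step makes a numerator jump back up.
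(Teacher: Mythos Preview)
Your first paragraph has the right engine and is essentially the paper's argument: from $e_m = p_m/q_m$ in lowest terms with $p_m \geqslant 2$, the unreduced numerator of $e_{m+1}$ is $p_m a_{m+1} - q_m = p_m - (q_m \bmod p_m)$, which is precisely the $\ell$-value of the running fraction $p_m/q_m$. Hence the reduced numerator $p_{m+1}$ is at most $\ell_m$, and the new $\ell$-value satisfies $\ell_{m+1} \leqslant p_{m+1} - 1 \leqslant \ell_m - 1$. The paper packages exactly this as strong induction on $\ell$. Your derivation of \eqref{e-30} from the unit-fraction stage is also fine and matches Remark~\ref{onr}.

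The gap is in what you switch to. First, the claim that the reduced $p_m$ divides $p_{m-1}$ is false: for $\theta = 5/11$ one gets $a_1 = 3$ and $e_1 = 4/33$ already in lowest terms, so $p_1 = 4 \nmid 5 = p_0$. Second, the proposed inductive formula ``unreduced numerator of $e_j$ equals $p - ((q+j) \bmod p)$'' fails in the same example already at $j=1$ (it predicts $3$; the actual value is $4$), and no closed form in the original $(p,q,j)$ alone can exist, since each greedy step depends on the running denominator. The worry that made you abandon your first thread --- that the statement's $\ell$ refers to the original $(p,q)$ rather than the running pair --- is misplaced: the running $\ell$-values strictly decrease, so $e_m$ is a unit fraction for some $m \leqslant \ell$, and then (as you correctly observe) it remains one thereafter, which gives $e_\ell^{-1} \in \mathbb{N}$. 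Go back to your first paragraph and induct on $\ell$.
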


To avoid repetition, for the next corollaries, let $\theta\in (0,1]$, $(a_n)_{n=1}^\infty = \mathcal{G}(\theta)$, and $e_{m} = \theta - \sum_{n=1}^m 1/a_n$. Fix $N\geqslant 2$ and let $N/b_m$ be the largest fraction strictly smaller than $e_{m-1}$. 

\begin{cor}
If $\theta$ is rational, then for sufficiently large $m$, 
$$\frac{1}{a_m} \ <\ \frac{N}{b_m}.$$
\end{cor}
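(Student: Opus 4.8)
The plan is to combine Proposition \ref{mk2} with the equivalence (i)$\Leftrightarrow$(ii) of Theorem \ref{mk}, plus the elementary inequality $1/a_m \leqslant N/b_m$ already observed in the discussion preceding Theorem \ref{mk}. Since $\theta$ is rational, write $\theta = p/q$ with $\gcd(p,q) = 1$ and let $\ell$ be the smallest nonnegative integer with $p \mid (q+\ell)$, as in Proposition \ref{mk2}. That proposition gives $a_{n+1} = a_n^2 - a_n + 1$ for every $n \geqslant \ell + 1$.

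Next I would compare this exact recurrence with the quantity appearing in condition (i) of Theorem \ref{mk}. Because $N \geqslant 2$ and $a_m \geqslant 1$, we have
$$\bigl(Na_m^2 - a_m + 1\bigr) - \bigl(a_m^2 - a_m + 1\bigr) \ =\ (N-1)a_m^2 \ >\ 0.$$
Hence, for every $m \geqslant \ell + 1$,
$$a_{m+1} \ =\ a_m^2 - a_m + 1 \ <\ Na_m^2 - a_m + 1,$$
so condition (i) of Theorem \ref{mk} fails. By the equivalence (i)$\Leftrightarrow$(ii), condition (ii) fails as well, i.e.\ $1/a_m \neq N/b_m$ for all $m \geqslant \ell + 1$.

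Finally, recall that $1/a_m = N/(Na_m) \leqslant N/b_m$ always holds, since $1/a_m < e_{m-1}$ and $N/b_m$ is by definition the largest fraction with numerator $N$ lying below $e_{m-1}$. Combining this with $1/a_m \neq N/b_m$ yields the strict inequality $1/a_m < N/b_m$ for all $m \geqslant \ell + 1$, which is exactly the claim (with ``sufficiently large $m$'' meaning $m \geqslant \ell + 1$). There is essentially no obstacle in this argument; the only point requiring care is that the strictness $(N-1)a_m^2 > 0$ genuinely uses $N \geqslant 2$, which is consistent with the corollary being false for $N = 1$ in view of \eqref{e1}.
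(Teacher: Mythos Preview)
Your proof is correct and follows exactly the route the paper intends: the paper's own proof consists of the single line ``Use Theorem \ref{mk} and Proposition \ref{mk2},'' and you have faithfully unpacked that by combining the eventual recurrence $a_{m+1}=a_m^2-a_m+1$ from Proposition \ref{mk2} with the failure of condition (i) in Theorem \ref{mk} when $N\geqslant 2$. The only cosmetic remark is that your closing sentence about the corollary being ``false for $N=1$'' is commentary rather than proof, and could be dropped.
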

\begin{proof}
Use Theorem \ref{mk} and Proposition \ref{mk2}.
\end{proof}

\begin{cor} The set of all $\theta\in (0,1]$ such that $1/a_m = N/b_m$ for all $m$ is precisely 
$$\Gamma \ =\ \left\{\sum_{n=1}^\infty \frac{1}{c_n}\,:\, c_1\geqslant 2 \mbox{ and } c_{n+1}\geqslant N c_n^2 - c_n + 1, \quad n\geqslant 1\right\}.$$

Furthermore, $\Gamma$ contains only irrational numbers.
\end{cor}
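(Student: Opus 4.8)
The plan is to prove the two assertions of the corollary separately, using Theorem \ref{mk} as the engine for the first and Proposition \ref{mk2} (or its proof) for the second.

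\medskip

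\textbf{The set description.} First I would unpack what "$1/a_m = N/b_m$ for all $m$" means via Theorem \ref{mk}: it is equivalent to $a_{m+1} \geqslant N a_m^2 - a_m + 1$ holding for every $m \geqslant 1$. Now take any $\theta \in (0,1]$ with this property and set $c_n := a_n$; by \eqref{e1} we have $c_1 = a_1 \geqslant 2$, and the hypothesis gives $c_{n+1} \geqslant N c_n^2 - c_n + 1$ for all $n \geqslant 1$, while $\theta = \sum_{n=1}^\infty 1/a_n = \sum_{n=1}^\infty 1/c_n$ by construction of $\mathcal{G}$. Hence $\theta \in \Gamma$. Conversely, given $\theta = \sum_{n=1}^\infty 1/c_n$ with $c_1 \geqslant 2$ and $c_{n+1} \geqslant N c_n^2 - c_n + 1 \geqslant c_n^2 - c_n + 1$, the series converges (comparison with the super-exponentially growing Sylvester-type bound) to some $\theta \in (0,1]$, and I must check that $\mathcal{G}(\theta) = (c_n)_{n=1}^\infty$, i.e. that the greedy algorithm actually reproduces the given denominators. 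This is the crux: one shows by induction that $c_m = G\big(\theta - \sum_{n=1}^{m-1} 1/c_n\big)$, which amounts to verifying the double inequality $1/c_m < \theta - \sum_{n=1}^{m-1} 1/c_n \leqslant 1/(c_m - 1)$ from \eqref{e13}. The left inequality is immediate since $\theta - \sum_{n<m} 1/c_n = \sum_{n \geqslant m} 1/c_n > 1/c_m$; the right inequality needs $\sum_{n \geqslant m+1} 1/c_n \leqslant 1/(c_m-1) - 1/c_m = 1/(c_m(c_m-1))$, and here the growth bound $c_{n+1} \geqslant N c_n^2 - c_n + 1$ (which is at least $c_n^2 - c_n + 1$) gives a geometric-type tail estimate that closes it — this tail bound is the one genuinely technical computation. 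Once $\mathcal{G}(\theta) = (c_n)$ is established, $a_{m+1} = c_{m+1} \geqslant N c_m^2 - c_m + 1 = N a_m^2 - a_m + 1$ for all $m$, so by Theorem \ref{mk}, $1/a_m = N/b_m$ for all $m$, placing $\theta$ in the first set. This proves set equality.

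\medskip

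\textbf{Irrationality.} For the second assertion I would argue that for $N \geqslant 2$, no $\theta \in \Gamma$ can be rational. Suppose $\theta = p/q \in \Gamma \cap \mathbb{Q}$. By Proposition \ref{mk2}, there is an $\ell$ (the smallest nonnegative integer with $p \mid q + \ell$) such that $a_{n+1} = a_n^2 - a_n + 1$ for all $n \geqslant \ell + 1$. But membership in $\Gamma$ forces $a_{n+1} \geqslant N a_n^2 - a_n + 1$ for all $n$, in particular for $n = \ell + 1$; combining, $N a_{\ell+1}^2 - a_{\ell+1} + 1 \leqslant a_{\ell+1}^2 - a_{\ell+1} + 1$, i.e. $N a_{\ell+1}^2 \leqslant a_{\ell+1}^2$, which is impossible since $a_{\ell+1} \geqslant 2 > 0$ and $N \geqslant 2$. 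Hence $\Gamma$ contains only irrationals.

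\medskip

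\textbf{Main obstacle.} The only real work is the converse direction of the set equality: verifying that the greedy algorithm $\mathcal{G}$ applied to $\theta = \sum 1/c_n$ recovers exactly the sequence $(c_n)$. This hinges on the tail estimate $\sum_{n \geqslant m+1} 1/c_n \leqslant 1/(c_m(c_m - 1))$, which I expect follows by induction from $c_{n+1} \geqslant c_n^2 - c_n + 1 > c_n(c_n - 1)$ together with a telescoping/geometric bound; everything else is bookkeeping with Theorem \ref{mk} and Proposition \ref{mk2}. I would present the tail estimate as a short standalone computation (perhaps noting it is essentially the argument already used in the excerpt to show $\sum 1/a_n = \theta$) and then assemble the pieces.
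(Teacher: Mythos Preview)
Your proposal is correct and follows essentially the same route as the paper. The paper's proof is terse: it invokes Theorem~\ref{mk} together with \cite[Corollary~3]{Na23} (which states precisely that any sequence $(c_n)$ with $c_1\geqslant 2$ and $c_{n+1}\geqslant c_n^2-c_n+1$ is recovered by $\mathcal{G}$ applied to $\sum 1/c_n$), and then Proposition~\ref{mk2} for the irrationality. What you identify as the ``main obstacle'' --- showing $\mathcal{G}(\theta)=(c_n)$ via the tail estimate $\sum_{n\geqslant m+1}1/c_n\leqslant 1/(c_m(c_m-1))$ --- is exactly the content of that cited corollary, and your sketch of how to prove it (backward induction using $c_{n+1}-1\geqslant c_n(c_n-1)$) is the standard argument; the paper simply outsources it.
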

\begin{proof}
The first statement is immediate from Theorem \ref{mk} and \cite[Corollary 3]{Na23}, which implies that each sequence $(c_n)_{n=1}^\infty$ in the definition of $\Gamma$ is in fact $\mathcal{G}(\sum_{n=1}^\infty 1/c_n)$.
The second statement follows from Proposition \ref{mk2}.
\end{proof}

Our next result generalizes \cite[Theorem 1]{Na23}.

\begin{thm}\label{mk4}
Let $\theta = p/q\in (0,1]$, where $p, q\in \mathbb{N}$. Let $\mathcal{G}(\theta) = (a_n)_{n=1}^\infty$. If $\Upsilon(p,q)$ divides $q$, then 
$$a_1\ =\ \frac{q+\Upsilon(p,q)}{p}, \mbox{ and }$$
$$\frac{p}{q}-\sum_{i=1}^{n-1}\frac{1}{a_i}\ =\ \frac{1}{\frac{q}{\Upsilon(p,q)}\prod_{i=1}^{n-1}a_i}, \quad n\geqslant 2.$$
Hence,
\begin{equation}\label{ef1}a_n \ =\ \frac{q}{\Upsilon(p,q)}\prod_{i=1}^{n-1}a_i + 1, \quad n\geqslant 2.\end{equation}
\end{thm}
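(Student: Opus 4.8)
The plan is to induct on $n$, establishing the formula for the error term
$$e_{n-1} \ =\ \frac{p}{q} - \sum_{i=1}^{n-1}\frac{1}{a_i} \ =\ \frac{1}{\frac{q}{\Upsilon}\prod_{i=1}^{n-1}a_i}$$
(writing $\Upsilon := \Upsilon(p,q)$ for brevity), where the hypothesis $\Upsilon \mid q$ guarantees that $\frac{q}{\Upsilon}$ is a positive integer, so the right-hand side is a genuine Egyptian fraction. Once this identity is in hand, the recursion \eqref{ef1} drops out immediately: since $e_{n-1}$ is already of the form $1/M$ with $M = \frac{q}{\Upsilon}\prod_{i=1}^{n-1}a_i \in \mathbb{N}$, \eqref{e13} forces $G(e_{n-1}) = M$ exactly (the largest Egyptian fraction strictly below $1/M$ being $1/(M+1)$ would be wrong — rather $1/M < 1/(M-1)$ and $G(1/M)=M+1$; I need to be careful here, see the base-case remark below), and then $a_n = M+1$... wait, this needs the precise relationship. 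Let me restructure: the cleanest route is to prove by induction that $e_{n-1}^{-1} = \frac{q}{\Upsilon}\prod_{i=1}^{n-1}a_i$ and simultaneously that $a_n = e_{n-1}^{-1} + 1$.

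First I would handle the base case $n = 1$. Here $e_0 = p/q$, and I must show $a_1 = G(p/q) = \lfloor q/p \rfloor + 1 = (q+\Upsilon)/p$. The key point is that $\Upsilon$ is \emph{the smallest} positive integer with $p \mid q + \Upsilon$, so $q + \Upsilon$ is the least multiple of $p$ strictly exceeding $q$ (strictly, because $\Upsilon \geqslant 1$; and note $p \nmid q$ here since otherwise $\Upsilon$ would be $p > $ anything smaller — actually if $p \mid q$ then $\Upsilon = p$, and the hypothesis $\Upsilon \mid q$ is automatic, and one checks $(q+p)/p = q/p+1 = \lfloor q/p\rfloor + 1$ still holds). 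Thus $\lceil (q+1)/p \rceil = (q+\Upsilon)/p$, equivalently $\lfloor q/p \rfloor + 1 = (q+\Upsilon)/p$, giving $a_1 = (q+\Upsilon)/p$. Then $e_0^{-1} = q/p$ is not an integer in general, so I should instead directly verify $e_1 = p/q - 1/a_1 = p/q - p/(q+\Upsilon) = p\Upsilon/(q(q+\Upsilon)) = 1/\big(\frac{q}{\Upsilon}(q+\Upsilon)/p\big) = 1/\big(\frac{q}{\Upsilon}a_1\big)$, which is the claimed formula at $n = 2$, using $\Upsilon \mid q$ to see $\frac{q}{\Upsilon}a_1 \in \mathbb{N}$. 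This also secures the $n=2$ instance of \eqref{ef1}: $a_2 = G(e_1) = \lfloor \frac{q}{\Upsilon}a_1\rfloor + 1 = \frac{q}{\Upsilon}a_1 + 1$.

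For the inductive step, suppose $e_{n-1} = 1/M_{n-1}$ with $M_{n-1} = \frac{q}{\Upsilon}\prod_{i=1}^{n-1}a_i \in \mathbb{N}$ (for some $n \geqslant 2$). By \eqref{e-1} and \eqref{e13}, $a_n = G(e_{n-1}) = G(1/M_{n-1}) = M_{n-1} + 1$, which is exactly \eqref{ef1} for index $n$. Then
$$e_n \ =\ e_{n-1} - \frac{1}{a_n} \ =\ \frac{1}{M_{n-1}} - \frac{1}{M_{n-1}+1} \ =\ \frac{1}{M_{n-1}(M_{n-1}+1)} \ =\ \frac{1}{M_{n-1}a_n} \ =\ \frac{1}{\frac{q}{\Upsilon}\prod_{i=1}^{n}a_i},$$
completing the induction. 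The whole argument is essentially a telescoping computation; the one genuinely delicate point — and the step I expect to need the most care — is the base case, specifically the arithmetic translation "$\Upsilon$ is minimal with $p\mid q+\Upsilon$" $\Longleftrightarrow$ "$\lfloor q/p\rfloor + 1 = (q+\Upsilon)/p$", including the edge case $p \mid q$ (where $\Upsilon = p$) and checking that the hypothesis $\Upsilon \mid q$ is exactly what is needed to keep every denominator $\frac{q}{\Upsilon}\prod a_i$ an integer so that the greedy step produces $a_n = \text{denominator} + 1$ rather than something larger.
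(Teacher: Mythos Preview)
Your argument is correct and follows essentially the same route as the paper: compute $a_1=(q+\Upsilon)/p$ from the minimality of $\Upsilon$, verify the error formula at $n=2$ by direct subtraction, and then induct using the fact that once $e_{n-1}=1/M_{n-1}$ with $M_{n-1}\in\mathbb{N}$ one has $a_n=M_{n-1}+1$ and $e_n=1/(M_{n-1}a_n)$. The only difference is cosmetic---the paper's base case for $a_1$ invokes $\Upsilon\leqslant p$ to evaluate the floor in one line, whereas you phrase it as ``$q+\Upsilon$ is the least multiple of $p$ above $q$''---and your handling of the edge case $p\mid q$ is a nice bit of extra care.
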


We also know a nice recurrence when $\Upsilon(p,q) = 2$ and $q$ is odd.

\begin{prop}\label{pl1}
Let $\theta = p/q\in (0,1]$, where $p, q\in \mathbb{N}$, $q$ is odd, and $\Upsilon(p,q) = 2$. Let $\mathcal{G}(\theta) = (a_n)_{n=1}^\infty$. Then 
\begin{equation*}a_1\ =\ \frac{q+2}{p}, \label{ef2}a_2\ =\ \left\lfloor \frac{qa_1}{2}\right\rfloor+1, \mbox{ and } \frac{p}{q}-\sum_{i=1}^{n-1}\frac{1}{a_i}\ =\ \frac{1}{q\prod_{i=1}^{n-1}a_i}, \quad n\geqslant 3.\end{equation*}
Hence,
\begin{equation*}a_n \ =\ q\prod_{i=1}^{n-1}a_i + 1, \quad n\geqslant 3.\end{equation*}
\end{prop}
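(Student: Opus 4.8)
The plan is to pin down $a_1$ and $a_2$ by direct computation, read off the first two error terms, and then finish by a telescoping induction for $n\geqslant 3$.

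First I would extract the arithmetic content of the hypotheses. Since $\Upsilon(p,q)=2$, we have $p\mid q+2$ and $p\nmid q+1$; together with $q$ odd this forces $p\geqslant 3$ (if $p=1$ then $\Upsilon(p,q)=1$, and if $p=2$ then $q$ odd gives $p\nmid q+2$) and $p\nmid q$ (otherwise $p\mid 2$). Hence $q\equiv p-2\pmod p$ with $1\leqslant p-2<p$, so $a_1=G(p/q)=\lfloor q/p\rfloor+1=(q+2)/p$, the first asserted identity. The key observation for what follows is that $q+2$ is odd, so its divisor $a_1$ is odd, and therefore $qa_1$ is odd.

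Next I would compute $e_1=p/q-1/a_1=(pa_1-q)/(qa_1)=2/(qa_1)$, using $pa_1=q+2$. Because $qa_1$ is odd, $\lfloor qa_1/2\rfloor=(qa_1-1)/2$, whence $a_2=G(e_1)=\lfloor qa_1/2\rfloor+1=(qa_1+1)/2$, which is the second identity; and then $e_2=e_1-1/a_2=(2a_2-qa_1)/(qa_1a_2)=1/(qa_1a_2)$ since $2a_2-qa_1=1$. This is exactly the $n=3$ case of $p/q-\sum_{i=1}^{n-1}1/a_i=1/\big(q\prod_{i=1}^{n-1}a_i\big)$.

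For the inductive step ($n\geqslant 3$), assuming the error-term identity at stage $n-1$ with its (positive integer) denominator $q\prod_{i=1}^{n-1}a_i$, the value $a_n=G(e_{n-1})$ equals $q\prod_{i=1}^{n-1}a_i+1$ — the ``hence'' formula — and subtracting $1/a_n$ telescopes the error to $\frac{1}{q\prod_{i=1}^{n-1}a_i}-\frac{1}{q\prod_{i=1}^{n-1}a_i+1}=\frac{1}{q\prod_{i=1}^{n}a_i}$, closing the induction. The only step that needs genuine attention is the parity bookkeeping in the first two paragraphs: one must invoke $q$ odd to see that $a_1$, and hence $qa_1$, is odd, which is precisely what makes the floor in the formula for $a_2$ collapse to a clean value so that the subsequent errors telescope exactly; after that everything is formal. (Alternatively one could quote Proposition \ref{mk2} with $\ell=2$ to know $e_2^{-1}\in\mathbb{N}$, but the direct route is self-contained and simultaneously identifies $a_1$ and $a_2$.)
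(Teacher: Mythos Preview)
Your proof is correct and follows essentially the same approach as the paper: both arguments establish $a_1=(q+2)/p$, verify the base case $e_2=1/(qa_1a_2)$ by exploiting that $q$ odd forces $qa_1$ odd, and then close with the same telescoping induction borrowed from Theorem~\ref{mk4}. Your presentation is slightly more direct in that you compute $e_1=2/(qa_1)$ first and \emph{derive} $a_2=(qa_1+1)/2$ from it, whereas the paper plugs the asserted formula $a_2=\lfloor qa_1/2\rfloor+1$ into the $n=3$ error identity and checks it reduces to the parity statement $\lfloor q(q+2)/(2p)\rfloor+\tfrac12=q(q+2)/(2p)$; but the underlying arithmetic is identical.
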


It would be interesting to see explicit formulas for $\mathcal{G}(\theta)$ for larger families of $\theta$ when $\Phi(p,q)$ does not divide $q$. Shifting away from the best $2$-term and stepwise underapproximation, we end this paper by discussing the best $m$-term underapproximation by $\mathcal{G}$. Let us recall a recent theorem of Nathanson.
\begin{thm}\cite[Theorem 5]{Na23}\label{Nathm}
Let $p<q\in \mathbb{N}$ such that $p$ divides $q+1$, and let $(a_n)_{n=1}^\infty = \mathcal{G}(p/q)$. For every $m\in \mathbb{N}$, $\sum_{n=1}^m 1/a_n$ is the unique best $m$-term underapproximation of $p/q$. 
\end{thm}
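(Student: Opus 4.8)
The plan is to proceed by induction on $m$, with the base case $m=1$ being the known fact that $1/a_1$ is the best one-term underapproximation (and uniqueness holding since $p/q$ is not itself an Egyptian fraction when $p>1$; the case $p=1$ is trivial). For the inductive step, suppose $\sum_{n=1}^{m}1/x_n < p/q$ with $(x_n)_{n=1}^{m}\subset\mathbb{N}$, and assume WLOG that $x_1\leqslant x_2\leqslant\cdots\leqslant x_m$. The key structural input is that $p\mid(q+1)$, so by Theorem \ref{mk4} (applicable since $\Upsilon(p,q)=1$ trivially divides $q$), we have $a_1=(q+1)/p$ and the error terms satisfy $e_{n-1}=p/q-\sum_{i=1}^{n-1}1/a_i = 1/(q\prod_{i=1}^{n-1}a_i)$, an exact unit fraction at every stage. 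This is the feature that makes the greedy algorithm optimal here: after peeling off $1/a_1$, the remaining target $e_1$ is again of the form $1/Q$, and $\mathcal{G}(1/Q)$ continues greedily, so the whole problem is self-similar.

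First I would handle the possibility $x_1 < a_1$: then $1/x_1 \geqslant 1/(a_1-1) \geqslant p/q$ by the defining inequality \eqref{e13} for $G$, already contradicting $\sum 1/x_n < p/q$ (strictly, once we note the sum has at least the one positive term $1/x_1$). Hence $x_1\geqslant a_1$. If $x_1 > a_1$, I would argue that $\sum_{n=1}^{m}1/x_n \leqslant m/x_1 \leqslant m/(a_1+1)$, and compare this against $\sum_{n=1}^{m}1/a_n$; since the $a_n$ grow doubly exponentially (by \eqref{ef1}), $\sum_{n=1}^{m}1/a_n$ is extremely close to $1/a_1$, so it should beat $m/(a_1+1)$ — this requires a short estimate showing $1/a_1 - m/(a_1+1) > -\sum_{n\geqslant 2}1/a_n$, i.e. roughly $1/(a_1(a_1+1)) > \sum_{n\geqslant 2}1/a_n - (m-1)/(a_1+1)$, which holds comfortably because $a_2 \geqslant a_1^2-a_1+1$ dwarfs everything. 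The remaining and main case is $x_1 = a_1$. Then $\sum_{n=2}^{m}1/x_n < p/q - 1/a_1 = e_1 = 1/(qa_1)$, which is exactly $1/a_1'$-type data for the reduced problem: indeed $e_1 = \mathcal{G}$-error of the rational number $1/(qa_1)$, whose numerator $1$ divides $(qa_1)+1$, so the inductive hypothesis applies to the $(m-1)$-term underapproximation of $e_1$, giving $\sum_{n=2}^{m}1/x_n \leqslant \sum_{n=2}^{m}1/a_n$, hence the full inequality. Uniqueness propagates the same way: equality forces $x_1=a_1$ (the other sub-cases were strict), then forces equality in the reduced problem, which by induction forces $x_n = a_n$ for all $n$.

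The main obstacle I anticipate is the case $x_1 > a_1$: one must rule out the scenario where using a slightly larger first denominator is compensated by the later terms. The clean way is the crude bound $\sum_{n=1}^m 1/x_n \le m/(a_1+1)$ versus $\sum_{n=1}^m 1/a_n > 1/a_1$, but this needs $m/(a_1+1) \le 1/a_1$, i.e. $m \le (a_1+1)/a_1 < 2$, which only works for $m=1$. So for $m\ge 2$ a more careful argument is needed: having fixed $x_1 = t > a_1$, the best the tail can do is again governed by the inductive hypothesis applied to the target $p/q - 1/t$ — but $p/q - 1/t$ need not have a numerator dividing its denominator plus one, so the induction does not directly apply. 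The resolution is to instead compare directly: $\sum_{n=1}^m 1/x_n \le 1/t + (m-1)/(t+1)$ if $x_2 > t$, or reduce to $x_2 = \cdots$; iterating, one shows $\sum_{n=1}^m 1/x_n$ is maximized (subject to $x_1 > a_1$) by a configuration that is still strictly below $\sum_{n=1}^m 1/a_n$, using that $\sum_{n=2}^\infty 1/a_n = e_1 = 1/(qa_1)$ is minuscule compared to the gap $1/a_1 - 1/(a_1+1) = 1/(a_1(a_1+1))$ — indeed $1/(qa_1) \le 1/(a_1 \cdot a_1) \le 1/(a_1(a_1+1)) \cdot (a_1+1)/a_1$, and one checks the precise inequality with the doubly-exponential growth \eqref{ef1} in hand. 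I would isolate this estimate as the one genuinely computational lemma and leave the rest to the clean self-similar induction.
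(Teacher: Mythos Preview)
Your reduction when $x_1=a_1$ is fine, and you correctly locate the obstacle at $x_1>a_1$. But the resolution you sketch does not work. The claim that the gap $1/a_1-1/(a_1+1)=1/(a_1(a_1+1))$ dominates what the remaining terms can recover is false: since $x_n\geqslant x_1\geqslant a_1+1$ for all $n$, the tail $\sum_{n\geqslant2}1/x_n$ can be as large as $(m-1)/(a_1+1)$, which already exceeds $1/(a_1(a_1+1))$ for every $m\geqslant2$. Your auxiliary inequality $1/(qa_1)\leqslant1/a_1^2$ (i.e.\ $q\geqslant a_1$) also fails whenever $p=1$, since then $a_1=q+1$. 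No term-by-term estimate of this shape will close the case, because there is no reason the optimal tail $\sum_{n\geqslant2}1/x_n$ subject to $x_1=a_1+1$ and $\sum 1/x_n<p/q$ should be small compared to $\sum_{n\geqslant2}1/a_n$.

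The argument the paper cites from Nathanson (and reproduces in full for the analogous Theorem~\ref{fthm2}) avoids splitting on $x_1$ altogether. The key input is the exact error formula $p/q-\sum_{n=1}^{m}1/a_n=1/\bigl(q\prod_{n=1}^{m}a_n\bigr)$ together with the trivial lower bound $p/q-\sum_{n=1}^{m}1/x_n\geqslant1/\bigl(q\prod_{n=1}^{m}x_n\bigr)$ (it is a positive rational with denominator dividing $q\prod x_n$); combining them gives $\prod_{n=1}^{m}a_n\leqslant\prod_{n=1}^{m}x_n$. One then takes the \emph{largest} index $s$ with $\prod_{n=s}^{m}a_n\leqslant\prod_{n=s}^{m}x_n$; maximality of $s$ forces $\prod_{n=s}^{s+j}a_n<\prod_{n=s}^{s+j}x_n$ for all $0\leqslant j<m-s$, and Theorem~\ref{ki} (a corollary of Muirhead's inequality) converts these product inequalities into $\sum_{n=s}^{m}1/x_n<\sum_{n=s}^{m}1/a_n$. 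If $s<m$ this, combined with the standing assumption $\sum_{n=1}^m 1/a_n\leqslant\sum_{n=1}^m 1/x_n$, yields a strict inequality on the first $s-1$ terms that contradicts the inductive hypothesis; hence $s=m$, which gives $a_m\leqslant x_m$ and reduces cleanly to the $(m-1)$-term case. The Muirhead step is the idea you are missing.
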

Nathanson \cite{Na23} asked ``Do other rational numbers have this property?". As an application of Proposition \ref{pl1}, we answer this question positively. 
\begin{thm}\label{fthm2}
Let $p < q\in \mathbb{N}$ with odd $q$ and $\Upsilon(p,q) = 2$. Let $(a_n)_{n=1}^\infty = \mathcal{G}(p/q)$. For every $m\in \mathbb{N}$, $\sum_{n=1}^m 1/a_n$ is the unique best $m$-term underapproximation of $p/q$. 
\end{thm}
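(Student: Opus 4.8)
\noindent\textit{Proof sketch.} The plan is to mirror the proof of Theorem~\ref{Nathm}, with one additional opening step that uses Proposition~\ref{pl1} to place the first error term into Nathanson's family $\mathcal{F}$. Write $\theta=p/q$, $(a_n)_{n=1}^\infty=\mathcal{G}(\theta)$, and $e_m=\theta-\sum_{n=1}^m 1/a_n$. From $\Upsilon(p,q)=2$ we get $p\mid q+2$, $p\nmid q+1$ (so $\gcd(p,q)=1$), and, since $q+2$ is odd, both $p$ and $a_1=(q+2)/p$ are odd. A direct computation with $a_1=(q+2)/p$ (from Proposition~\ref{pl1}) gives $e_1=\theta-1/a_1=2/(qa_1)$, and $qa_1$ is odd, so $2\mid qa_1+1$ and $2<qa_1$; hence $e_1\in\mathcal{F}$. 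Moreover, by the memorylessness of $\mathcal{G}$ (see \cite[Corollary 3]{Na23}), $\mathcal{G}(e_1)=(a_{n+1})_{n=1}^\infty$, so Theorem~\ref{Nathm} applied to $e_1$ tells us that $\sum_{n=2}^{k+1}1/a_n$ is the unique best $k$-term underapproximation of $e_1$ for every $k\geqslant 1$.

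Now fix $m$. For $m=1$ the statement is immediate: $1/a_1$ is the largest Egyptian fraction below $\theta$, and $1/x<\theta$ with $1/x\geqslant 1/a_1$ forces $x=a_1$. For $m\geqslant 2$, let $x_1\leqslant\cdots\leqslant x_m$ satisfy $\sum_{n=1}^m 1/x_n<\theta$. Since $1/x_1\leqslant\sum_{n=1}^m 1/x_n<\theta\leqslant 1/(a_1-1)$ by \eqref{e13}, we have $x_1\geqslant a_1$. If $x_1=a_1$, then $\sum_{n=2}^m 1/x_n<e_1$, so $(x_n)_{n=2}^m$ is an $(m-1)$-term underapproximation of $e_1$; by the first paragraph, $\sum_{n=2}^m 1/x_n\leqslant\sum_{n=2}^m 1/a_n$ with equality iff $x_n=a_n$ for all $n\geqslant 2$, and adding $1/a_1=1/x_1$ finishes this case, uniqueness included. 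It therefore remains to treat $x_1\geqslant a_1+1$, where I need the \emph{strict} inequality $\sum_{n=1}^m 1/x_n<\sum_{n=1}^m 1/a_n$, so that no tie with the greedy value can occur.

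For $x_1\geqslant a_1+1$ the goal, by Proposition~\ref{pl1}, is to show $\theta-\sum_{n=1}^m 1/x_n>e_m=1/\!\left(q\prod_{i=1}^m a_i\right)$, a quantity that is minuscule because the $a_i$ grow doubly-exponentially by \eqref{e1}. Writing $x_1=a_1+j$ with $j\geqslant 1$ and using $p\mid q+2$, one has $1/x_1=p/(q+2+jp)$ with $p\mid q+2+jp$, so $\theta-1/x_1=(jp+2)/\!\left(q(a_1+j)\right)$ has reduced denominator dividing $q(a_1+j)$. The argument then splits on size: if $x_1\geqslant m\,a_1$ (in particular if $x_1$ is large), then $\sum_{n=1}^m 1/x_n\leqslant m/x_1\leqslant 1/a_1<\sum_{n=1}^m 1/a_n$; otherwise $a_1+1\leqslant x_1<m\,a_1$, and one shows (iterating the same "either a tail is negligibly small, or the remaining denominators are controlled" dichotomy down the list $x_2,x_3,\dots$) that $\theta-\sum_{n=1}^m 1/x_n$ is a positive rational whose denominator divides $q(a_1+j)\prod_{n=2}^m x_n$, a number that is checked to be smaller than $q\prod_{i=1}^m a_i$ using the growth \eqref{e1}, whence $\theta-\sum_{n=1}^m 1/x_n\geqslant 1/\!\left(q(a_1+j)\prod_{n=2}^m x_n\right)>e_m$. (Equivalently, one may run an induction on $m$, bounding at each step the best $(m-1)$-term underapproximation of the shifted target $\theta-1/x_1$.)

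I expect this last case, $x_1>a_1$, to be the main obstacle. In the $\Upsilon(p,q)=1$ setting of Theorem~\ref{Nathm} the first error term $e_1=1/(qa_1)$ is already a unit fraction, so the recursion closes in one line; here one must additionally show that \emph{forcing} the leading denominator up from $a_1$ to $a_1+j$ costs strictly more than the greedy residual $e_m$, which is exactly where the Diophantine bookkeeping with the explicit denominator $q(a_1+j)$ and the doubly-exponential growth \eqref{e1} is needed, paralleling the treatment of the corresponding case in the proof of Theorem~\ref{Nathm}. Everything else reduces cleanly: $m=1$ is trivial, and the $x_1=a_1$ case collapses onto Theorem~\ref{Nathm} applied to $e_1=2/(qa_1)\in\mathcal{F}$.
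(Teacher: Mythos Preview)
Your reduction in the first two paragraphs is a genuinely nice observation: since $q$ is odd and $a_1=(q+2)/p$ is odd, $e_1=2/(qa_1)$ has $2\mid qa_1+1$, so $e_1\in\mathcal F$ and Theorem~\ref{Nathm} applied to $e_1$ cleanly disposes of $m=1$ and the subcase $x_1=a_1$. This is different from, and arguably slicker than, the paper's route for that subcase.

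The gap is the subcase $x_1>a_1$. Your proposed mechanism --- bounding the denominator of $\theta-\sum_{n=1}^m 1/x_n$ by $q(a_1+j)\prod_{n=2}^m x_n$ and then ``checking'' that this is smaller than $q\prod_{i=1}^m a_i$ --- is false as stated. Take $x_1=a_1+1$ and $x_2=\cdots=x_m=M$ with $M$ enormous: then $\sum_{n=1}^m 1/x_n<1/a_1<\theta$, yet $\prod_{n=1}^m x_n=(a_1+1)M^{m-1}$ is arbitrarily large, certainly exceeding $\prod_{i=1}^m a_i$. The vague ``tail small / denominators controlled'' dichotomy does not repair this: after peeling off the ``tail small'' branches you are still left with configurations where no product comparison of the kind you describe is available. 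Your appeal to growth~\eqref{e1} does not help either, since the $x_n$ are not constrained to grow like the $a_n$.

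Your characterization of Nathanson's proof is also off: in the $\Upsilon=1$ setting the recursion does \emph{not} ``close in one line'' when $x_1>a_1$. Both Nathanson's proof of Theorem~\ref{Nathm} and the paper's proof of Theorem~\ref{fthm2} run an induction on $m$ whose engine is the Muirhead-type inequality (Theorem~\ref{ki}): assuming $\sum_{n=1}^{m}1/a_n\leqslant\sum_{n=1}^{m}1/x_n$, Proposition~\ref{pl1} forces $\prod_{n=1}^{m} a_n\leqslant\prod_{n=1}^{m} x_n$; one then takes the maximal $s$ with $\prod_{n\geqslant s}a_n\leqslant\prod_{n\geqslant s}x_n$, applies Theorem~\ref{ki} to the suffix to get $\sum_{n\geqslant s}1/x_n<\sum_{n\geqslant s}1/a_n$, and finishes with the inductive hypothesis on the prefix. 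That Muirhead step is precisely the missing ingredient in your sketch, and the case $x_1>a_1$ is not treated separately but is absorbed into this uniform argument. Note, finally, that your reduction does not avoid Muirhead: it merely hides it inside the black box of Theorem~\ref{Nathm}.
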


Our present paper is structured as follows: Section \ref{best2} proves the first statement of Theorem \ref{mk0}, while Section \ref{>3} proves the third statement. (The second statement of Theorem \ref{mk0} is already contained in \cite[Theorem 6]{Na23}.) Section \ref{compare} proves Theorem \ref{mk} and Proposition \ref{mk2} along with its optimality. Finally, Section \ref{gen} proves Theorem \ref{mk4}, Proposition \ref{pl1}, and Theorem \ref{fthm2}. 


\section{The best two-term underapproximation when $\Upsilon(p,q)\leqslant 3$}\label{best2}
We shall establish the first statement in Theorem \ref{mk0}. Throughout this section, for a given $\theta\in (0,1]$, let $a_1, a_2$ be the first two terms in $\mathcal{G}(\theta)$ and let $2\leqslant x_1 \leqslant x_2$ be integers satisfying
\begin{equation}\label{ep8}\frac{1}{a_1} + \frac{1}{a_2}\ \leqslant\ \frac{1}{x_1} + \frac{1}{x_2} \ <\ \theta.\end{equation}

\subsection{Preliminaries}
We first recall \cite[Lemma 1]{Na23}, which gives us a necessary condition on when $\mathcal{G}$ does not give the unique best two-term underapproximation. 

\begin{lem}\cite[Lemma 1]{Na23}\label{l1Na23}
Let $a_1$ and $a_2$ be integers such that 
$$a_1\ \geqslant\ 2\mbox{ and }a_2\ \geqslant\ a_1(a_1-1)+1.$$
If $x_1$ and $x_2$ are integers such that
$$2\ \leqslant\ x_1\ \leqslant\ x_2\mbox{ and }(x_1,x_2)\ \neq\ (a_1, a_2)$$
and
$$\frac{1}{a_1}+\frac{1}{a_2}\ \leqslant\ \frac{1}{x_1} + \frac{1}{x_2}\ <\ \frac{1}{a_1}+\frac{1}{a_2-1}$$
then
$$a_1 + 1\ \leqslant\ x_1\ \leqslant\ 2a_1-1\ \leqslant\ x_2\ <\ \frac{a_1x_1}{x_1-a_1}$$
and 
$$x_2\ \leqslant\ a_2-1.$$
\end{lem}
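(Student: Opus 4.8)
The plan is to read off every stated bound from the two-sided estimate that the hypothesis on $a_2$ forces on the common value $S := 1/x_1 + 1/x_2$. Since $a_2 - 1 \geqslant a_1(a_1-1)$, we have $1/a_1 + 1/(a_2-1) \leqslant 1/a_1 + 1/(a_1(a_1-1)) = 1/(a_1-1)$; combined with the hypothesis $1/a_1 + 1/a_2 \leqslant S < 1/a_1 + 1/(a_2-1)$ this yields $1/a_1 < S < 1/(a_1-1)$. The second ingredient is that $x_1 \leqslant x_2$ lets us compare each of $x_1$ and $x_2$ with $S$ individually, via $2/x_1 \geqslant S \geqslant 2/x_2$.

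First I would bound $x_1$. From $2/x_1 \geqslant S > 1/a_1$ we get $x_1 < 2a_1$, i.e. $x_1 \leqslant 2a_1 - 1$. For the lower bound $x_1 \geqslant a_1 + 1$, I would rule out the two remaining possibilities: if $x_1 \leqslant a_1 - 1$ then $1/x_1 \geqslant 1/(a_1-1)$ and hence $S > 1/(a_1-1)$, contradicting the estimate above; if $x_1 = a_1$, then $S \geqslant 1/a_1 + 1/a_2$ gives $x_2 \leqslant a_2$ while $S < 1/a_1 + 1/(a_2-1)$ gives $x_2 \geqslant a_2$, so $x_2 = a_2$ and $(x_1,x_2) = (a_1,a_2)$, which is excluded. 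This settles $a_1 + 1 \leqslant x_1 \leqslant 2a_1 - 1$.

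Next I would treat $x_2$. If $x_2 \leqslant 2a_1 - 2$ then $x_1 \leqslant x_2 \leqslant 2a_1 - 2$ forces $S \geqslant 2/(2a_1 - 2) = 1/(a_1-1)$, which is impossible; hence $x_2 \geqslant 2a_1 - 1$, and with $x_1 \leqslant 2a_1 - 1 \leqslant x_2$ this gives the middle portion of the chain. The bound $x_2 < a_1 x_1/(x_1 - a_1)$ follows from $1/x_2 = S - 1/x_1 \geqslant 1/a_1 + 1/a_2 - 1/x_1 > 1/a_1 - 1/x_1 = (x_1-a_1)/(a_1 x_1) > 0$ (using $x_1 > a_1$), upon taking reciprocals. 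Finally, for $x_2 \leqslant a_2 - 1$ I would estimate $1/x_2 = S - 1/x_1 \geqslant 1/a_1 + 1/a_2 - 1/(a_1+1) = 1/a_2 + 1/(a_1(a_1+1))$ (using $x_1 \geqslant a_1 + 1$), and then invoke the hypothesis once more in the form $a_1(a_1+1) \leqslant a_2(a_2-1)$ --- valid since $a_2 - 1 \geqslant a_1(a_1-1)$ and $a_2 \geqslant a_1 + 1$ --- to get $1/x_2 \geqslant 1/a_2 + 1/(a_2(a_2-1)) = 1/(a_2-1)$, i.e. $x_2 \leqslant a_2 - 1$.

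Essentially every step is routine manipulation of the sandwich $1/a_1 < S < 1/(a_1-1)$ together with $x_1 \leqslant x_2$. The two places that need care --- and where the precise hypothesis $a_2 \geqslant a_1(a_1-1)+1$ is genuinely used --- are the case $x_1 = a_1$, which is the only point at which the exclusion $(x_1,x_2) \neq (a_1,a_2)$ enters and where one must use both sides of the sandwich simultaneously, and the final step $x_2 \leqslant a_2 - 1$, which needs the full strength of the growth condition through the inequality $a_1(a_1+1) \leqslant a_2(a_2-1)$.
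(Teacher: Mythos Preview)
Your argument is correct. Note, however, that the paper does not prove this lemma at all: it is quoted verbatim from \cite[Lemma 1]{Na23} and invoked as a black box, so there is no ``paper's own proof'' to compare against. Your self-contained derivation---pinning $S=1/x_1+1/x_2$ inside the interval $(1/a_1,\,1/(a_1-1))$ via the growth hypothesis $a_2-1\geqslant a_1(a_1-1)$, then reading off each bound from $2/x_1\geqslant S\geqslant 2/x_2$, with the case $x_1=a_1$ handled separately and the final inequality $x_2\leqslant a_2-1$ obtained through $a_1(a_1+1)\leqslant a_2(a_2-1)$---is clean and complete.
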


\begin{lem}\label{lp2}
If \eqref{ep8} holds for some $(x_1, x_2)\neq (a_1, a_2)$, then
$$a_1 + 1\ \leqslant\ x_1 \ \leqslant\ 2a_1-1.$$
\end{lem}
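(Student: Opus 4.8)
The plan is to observe that Lemma~\ref{lp2} is essentially a special case of Lemma~\ref{l1Na23}: I just need to verify that, under \eqref{ep8}, all the hypotheses of Lemma~\ref{l1Na23} are automatically satisfied, and then read off the desired conclusion.

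First I would record the structural facts about $a_1,a_2$. By \eqref{e1} we have $a_1\geqslant 2$ and $a_2\geqslant a_1^2-a_1+1 = a_1(a_1-1)+1$; in particular $a_2\geqslant 3$. These are exactly the constraints Lemma~\ref{l1Na23} imposes on its pair of ``greedy'' denominators. Next, the ordering $2\leqslant x_1\leqslant x_2$ holds by assumption, and we are in the case $(x_1,x_2)\neq (a_1,a_2)$ by hypothesis, so the conditions on $(x_1,x_2)$ are met as well.

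The only genuine step is producing the upper bound in the sandwich inequality of Lemma~\ref{l1Na23}, namely $\tfrac{1}{x_1}+\tfrac{1}{x_2}<\tfrac{1}{a_1}+\tfrac{1}{a_2-1}$. From \eqref{ep8} we already have $\tfrac{1}{x_1}+\tfrac{1}{x_2}<\theta$, so it suffices to show $\theta\leqslant \tfrac{1}{a_1}+\tfrac{1}{a_2-1}$. For this I would use \eqref{e-1}, which gives $a_2 = G(\theta-1/a_1)$; since \eqref{e13} guarantees $0<\theta-1/a_1\leqslant 1$, applying \eqref{e13} to the argument $\theta-1/a_1$ yields
\begin{equation*}
\frac{1}{a_2}\ <\ \theta-\frac{1}{a_1}\ \leqslant\ \frac{1}{a_2-1},
\end{equation*}
and rearranging the right-hand inequality gives $\theta\leqslant \tfrac{1}{a_1}+\tfrac{1}{a_2-1}$ (legitimate since $a_2-1\geqslant 2$). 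Combining this with \eqref{ep8} gives
\begin{equation*}
\frac{1}{a_1}+\frac{1}{a_2}\ \leqslant\ \frac{1}{x_1}+\frac{1}{x_2}\ <\ \frac{1}{a_1}+\frac{1}{a_2-1},
\end{equation*}
which is precisely the last hypothesis of Lemma~\ref{l1Na23}.

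With all the hypotheses verified, Lemma~\ref{l1Na23} applies and in particular yields $a_1+1\leqslant x_1\leqslant 2a_1-1$, which is the claim. I do not anticipate any real obstacle: the argument is a direct reduction, and the only places to be careful are confirming that $\theta-1/a_1$ genuinely lies in $(0,1]$ (so that $G$, and hence the inequalities \eqref{e13}, may be applied to it) and that $a_2-1\geqslant 1$ (so that $1/(a_2-1)$ is meaningful) --- both immediate from \eqref{e13} and \eqref{e1} respectively.
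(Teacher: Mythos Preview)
Your proposal is correct and follows essentially the same approach as the paper: both establish the sandwich $\tfrac{1}{a_1}+\tfrac{1}{a_2}\leqslant \tfrac{1}{x_1}+\tfrac{1}{x_2}<\theta\leqslant \tfrac{1}{a_1}+\tfrac{1}{a_2-1}$ from the greedy definition of $a_2$ and then invoke Lemma~\ref{l1Na23}. You are simply more explicit in verifying the auxiliary hypotheses (e.g., $a_2\geqslant a_1(a_1-1)+1$ from \eqref{e1}), which the paper leaves implicit.
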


\begin{proof}
Since $a_1$ and $a_2$ are the first two terms obtained from the greedy algorithm $\mathcal{G}$ applied to $\theta$, 
$$\frac{1}{a_1} + \frac{1}{a_2} \ <\ \theta \ \leqslant\ \frac{1}{a_1} + \frac{1}{a_2-1}.$$
We now use Lemma \ref{l1Na23} to reach the desired conclusion.
\end{proof}

\begin{lem}\label{lp3}
Assume that \eqref{ep8} holds for some $(x_1,x_2)\neq (a_1,a_2)$. Let $x_2' = G(\theta-1/x_1)$. We have
\begin{equation}\label{ep9}x'_2\ \leqslant\ \left(\frac{1}{a_1} + \frac{1}{a_2} - \frac{1}{x_1}\right)^{-1}.\end{equation}
\end{lem}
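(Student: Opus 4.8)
The plan is to prove Lemma~\ref{lp3} by unwinding the definition of $x_2' = G(\theta - 1/x_1)$ and combining it with the inequality \eqref{ep8}. First I would record the basic fact from \eqref{e13} applied to $\theta - 1/x_1$: by definition of $G$ we have
$$\frac{1}{x_2'} \ <\ \theta - \frac{1}{x_1} \ \leqslant\ \frac{1}{x_2'-1},$$
provided $\theta - 1/x_1 > 0$, which holds since Lemma~\ref{lp2} gives $x_1 \geqslant a_1+1 > a_1$ and hence $1/x_1 < 1/a_1 < \theta$. So $x_2'$ is a well-defined positive integer.

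Next I would use \eqref{ep8}, which says $1/a_1 + 1/a_2 \leqslant 1/x_1 + 1/x_2$, i.e. $1/a_1 + 1/a_2 - 1/x_1 \leqslant 1/x_2$. Since $1/x_2 < \theta - 1/x_1$ (again from \eqref{ep8}, as $1/x_1 + 1/x_2 < \theta$), the quantity $1/a_1 + 1/a_2 - 1/x_1$ is a positive rational number strictly less than $\theta - 1/x_1$. Now, $1/x_2'$ is the \emph{largest} Egyptian fraction strictly smaller than $\theta - 1/x_1$; however $1/a_1 + 1/a_2 - 1/x_1$ need not be an Egyptian fraction, so I cannot directly compare it to $1/x_2'$ via the "largest" property. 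Instead I should compare $1/x_2$ itself: since $1/x_2$ \emph{is} an Egyptian fraction and $1/x_2 < \theta - 1/x_1$, maximality of $1/x_2'$ gives $1/x_2 \leqslant 1/x_2'$, i.e. $x_2' \leqslant x_2$. Combined with $1/a_1 + 1/a_2 - 1/x_1 \leqslant 1/x_2$, this yields
$$\frac{1}{a_1} + \frac{1}{a_2} - \frac{1}{x_1} \ \leqslant\ \frac{1}{x_2} \ \leqslant\ \frac{1}{x_2'},$$
and taking reciprocals (both sides positive) gives exactly \eqref{ep9}.

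The only genuinely delicate point — the one I would flag as the main obstacle — is confirming positivity of the left-hand quantity $1/a_1 + 1/a_2 - 1/x_1$ so that inverting the inequality is legitimate and the statement of \eqref{ep9} even makes sense. This follows because $1/a_1 + 1/a_2 \leqslant 1/x_1 + 1/x_2$ forces $1/a_1 + 1/a_2 - 1/x_1 \leqslant 1/x_2$, and $1/x_2 > 0$; more carefully, one wants strict positivity, which follows since if $1/a_1 + 1/a_2 - 1/x_1 \leqslant 0$ then $1/x_1 \geqslant 1/a_1 + 1/a_2 > 1/a_1$, contradicting $x_1 \geqslant a_1 + 1$ from Lemma~\ref{lp2}. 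With positivity secured, the reciprocal step is routine and the proof is complete. Everything else is a direct chain of the displayed inequalities, so the write-up should be short.
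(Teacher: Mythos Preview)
Your proof is correct and follows essentially the same approach as the paper: use maximality of $1/x_2'$ to obtain $1/x_2 \leqslant 1/x_2'$, combine with \eqref{ep8} to get $1/a_1 + 1/a_2 - 1/x_1 \leqslant 1/x_2'$, invoke Lemma~\ref{lp2} for positivity, and invert. The paper's write-up is more compact and omits your discussion of well-definedness of $x_2'$, but the mathematical content is identical.
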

\begin{proof}
By definition, $1/x_2'$ is the largest Egyptian fraction less than $\theta - 1/x_1$. Hence, 
$$\frac{1}{a_1} + \frac{1}{a_2} \ \leqslant\ \frac{1}{x_1} + \frac{1}{x_2} \ \leqslant\ \frac{1}{x_1} + \frac{1}{x_2'},$$
giving
$$\frac{1}{x_2'}\ \geqslant\ \frac{1}{a_1} + \frac{1}{a_2} - \frac{1}{x_1}.$$
By Lemma \ref{lp2}, 
$$\frac{1}{a_1} + \frac{1}{a_2} - \frac{1}{x_1} \ >\ 0.$$
We, therefore, obtain
$$x'_2\ \leqslant\ \left(\frac{1}{a_1} + \frac{1}{a_2} - \frac{1}{x_1}\right)^{-1}.$$
\end{proof}

We shall prove the first statement of Theorem \ref{mk0} by contradiction. We argue that for $\theta$ in our family, if \eqref{ep8} holds for some $(x_1, x_2)\neq (a_1, a_2)$, then \eqref{ep9} fails, contradicting Lemma \ref{lp3}. Again, we consider only the cases $\Upsilon(p,q) = 2$ and $\Upsilon(p,q) = 3$. The case $\Upsilon(p,q) = 1$ follows from Theorem \ref{Nathm}.

\subsection{When $\Upsilon(p, q) = 2$}
The next technical lemma proves a key inequality.

\begin{lem}\label{lp1}
Let $q \geqslant 4$, $u\geqslant 2$, $v\in \{1,2\}$, and $1\leqslant s\leqslant u-1$. If $(q+2)/u\in \mathbb{N}_{\geqslant 3}$, then 
\begin{equation}\label{ep3}\left\lfloor \frac{qu(u+s)}{s(q+2)+2u}\right\rfloor \ >\ \frac{(qu+v)u(u+s)}{squ+vs+2u(u+s)}-1.\end{equation}
\end{lem}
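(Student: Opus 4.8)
\textbf{Proof proposal for Lemma \ref{lp1}.}
The plan is to reduce the floor inequality \eqref{ep3} to a polynomial inequality and then dispatch it by direct estimation, using the hypothesis $(q+2)/u \in \mathbb{N}_{\geqslant 3}$ to control the size of $q$ relative to $u$. First I would write $t := (q+2)/u$, so that $q = tu - 2$ with $t \geqslant 3$ an integer, and rewrite both sides of \eqref{ep3} in terms of $t, u, s, v$. The left-hand denominator becomes $s(q+2) + 2u = su t + 2u = u(st+2)$, so the left side is $\lfloor q(u+s)/(st+2)\rfloor$; the right side, after clearing the common factor $u$ from numerator and denominator, is $(qu+v)(u+s)/(squ+vs+2u(u+s))-1 = (qu+v)(u+s)/(u(squ + 2u + 2s) + vs) - 1$. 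Since for any real $x$ one has $\lfloor x\rfloor > x - 1$, it suffices to prove the slightly stronger clean inequality obtained by dropping the floor, namely
$$\frac{q(u+s)}{st+2} \;\geqslant\; \frac{(qu+v)(u+s)}{squ + 2su/s\cdot\ldots}$$
— more precisely, it suffices to show $\dfrac{q(u+s)}{st+2} \geqslant \dfrac{(qu+v)(u+s)}{squ+vs+2u(u+s)}$, because then the left side of \eqref{ep3} is $> q(u+s)/(st+2) - 1 \geqslant \text{RHS}$. Cancelling the positive factor $(u+s)$, this is equivalent to the cross-multiplied polynomial inequality
$$q\bigl(squ + vs + 2u(u+s)\bigr) \;\geqslant\; (qu+v)(st+2),$$
which, after substituting $q = tu-2$ and expanding, reduces to a comparison between two polynomials in $t,u,s,v$ with $t\geqslant 3$, $u\geqslant 2$, $v\in\{1,2\}$, $1\leqslant s\leqslant u-1$.

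The key step is then to verify this polynomial inequality. Expanding the left side gives $q s q u + qvs + 2qu(u+s) = sq^2u + qvs + 2qu^2 + 2qus$, and the right side is $qust + 2qu + vst + 2v$. Using $qt = (tu-2)t = ut^2 - 2t$ (so $qust = su(ut^2-2t) = su^2t^2 - 2sut$ — actually it is cleaner to keep $qt$ grouped) and $q = tu-2$ throughout, I would collect everything onto one side and show the resulting expression is nonnegative. The dominant terms are $sq^2u$ on the left versus $qust$ on the right; since $q = tu-2 \geqslant 3u-2 \geqslant u$ (as $u\geqslant 2$), and since $s\geqslant 1$, we have $sq^2u \geqslant q u \cdot q \geqslant qu(tu-2) \geqslant qust$ once $s\cdot q \geqslant st$, i.e. $q\geqslant t$, which holds because $q = tu-2 \geqslant 2t - 2 \geqslant t$ for $t\geqslant 2$. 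This already absorbs the largest right-hand term, and the remaining left-hand terms $2qu^2 + 2qus + qvs$ comfortably dominate the leftover $2qu + vst + 2v$: indeed $2qu^2 \geqslant 2qu$ since $u\geqslant 1$, $qvs \geqslant vs \geqslant$ a fraction of $vst$ needs $q\geqslant t$ (true), and $2v \leqslant 2qus$ trivially. I expect the arithmetic to close with substantial slack for all parameters in range, so no boundary case analysis should be needed; if a genuinely tight corner appears (most likely $u=2$, $s=1$, $t=3$, $v=2$, i.e. $q=4$), I would just check it by hand.

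The main obstacle is purely bookkeeping: organizing the expansion so that the term-by-term domination argument is transparent rather than a morass of monomials. A secondary subtlety is that I have replaced \eqref{ep3} by the strict-floor-free inequality $q(u+s)/(st+2)\geqslant \text{RHS}$, which is legitimate since $\lfloor x\rfloor > x-1$ always; but I should double-check that this does not lose the strictness that \eqref{ep3} demands — it does not, because the $-1$ on the right of \eqref{ep3} provides the needed strict gap. One should also confirm that $st+2 > 0$ and $squ+vs+2u(u+s) > 0$ so the cross-multiplication preserves the inequality direction; both are manifest from positivity of all the quantities involved. Finally, I would record that the hypothesis $(q+2)/u \geqslant 3$ (not merely $\geqslant 1$) is what guarantees $t\geqslant 3$, which is exactly the margin that makes $q = tu-2 \geqslant u$ and hence powers the domination; with $t=1$ or $t=2$ the inequality could fail, so this hypothesis is genuinely used.
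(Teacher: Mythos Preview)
Your approach has a fatal error that makes the whole strategy collapse. When you ``clear the common factor $u$'' from the right-hand side, you write
\[
y \;=\; \frac{(qu+v)(u+s)}{squ+vs+2u(u+s)},
\]
but the correct expression is
\[
y \;=\; \frac{(qu+v)\,u\,(u+s)}{squ+vs+2u(u+s)};
\]
the denominator has the term $vs$, which is not divisible by $u$, so there is no common factor to clear. Your cross-multiplied inequality $q(squ+vs+2u(u+s))\geqslant (qu+v)(st+2)$ is therefore missing a factor of $u$ on the right. With the correct expressions one computes directly, writing $D_x=s(q+2)+2u$ and $D_y=squ+vs+2u(u+s)$, that
\[
y-x \;=\; \frac{u(u+s)\bigl[(qu+v)D_x-qD_y\bigr]}{D_xD_y} \;=\; \frac{2vu(u+s)^2}{D_xD_y} \;>\;0,
\]
so in fact $y>x$ for \emph{every} admissible choice of parameters. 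Consequently the chain $\lfloor x\rfloor > x-1 \geqslant y-1$ is impossible, and the floor-free reduction cannot work. (Concretely, at $u=2$, $s=1$, $v=2$, $q=4$ one has $x=12/5=2.4$ and $y=30/11\approx 2.727$.)

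The inequality \eqref{ep3} is thus genuinely about the fractional part of $x$: it is equivalent to $y<\lfloor x\rfloor+1$, i.e.\ to $\{x\}<1-(y-x)$, which is a Diophantine condition that cannot be obtained by crude size estimates. The paper's proof handles this by writing $q=ku-2$, performing the Euclidean division $ku^2-4u-2s=(ks+2)\ell+j$ to compute $\lfloor x\rfloor$ exactly, and then analysing the sign of $L-R$ case by case on the remainder $j$; the case $j=ks+1$, $v=2$ is borderline and requires bounding $u,s,k$ into a finite box followed by a brute-force check. Some argument of this type---one that actually engages with the floor rather than discarding it---is unavoidable here.
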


\begin{proof}
Fix $u\geqslant 2$. The smallest $q$ such that $(q+2)/u\in \mathbb{N}_{\geqslant 3}$ is equal to $3u-2$. We need to confirm \eqref{ep3} for all $q$ in the set 
$\left\{ku-2\,:\, k\geqslant 3\right\}$. Choose $k\geqslant 3$ and replace $q = ku-2$ in \eqref{ep3}. The left side of \eqref{ep3} becomes 
$$L\ :=\ \left\lfloor \frac{(ku-2)u(u+s)}{kus+2u}\right\rfloor\ =\ \left\lfloor u+\frac{ku^2-4u-2s}{ks+2}\right\rfloor.$$
The right side of \eqref{ep3} becomes
$$R\ := \ u + \frac{ku^4-4u^3-2u^2s+vu^2}{ku^2s+vs+2u^2}-1\ =\ u +\frac{ku^2-4u-2s+v}{ks+2+vs/u^2}-1.$$
Observe that $ku^2-4u-2s > 0$ because 
$$ku^2\ \geqslant\ 3u^2\ \geqslant\ 6u\ >\ 6u-2 \ =\ 4u+2(u-1)\ \geqslant\ 4u+2s.$$
To establish $L > R$, we first get rid of the floor function by writing 
\begin{equation}\label{ep5}ku^2-4u-2s \ =\ (ks+2)\ell+j,\end{equation}
for some $\ell\geqslant 0$ and $0\leqslant j\leqslant ks+1$. 
Then $$L \ =\  u + \ell$$
and 
$$R\ =\ u+\ell + \frac{j+v-vs\ell/u^2}{ks+2+vs/u^2}-1.$$
Hence,
\begin{equation}\label{ep2}L- R\ =\ \frac{(ks-j)u^2+(2-v)u^2+vs+vs\ell}{(ks+2)u^2+vs}.\end{equation}
Our goal is to show that the right side of \eqref{ep2} is positive. We proceed by case analysis.

If $0\leqslant j\leqslant ks$, then $L-R > 0$. 

If $j = ks+1$, then from \eqref{ep2},
$$L - R\ =\ \frac{(1-v)u^2+vs+vs\ell}{(ks+2)u^2+vs}.$$ 
If $v = 1$, then $L-R > 0$. For the rest of the proof, assume $v = 2$. 
We want to show that $2s(\ell+1) > u^2$. Suppose, for a contradiction, that $2s(\ell+1) \leqslant u^2$. We shall establish finite ranges for $u, s,$ and $k$ then resort to a brute-force check to obtain a contradiction. 

By \eqref{ep5}, 
\begin{align*}
2ku^2&\ =\ 2(ks+2)\ell + 2ks + 2 + 8u + 4s\\
&\ \leqslant\ k(u^2-2s) + 4\ell + 2ks+ 2 + 8u+4(u-1)\\
&\ \leqslant\ ku^2+2u^2/s+12u-6\ \leqslant\ (2+k)u^2 + 12u-6.
\end{align*}
Hence, 
\begin{equation}\label{ep7} u^2\ \leqslant\ (k-2)u^2\ \leqslant\ 12u-6,\end{equation}
giving $2\leqslant u\leqslant 11$. 

Note that \eqref{ep5} and $j = ks+1$ imply that $k$ must be odd. If $k \geqslant 7$, then \eqref{ep7} gives
$5u^2 \leqslant 12u-6$, which contradicts that $u\geqslant 2$. Hence, either $k = 3$ or $k = 5$. 

Case 1: $k = 3$. By \eqref{ep5}, 
$$3u^2 \ =\ (3s+2)\ell + 5s + 4u+1.$$
For each choice of $2\leqslant u\leqslant 11$, we have $1\leqslant s\leqslant u-1$. A brute-force check shows that only $(u,s) = (7,4)$ gives an integer value for $\ell$. In that case, $\ell = 7$. However, these values contradict our supposition $2s(\ell+1)\leqslant u^2$.

Case 2: $k = 5$. By \eqref{ep5}, 
$$5u^2\ =\ (5s+2)\ell+7s+4u+1.$$
For each choice of $2\leqslant u\leqslant 11$ and $1\leqslant s\leqslant u-1$, a brute-force check shows that only $(u,s) \in \{(4,1), (8,1), (11,1), (11,8)\}$ gives integer values for $\ell$. However, all these choices contradict $2s(\ell+1)\leqslant u^2$. 

We conclude that $L - R > 0$.
\end{proof}

\begin{thm}\label{mk01}
Let $p, q\in \mathbb{N}$ with $p < q$ and $\Upsilon(p,q) = 2$. Then $1/a_1 + 1/a_2$ is the unique best two-term underapproximation of $p/q$. 
\end{thm}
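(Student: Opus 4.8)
The plan is to argue by contradiction, reducing everything to the Diophantine inequality already established in Lemma~\ref{lp1}. First I would treat the case $p=2$ separately: here $\Upsilon(2,q)=2$ forces $q$ to be even, so $p/q=1/(q/2)$ with $q/2\geq 2$ (since $q>p=2$ and $q$ even give $q\geq 4$), and $1\mid (q/2)+1$, so Theorem~\ref{Nathm} --- applied with numerator $1$ and denominator $q/2$ --- already shows that $\mathcal{G}$ gives the unique best $m$-term underapproximation for all $m$, in particular for $m=2$. From now on I would assume $p\geq 3$, hence $q\geq p+1\geq 4$.

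Next I would record explicit formulas for $a_1$ and $a_2$. Since $\Upsilon(p,q)=2$ gives $p\mid q+2$ and, because $0<2/p<1$, one has $\lfloor q/p\rfloor=(q+2)/p-1$, so $a_1=G(p/q)=\lfloor q/p\rfloor+1=(q+2)/p$. Then $e_1:=p/q-1/a_1=p/q-p/(q+2)=2/(qa_1)$, whence $a_2=G(e_1)=\lfloor qa_1/2\rfloor+1=(qa_1+v)/2$, where $v\in\{1,2\}$ equals $1$ when $qa_1$ is odd and $2$ when $qa_1$ is even. (These formulas are also consequences of Theorem~\ref{mk4} and Proposition~\ref{pl1}, but the direct check is quicker.)

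For the contradiction itself, suppose \eqref{ep8} holds for some $(x_1,x_2)\neq(a_1,a_2)$. By Lemma~\ref{lp2}, $a_1+1\leq x_1\leq 2a_1-1$; write $u:=a_1$ and $s:=x_1-u$, so that $1\leq s\leq u-1$. Using $p=(q+2)/u$, a short computation gives
$$\theta-\frac{1}{x_1}\ =\ \frac{s(q+2)+2u}{qu(u+s)},\qquad\text{hence}\qquad x_2':=G\!\left(\theta-\frac{1}{x_1}\right)\ =\ \left\lfloor\frac{qu(u+s)}{s(q+2)+2u}\right\rfloor+1,$$
and a second short computation gives
$$\left(\frac{1}{a_1}+\frac{1}{a_2}-\frac{1}{x_1}\right)^{-1}\ =\ \frac{(qu+v)\,u(u+s)}{squ+vs+2u(u+s)}.$$
All hypotheses of Lemma~\ref{lp1} are now in force ($q\geq 4$, $u\geq 2$, $v\in\{1,2\}$, $1\leq s\leq u-1$, and $(q+2)/u=p\in\mathbb{N}_{\geq 3}$), so that lemma gives precisely
$$\left\lfloor\frac{qu(u+s)}{s(q+2)+2u}\right\rfloor\ >\ \frac{(qu+v)\,u(u+s)}{squ+vs+2u(u+s)}-1,$$
i.e.\ $x_2'-1>\left(\frac{1}{a_1}+\frac{1}{a_2}-\frac{1}{x_1}\right)^{-1}-1$, and therefore $x_2'>\left(\frac{1}{a_1}+\frac{1}{a_2}-\frac{1}{x_1}\right)^{-1}$. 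This contradicts Lemma~\ref{lp3}. Hence no $(x_1,x_2)\neq(a_1,a_2)$ can satisfy \eqref{ep8}, which is exactly the claim that $1/a_1+1/a_2$ is the unique best two-term underapproximation of $p/q$.

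Essentially all of the analytic difficulty is already encapsulated in Lemma~\ref{lp1}; what remains is bookkeeping, and the steps I expect to require the most care are the $p=2$ reduction and, in the main argument, correctly matching the parameter $v$ to the parity of $qa_1$ and verifying that every hypothesis of Lemma~\ref{lp1} holds for all admissible $s$ --- an off-by-one in the range of $s$ or a misassignment of $v$ would derail the contradiction.
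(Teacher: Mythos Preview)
Your proposal is correct and follows essentially the same route as the paper's proof: dispose of $p=2$ via Theorem~\ref{Nathm}, compute $a_1=(q+2)/p$ and $a_2=(qa_1+v)/2$, then assume a rival pair $(x_1,x_2)$ exists, invoke Lemmas~\ref{lp2} and~\ref{lp3}, and feed the resulting inequality into Lemma~\ref{lp1} to obtain a contradiction. Your parametrization $v\in\{1,2\}$ via the parity of $qa_1$ agrees with the paper's parametrization via the parity of $q$ (since $q$ odd forces $q+2$ odd, hence $a_1=(q+2)/p$ odd), so the two arguments are identical in substance.
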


\begin{proof}
Note that $\Upsilon(p,q) = 2$ implies $p\geqslant 2$. If $q$ is odd and $p = 2$, then we have a contradiction to $p\mid (q+2)$. If $q$ is even and $p = 2$, then $p/q$ has the form $1/n$ for some $n\in \mathbb{N}$. We are done by Theorem \ref{Nathm}. For the rest of the proof, suppose that $p\geqslant 3$.
We can easily compute that $a_1 = (q+2)/p$ and 
$$a_2 \ =\ G\left(\frac{p}{q} - \frac{1}{a_1}\right)\ =\ G\left(\frac{p}{q} - \frac{p}{q+2}\right)\ =\ G\left(\frac{2p}{q(q+2)}\right)\ =\ \frac{q(q+2)}{2p} + \frac{v}{2},$$
where 
$$v \ =\ \begin{cases} 1 &\mbox{ if }2\nmid q,\\ 2 &\mbox{ if }2\mid q.\end{cases}$$
Suppose that there are integers $2\leqslant x_1\leqslant x_2$ such that $(x_1, x_2)\neq (a_1, a_2)$ and \eqref{ep8} holds with $\theta = p/q$. By Lemma \ref{lp3}, 
\begin{equation}\label{ep10}G\left(\frac{p}{q} - \frac{1}{x_1}\right)\ \leqslant\ \left(\frac{p}{q+2} + \frac{2p}{q(q+2)+vp}-\frac{1}{x_1}\right)^{-1}.\end{equation}
By Lemma \ref{lp2}, $a_1+1\leqslant x_1\leqslant 2a_1-1$. Hence, we can write $x_1 = a_1 + s$ for some $1\leqslant s\leqslant a_1-1$.
Inequality \eqref{ep10} becomes
$$G\left(\frac{p}{q} - \frac{1}{a_1+s}\right)\ \leqslant\ \left(\frac{p}{q+2} + \frac{2p}{q(q+2)+vp}-\frac{1}{a_1+s}\right)^{-1}.$$
Replacing $p$ by $(q+2)/a_1$, we have
\begin{equation}\label{ep11}\left\lfloor \frac{qa_1(a_1+s)}{2a_1+2s+qs}\right\rfloor\ \leqslant\ \frac{a_1(a_1+s)(qa_1+v)}{sqa_1+sv+2a_1(a_1+s)}-1.\end{equation}

Let us check that we can apply Lemma \ref{lp1}. 
Since $q > p\geqslant 3$, we know that $q\geqslant 4$. Furthermore, $a_1\geqslant 2$ by the definition of $G$. Finally, $(q+2)/a_1 = p\geqslant 3$. Therefore, Lemma \ref{lp1} states that \eqref{ep11} cannot hold. This contradiction completes our proof.
\end{proof}

\subsection{When $\Upsilon(p,q) = 3$}
We need an analog of Lemma \ref{lp1} for the case $\Upsilon(p,q) = 3$. The proof of the next lemma is similar, albeit more technically involved, to the proof of Lemma \ref{lp1}. We include the proof in the appendix for interested readers. 

\begin{lem}\label{lp11}
Let $q \geqslant 5$, $u\geqslant 2$, $v\in \{1,2,3\}$, and $1\leqslant s\leqslant u-1$. If $(q+3)/u\in \mathbb{N}_{\geqslant 4}$ and $(q, u)\notin \{(17,2), (61,8)\}$, then 
\begin{equation}\label{ep30}\left\lfloor \frac{qu(u+s)}{s(q+3)+3u}\right\rfloor \ >\ \frac{(qu+v)u(u+s)}{squ+vs+3u(u+s)}-1.\end{equation}
\end{lem}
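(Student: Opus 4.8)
\textbf{Proof strategy for Lemma \ref{lp11}.}

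The plan is to mirror the proof of Lemma \ref{lp1}, the difference being that the hypothesis $(q+3)/u\in\mathbb{N}_{\geqslant 4}$ forces us to write $q = ku-3$ with $k\geqslant 4$ rather than $q = ku-2$ with $k\geqslant 3$, and that the parameter $v$ now ranges over $\{1,2,3\}$. First I would substitute $q = ku-3$ into both sides of \eqref{ep30}. Dividing numerator and denominator appropriately, the left side becomes $L = \lfloor u + (ku^2 - \text{(lower order)})/(ks+3)\rfloor$ and the right side becomes $R = u + (ku^2 - \text{(lower order)} + \varepsilon)/(ks + 3 + vs/u^2) - 1$ for a small correction $\varepsilon$ depending on $v$. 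As in Lemma \ref{lp1}, I would first check the positivity of the relevant numerator, $ku^2 - \text{(lower order)} > 0$, using $k\geqslant 4$, $u\geqslant 2$, and $s\leqslant u-1$; this is a short direct estimate. Then I would remove the floor by the division algorithm, writing $ku^2 - (\text{lower order}) = (ks+3)\ell + j$ with $0\leqslant j\leqslant ks+2$, obtaining $L = u+\ell$ and an explicit rational expression for $L-R$ whose sign is what we must determine.

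The case analysis then splits on $j$. For $0\leqslant j\leqslant ks$ (or so — the precise threshold must be read off from the $L-R$ formula), the numerator of $L-R$ is manifestly positive and we are done. The remaining values, $j\in\{ks+1, ks+2\}$, are the delicate ones; there the sign of $L-R$ reduces to an inequality of the shape $cs(\ell+1) > u^2$ for a constant $c\in\{1,2,3\}$ tied to $v$ (the $v=1$ subcase should again be immediate). Assuming this fails, i.e. $cs(\ell+1)\leqslant u^2$, I would feed that bound back into the division identity $ku^2 - (\text{lower order}) = (ks+3)\ell + j$ to derive $(k-c')u^2 \leqslant (\text{linear in } u)$ for some explicit $c'$, which bounds $u$ in a finite range and simultaneously bounds $k$. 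The congruence condition coming from $j\in\{ks+1,ks+2\}$ (the analog of "$k$ must be odd" in Lemma \ref{lp1}) restricts $k$ further, leaving a small finite set of pairs $(k,u)$, for each of which $1\leqslant s\leqslant u-1$ is also finite. A brute-force check over these finitely many triples $(k,u,s)$ then shows that either $\ell$ is not an integer, or the resulting values violate $cs(\ell+1)\leqslant u^2$ — \emph{except} that here, unlike in Lemma \ref{lp1}, two genuine exceptions survive, corresponding exactly to $(q,u)=(17,2)$ and $(q,u)=(61,8)$, which is why they are excluded in the hypothesis. I would verify that these are the only survivors and that they indeed correspond to failures of \eqref{ep30}.

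The main obstacle is bookkeeping: with $v\in\{1,2,3\}$ and two residues $j\in\{ks+1,ks+2\}$ to handle, there are more subcases than in Lemma \ref{lp1}, and the linear-in-$u$ bounds produced are looser, so the finite search ranges for $u$ and $k$ are larger; care is needed to keep the constants $c, c'$ and the modular constraint on $k$ correct in each branch so that the brute-force step is actually finite and correctly identifies $(17,2)$ and $(61,8)$ as the sole exceptions. Since the computation is lengthy and essentially routine once the structure is in place, I would relegate the full details to the appendix, as the paper indicates, and in the main text only record the statement and the fact that the two exceptional pairs are the reason for the side condition $(q,u)\notin\{(17,2),(61,8)\}$ — noting that $(q,u)=(17,2)$ is precisely the source of the $p/q = 10/17$ exception in Theorem \ref{mk0}.
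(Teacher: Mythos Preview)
Your overall strategy matches the paper's almost exactly: substitute $q=ku-3$ with $k\geqslant 4$, rewrite $L$ and $R$, verify $ku^2-6u-3s>0$, apply the division algorithm to get $ku^2-6u-3s=(ks+3)\ell+j$, and split on $j\in\{0,\dots,ks\}$ (immediate), $j=ks+1$, and $j=ks+2$, reducing the last two to inequalities of the form $vs(\ell+1)>(v-3+j-ks)u^2$ and then bounding $u,k$ by contradiction. That part is fine.

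The gap is in your expectation that every subcase yields a \emph{finite} brute-force range. In the branch $j=ks+2$, $v=3$, one needs $3s(\ell+1)>2u^2$; assuming failure and feeding back into the division identity gives $ku^2\leqslant 6u^2/s+27u-12$. For $s\geqslant 2$ this is $(k-3)u^2\leqslant 27u-12$, which does bound $u$ and $k$. But for $s=1$ it becomes $(k-6)u^2\leqslant 27u-12$, which is \emph{vacuous} for $k\in\{4,5,6\}$ --- no bound on $u$ at all. So the direct transplant of the Lemma~\ref{lp1} argument breaks down precisely at $(s,v)=(1,3)$. The paper deals with this by isolating the case $(s,v)=(1,3)$ into a separate auxiliary lemma (Lemma~\ref{lp50}), which rewrites $L$ and $R$ differently (exploiting $s=1$ from the outset: $L=\lfloor u(u+1)-3(u+1)^2/(k+3)\rfloor$) and runs its own, tighter case analysis. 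It is in \emph{that} auxiliary lemma, not in the main case split, that the exception $(q,u)=(61,8)$ actually surfaces (at $k=8$, $j=1$); the main argument only rediscovers $(q,u)=(17,2)$ in the $v=2$, $j=ks+2$ branch. So your claim that both exceptional pairs fall out of the main brute-force is not quite right, and you would need to add the analogue of Lemma~\ref{lp50} to close the argument.
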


The next easy lemma will also be used in due course. 

\begin{lem}\label{lp12}
For $s\geqslant 1$ and $s\neq 155$, it holds that
\begin{equation}\label{ep63}\left\lfloor \frac{61(8+s)}{8s+3}\right\rfloor\ >\ \frac{3912(8+s)}{513s+192}-1.\end{equation}
\end{lem}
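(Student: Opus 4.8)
The plan is to reduce \eqref{ep63} to a statement about the fractional part of $61(8+s)/(8s+3)$ by an exact algebraic identity, and then to treat small and large $s$ separately with elementary estimates. There is no deep obstacle here; the only mildly delicate point is a divisibility observation, and the bulk of the work is locating the threshold $s=155$ and organizing the two regimes.

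\textbf{Step 1 (an exact identity).} A direct computation with a common denominator, using $3912(8s+3)-61(513s+192)=3(s+8)$, gives
\[
\frac{3912(8+s)}{513s+192}\ =\ \frac{61(8+s)}{8s+3}\ +\ \frac{3(8+s)^2}{(513s+192)(8s+3)}.
\]
Consequently, \eqref{ep63} is equivalent to
\[
1-\left\{\frac{61(8+s)}{8s+3}\right\}\ >\ \frac{3(8+s)^2}{(513s+192)(8s+3)},
\]
where $\{\cdot\}$ denotes the fractional part. I would work entirely with this reformulated inequality.

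\textbf{Step 2 (small $s$: $1\leqslant s\leqslant 154$).} Here I would bound the left-hand side below by $1/(8s+3)$. Writing $61(8+s)=61s+488$ and multiplying by $8$, which is coprime to $8s+3$, one gets $8(61s+488)\equiv 61^{2}\pmod{8s+3}$; since the only divisors of $61^{2}$ that exceed $1$ are $61$ and $3721$, and neither $8s+3=61$ nor $8s+3=3721$ has an integer solution, $8s+3\nmid 61(8+s)$ for every $s\geqslant 1$. Hence the fractional part equals $r/(8s+3)$ with $1\leqslant r\leqslant 8s+2$, so $1-\{61(8+s)/(8s+3)\}\geqslant 1/(8s+3)$. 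It then suffices to check $513s+192>3(8+s)^{2}$, which simplifies to $465s>3s^{2}$, i.e. $s<155$; this disposes of $1\leqslant s\leqslant 154$.

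\textbf{Step 3 (large $s$: $s\geqslant 156$, and the value $s=155$).} For large $s$ I would instead evaluate the floor exactly: polynomial division gives $61(8+s)/(8s+3)=7+(5s+467)/(8s+3)$, and $0<(5s+467)/(8s+3)<1$ exactly when $3s>464$, i.e. $s\geqslant 155$, so $\lfloor 61(8+s)/(8s+3)\rfloor=7$ for all $s\geqslant 155$. For such $s$, \eqref{ep63} becomes $8>3912(8+s)/(513s+192)$, i.e. $192s>29760$, i.e. $s>155$, which holds for $s\geqslant 156$. To finish, I would note that at $s=155$ the two sides in the last display of Step 1 are \emph{equal} (indeed $1-\{9943/1243\}=1/1243=3\cdot 163^{2}/(79707\cdot 1243)$ since $513\cdot 155+192=79707=3\cdot 163^{2}$), so \eqref{ep63} fails with equality precisely there, which accounts for the exclusion $s\neq 155$. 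Combining Steps 2 and 3 proves the lemma; the only point needing care is the divisibility claim of Step 2, and no brute-force range check is required.
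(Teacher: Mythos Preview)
Your proof is correct and is genuinely different from the paper's. The paper rewrites the left side as $\lfloor 8-(3s-464)/(8s+3)\rfloor\geqslant 7$ and the right side as $7-(192s-29760)/(513s+192)$, which immediately dispatches $s\geqslant 156$, and then simply invokes a computer check for $1\leqslant s\leqslant 154$. You instead derive the exact identity in Step~1 to reduce the problem to $1-\{61(8+s)/(8s+3)\}>3(8+s)^{2}/((513s+192)(8s+3))$, and for small $s$ use the nice divisibility observation $8\cdot 61(8+s)\equiv 61^{2}\pmod{8s+3}$ (with $8s+3$ never a divisor of $61^{2}$) to get the uniform lower bound $1-\{\cdot\}\geqslant 1/(8s+3)$; the resulting inequality $513s+192>3(8+s)^{2}$ is exactly $s<155$, which handles $1\leqslant s\leqslant 154$ with no brute force. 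Your Step~3 coincides with the paper's large-$s$ argument (both amount to $\lfloor\cdot\rfloor=7$ and $192s>29760$), and your explicit identification of equality at $s=155$ via $513\cdot 155+192=3\cdot 163^{2}$ is a pleasant bonus. The upshot: the paper's approach is shorter to state but leans on a finite check, while yours is self-contained and explains structurally why the threshold is precisely $155$.
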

\begin{proof}
The left side of \eqref{ep63} can be written as
$$\left\lfloor 8-\frac{3s-464}{8s+3}\right\rfloor,$$
while the right side is equal to 
$$7-\frac{192s-29760}{513s+192}.$$
Since $(3s-464)/(8s+3) < 1$, the left side is at least $7$. For $s \geqslant 156$, the right side is strictly smaller than $7$; hence, the inequality holds for all $s\geqslant 156$. A quick computer check proves the inequality for $s\leqslant 154$. 
\end{proof}

\begin{thm}\label{lc}
Let $p, q\in \mathbb{N}$ with $p < q$, $\Upsilon(p,q) = 3$, and $p/q\neq 10/17$. Then $1/a_1 + 1/a_2$ is the unique best two-term underapproximation of $p/q$. 
\end{thm}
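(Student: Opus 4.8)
The plan is to mirror the structure of the proof of Theorem \ref{mk01} (the $\Upsilon(p,q)=2$ case), now using Lemma \ref{lp11} in place of Lemma \ref{lp1}, and then to clean up the finitely many exceptional cases $(q,u)\in\{(17,2),(61,8)\}$ that Lemma \ref{lp11} excludes. First I would dispose of small numerators: $\Upsilon(p,q)=3$ forces $3\mid(q+3)$, hence $3\mid q$, and $p\mid(q+3)$ with $p\notin\{1,2\}$ (if $p=1$ then $p/q$ is already Egyptian, handled by Theorem \ref{Nathm}; $p=2$ with $2\mid(q+3)$ would give $q$ odd, but then we'd also need to rule out $p/q=1/n$, etc. — in any case $p=2$ reduces to an already-settled situation). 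So assume $p\geqslant 4$ (note $p=3$ is impossible since $3\mid(q+3)$ and $p<q$ would force... actually $p=3$ gives $\Upsilon=3$ only if $3\nmid(q+1),(q+2)$, i.e. $3\mid q$, consistent — so I should keep $p=3$ in mind or note $3\mid q$ makes $p=3$ divide $q$, a case covered by Theorem \ref{mk4}). Then compute $a_1=(q+3)/p$ and
$$a_2\ =\ G\!\left(\frac{p}{q}-\frac{1}{a_1}\right)\ =\ G\!\left(\frac{3p}{q(q+3)}\right)\ =\ \frac{q(q+3)}{3p}+\frac{v'}{3},$$
where $v'=v'(q)\in\{1,2,3\}$ is the residue making this an integer (i.e. $v'\equiv -q(q+3)/3\cdot p^{-1}$ appropriately; the key point is $v'\in\{1,2,3\}$). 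Writing $v=v'$, the remainder term denominator is $q(q+3)+vp$.

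Next, suppose for contradiction that \eqref{ep8} holds with $\theta=p/q$ for some $(x_1,x_2)\neq(a_1,a_2)$. Lemma \ref{lp2} gives $a_1+1\leqslant x_1\leqslant 2a_1-1$, so write $x_1=a_1+s$ with $1\leqslant s\leqslant a_1-1$. Lemma \ref{lp3} then yields
$$G\!\left(\frac{p}{q}-\frac{1}{a_1+s}\right)\ \leqslant\ \left(\frac{p}{q+3}+\frac{3p}{q(q+3)+vp}-\frac{1}{a_1+s}\right)^{-1}.$$
Substituting $p=(q+3)/a_1$ and simplifying exactly as in the proof of Theorem \ref{mk01}, this becomes
\begin{equation}\label{ep111}
\left\lfloor \frac{qa_1(a_1+s)}{3a_1+3s+qs}\right\rfloor\ \leqslant\ \frac{a_1(a_1+s)(qa_1+v)}{sqa_1+sv+3a_1(a_1+s)}-1,
\end{equation}
with $u=a_1\geqslant 2$. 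Since $q>p\geqslant 3$ and $3\mid q$ we get $q\geqslant 6\geqslant 5$, and $(q+3)/u=p\geqslant 4$ (having excluded the small-$p$ reductions and the case $p\mid q$). Provided $(q,u)\notin\{(17,2),(61,8)\}$, Lemma \ref{lp11} asserts the strict reverse of \eqref{ep111}, a contradiction — so $\mathcal{G}$ gives the unique best two-term underapproximation in all those cases.

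It remains to handle the two excluded pairs $(q,u)=(q,a_1)\in\{(17,2),(61,8)\}$. The pair $(17,2)$ corresponds to $q=17$, $a_1=2$, i.e. $p=(q+3)/a_1=10$, which is exactly $p/q=10/17$ — the case explicitly excluded from Theorem \ref{lc} (and treated separately in Theorem \ref{mk0} via \cite[Theorem 6]{Na23}), so nothing to prove. The pair $(61,8)$ gives $q=61$, $a_1=8$, $p=(q+3)/8=8$; here $q=61$ is not divisible by $3$ unless — wait, I must double-check $\Upsilon(8,61)$: $61+1=62$, $61+2=63$, and $8\mid 64=61+3$, so indeed $\Upsilon(8,61)=3$, and $3\nmid 61$, so this is a genuine case, not absorbed by $p\mid q$. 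For this single $p/q=8/61$ I would instead invoke Lemma \ref{lp12}: one computes $v$ for $q=61$ (here $q(q+3)=61\cdot 64=3904$, $3904/3$ is not an integer, $3904+8=3912=3\cdot1304$, so $v=8\equiv 2\pmod 3$... actually need $v\in\{1,2,3\}$ with $3\mid(3904+vp)=3904+8v$; $3904\equiv1$, $8v\equiv2v\pmod 3$, so $2v\equiv2$, $v\equiv1$, thus $v=1$ — hmm, let me not commit; the point is $v$ is determined and \eqref{ep111} with $q=61,u=8$ reduces, after inserting that $v$, to exactly inequality \eqref{ep63} of Lemma \ref{lp12} up to the one excluded value $s=155$). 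Since $1\leqslant s\leqslant a_1-1=7<155$, Lemma \ref{lp12} applies and gives the strict reverse of \eqref{ep111}, the desired contradiction. This finishes the proof.

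The main obstacle I expect is bookkeeping rather than conceptual: (a) correctly pinning down $v=v'(q)\in\{1,2,3\}$ and verifying that the algebraic reduction of the Lemma \ref{lp3} inequality genuinely produces \eqref{ep111} (and, for $q=61$, specializes to \eqref{ep63}); and (b) making sure every ``small'' case — $p\in\{1,2\}$, $p=3$, and $p\mid q$ (covered by Theorem \ref{mk4}) — is either trivially Egyptian, already in the literature, or reduces cleanly, so that in the generic branch we legitimately have $p\geqslant 4$, $q\geqslant 5$, and $(q+3)/u\in\mathbb{N}_{\geqslant 4}$ as Lemma \ref{lp11} requires. The inequality engine itself (Lemmas \ref{lp11} and \ref{lp12}) is assumed, so no further analysis is needed there.
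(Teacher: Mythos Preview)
Your approach is essentially identical to the paper's: reduce to $p\geqslant 4$, compute $a_1,a_2$, assume a counterexample, apply Lemmas \ref{lp2} and \ref{lp3} to obtain the analogue of \eqref{ep11} with $3$'s in place of $2$'s, then invoke Lemma \ref{lp11} for the generic case and Lemma \ref{lp12} (with $s\leqslant 7<155$) for $(q,u)=(61,8)$, while $(q,u)=(17,2)$ is exactly the excluded $p/q=10/17$. The core argument is correct.

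However, your bookkeeping for the small cases contains errors. The claim ``$\Upsilon(p,q)=3$ forces $3\mid(q+3)$, hence $3\mid q$'' is false in general: $\Upsilon(p,q)=3$ means $p\mid(q+3)$, not $3\mid(q+3)$; for instance $\Upsilon(8,61)=3$ but $3\nmid 61$, as you yourself later notice. The implication $3\mid q$ holds \emph{only} when $p=3$, and that is precisely how the paper disposes of $p=3$: then $p/q=1/n$ and Theorem \ref{Nathm} (not Theorem \ref{mk4}, which gives formulas but says nothing about best underapproximation) applies. The cases $p\in\{1,2\}$ are vacuous since $\Upsilon(p,q)=3$ already forces $p\geqslant 3$. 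Fortunately none of this damages the main branch: once $p\geqslant 4$, the hypothesis $q\geqslant 5$ of Lemma \ref{lp11} follows directly from $q>p\geqslant 4$, so your spurious use of $3\mid q$ is unnecessary. Clean up these reductions and the proof is complete.
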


\begin{proof}
We shall use the same argument as in the proof of Theorem \ref{mk01}, where we assume that $1/a_1+1/a_2$ is not the unique best two-term underapproximation to establish an upper bound for $G(p/q-1/(a_1+s))$ for $1\leqslant s\leqslant a_1-1$. We then use Lemma \ref{lp11} to obtain a contradiction. Here the role of Lemma \ref{lp11} in the proof of Theorem \ref{lc} is the same as the role of Lemma \ref{lp1} in the proof of Theorem \ref{mk01}.

Since $\Upsilon(p,q) = 3$, we know that $p\geqslant 3$. If $p = 3$, then $\Upsilon(p,q) = 3$ implies that $3\mid q$. Then $p/q$ has the form $1/n$, and we are done using Theorem \ref{Nathm}. For the rest of the proof, assume that $p\geqslant 4$. It is easy to verify that $a_1 = (q+3)/p$ and 
$$a_2 \ =\ \left\lfloor \left(\frac{p}{q}-\frac{p}{q+3}\right)^{-1}\right\rfloor + 1 \ =\ \left\lfloor \frac{q(q+3)}{3p}\right\rfloor+1\ =\ \frac{q(q+3)}{3p}+\frac{v}{3},$$
for some $v\in \{1,2,3\}$. Suppose that there are integers $2\leqslant x_1\leqslant x_2$ such that $(x_1, x_2)\neq (a_1, a_2)$ and \eqref{ep8} holds. By Lemma \ref{lp3}, 
\begin{equation}\label{ep61}G\left(\frac{p}{q} - \frac{1}{x_1}\right)\ \leqslant\ \left(\frac{p}{q+3} + \frac{3p}{q(q+3)+vp}-\frac{1}{x_1}\right)^{-1}.\end{equation}
By Lemma \ref{lp2}, $(q+3)/p+1\leqslant x_1\leqslant 2(q+3)/p-1$. Hence, we can write 
$$x_1 \ =\ (q+3)/p+ s,$$
for some $1\leqslant s\leqslant (q+3)/p-1$. Replacing $x_1$ by $(q+3)/p+s$ in \eqref{ep61} gives
$$\left\lfloor\frac{q(ps+q+3)}{3p+p^2s}\right\rfloor \ \leqslant\ \frac{\frac{q+3}{p}\left(\frac{q+3}{p}+s\right)\left(q\frac{q+3}{p}+v\right)}{qs\frac{q+3}{p}+sv+3\frac{q+3}{p}\left(\frac{q+3}{p}+s\right)}-1.$$
Replacing $(q+3)/p$ by $a_1$, we obtain
\begin{equation}\label{ep65}\left\lfloor\frac{qa_1(a_1+s)}{s(q+3)+3a_1}\right\rfloor\ \leqslant\ \frac{a_1(a_1+s)(qa_1+v)}{sqa_1+sv+3a_1(a_1+s)}-1.\end{equation}
If $(p,q) = (8,61)$, then $a_1 = 8, v= 1$, and \eqref{ep65} cannot hold due to Lemma \ref{lp12} and the fact that $s\leqslant 7$. Supposing that $(p, q)\neq (8,61)$, we check that Lemma \ref{lp11} can be applied. We have $q > p\geqslant 4$, so $q\geqslant 5$. Clearly, $a_1\geqslant 2$ and $1\leqslant s\leqslant a_1-1$. Furthermore, $(q+3)/u = p\geqslant 4$. Finally, we need to check that $(q,u)\notin \{(17,2), (61,8)\}$. If $(q,u) = (17,2)$, then $p = (q+3)/u = 10$; however, $(p,q)\neq (10,17)$ by hypothesis. If $(q, u) = (61,8)$, then $p = (q+3)/u = 8$; however, $(p,q)\neq (8, 61)$ by our supposition. Therefore, Lemma \ref{lp11} states that \eqref{ep65} does not hold. Hence, there are no integers $2\leqslant x_1\leqslant x_2$ such that $(x_1, x_2)\neq (a_1, a_2)$ and \eqref{ep8} holds. This completes our proof. 
\end{proof}

\section{The best two-term underapproximation when $\Upsilon(p,q) \geqslant 4$}\label{>3}
In this section, we prove the third statement of Theorem \ref{mk0}. In particular, for every $k\geqslant 4$, we show the existence of an integer $q > k+1$ such that 
\begin{enumerate}
\item[i)] $\Upsilon(k+1,q) = k$,
\item[ii)] $k$ divides $q$, and 
\item[iii)] $\mathcal{G}$ does not give the best two-term underapproximation for $(k+1)/q$; specifically,
\begin{equation}\label{s3}\frac{1}{a_1} + \frac{1}{a_2}\ <\ \frac{1}{a_1+1} + \frac{1}{G((k+1)/q - 1/(a_1+1))}\ <\ \frac{k+1}{q},\end{equation}
where $(a_n)_{n=1}^\infty = \mathcal{G}((k+1)/q)$.
\end{enumerate}

Fix $k\geqslant 4$. The set of $q$ that is greater than $k+1$ and satisfies Condition i) is 
$$\{(k+1)u-k\,:\, u\geqslant 2\}.$$
Hence, the set of $q$ that is greater than $k+1$ and satisfies both Condition i) and Condition ii) is
$$\{(k+1)kv - k\,:\, v\geqslant 1\}.$$
The problem comes to proving the existence of $v\geqslant 1$ such that $q = (k+1)kv-k$ satisfies Condition iii); i.e.,
$$\frac{1}{a_1} + \frac{1}{a_2}\ <\ \frac{1}{a_1+1} + \frac{1}{G((k+1)/((k+1)kv-k) - 1/(a_1+1))}\ <\ \frac{k+1}{(k+1)kv-k}.$$
We easily compute that
$$\begin{cases}a_1 &\ =\ kv,\\ a_2&\ =\ kv((k+1)v-1)+1,\\ G\left(\frac{k+1}{(k+1)kv-k} - \frac{1}{a_1+1}\right)&\ =\ \left\lfloor\frac{k((k+1)v-1)(kv+1)}{2k+1}\right\rfloor + 1.\end{cases}$$
Hence, \eqref{s3} is equivalent to 
$$\frac{1}{kv} + \frac{1}{kv((k+1)v-1)+1} - \frac{1}{kv+1} \ <\ \left(\left\lfloor\frac{k((k+1)v-1)(kv+1)}{2k+1}\right\rfloor + 1\right)^{-1},$$
which can be rewritten as
$$\frac{k(kv+1)((k+1)v-1)+k+1/v}{2k+1+1/(kv^2)}\ >\ \left\lfloor\frac{k(kv+1)((k+1)v-1)}{2k+1}\right\rfloor + 1.$$
Therefore, it suffices to prove the following theorem.
\begin{thm}\label{fthm}
For every $k\geqslant 4$, there exists $v\geqslant 1$ such that 
\begin{equation}\label{s5}\frac{k(kv+1)((k+1)v-1)+k+1/v}{2k+1+1/(kv^2)}\ >\ \left\lfloor\frac{k(kv+1)((k+1)v-1)}{2k+1}\right\rfloor + 1.\end{equation}
\end{thm}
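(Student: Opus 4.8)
The plan is to fix $k\geqslant 4$ and exhibit a \emph{single} value of $v$ (depending only on a simple closed form in $k$) for which \eqref{s5} holds, rather than chasing an asymptotic-in-$v$ argument. The key observation is that the right-hand side of \eqref{s5} is the floor of $X_v := \dfrac{k(kv+1)((k+1)v-1)}{2k+1}$ plus $1$, so \eqref{s5} is equivalent to
\[
\frac{k(kv+1)((k+1)v-1)+k+\tfrac{1}{v}}{2k+1+\tfrac{1}{kv^2}}\ >\ \lfloor X_v\rfloor + 1.
\]
Write $X_v = \lfloor X_v\rfloor + \{X_v\}$ with fractional part $\{X_v\}\in[0,1)$. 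The left-hand side is $X_v$ multiplied by a factor slightly less than $1$ and with a small additive perturbation; more precisely, the left-hand side equals
\[
\frac{X_v(2k+1) + k + 1/v}{2k+1 + 1/(kv^2)}\ =\ X_v + \frac{k + 1/v - X_v/(kv^2)}{2k+1+1/(kv^2)}.
\]
So \eqref{s5} becomes
\[
X_v + \frac{k + 1/v - X_v/(kv^2)}{2k+1+1/(kv^2)}\ >\ \lfloor X_v\rfloor + 1,
\qquad\text{i.e.}\qquad
\frac{k + 1/v - X_v/(kv^2)}{2k+1+1/(kv^2)}\ >\ 1 - \{X_v\}.
\]
Since $X_v = O(v^2)$, the term $X_v/(kv^2)$ is $O(1/k)$ — bounded — so the left-hand side of this last inequality stays bounded below (roughly by a constant near $\tfrac{1}{2}$ for large $k$, and it is easy to check it is $\geqslant$ some explicit positive constant for all $k\geqslant 4$). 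Hence it suffices to choose $v$ so that $\{X_v\}$ is \emph{close enough to $1$} — say $\{X_v\} \geqslant 1 - \varepsilon_k$ for a suitable $\varepsilon_k$. In fact the cleanest route is: choose $v$ so that $\{X_v\} = \tfrac{2k}{2k+1}$ exactly, i.e. so that the numerator $k(kv+1)((k+1)v-1)$ is congruent to $2k \pmod{2k+1}$; then $1-\{X_v\} = \tfrac{1}{2k+1}$, which is tiny, and the inequality reduces to checking that the bounded left-hand side exceeds $\tfrac{1}{2k+1}$, which is immediate.

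So the real work is the \textbf{congruence step}: find $v\geqslant 1$ with $k(kv+1)((k+1)v-1)\equiv 2k \pmod{2k+1}$. Modulo $m := 2k+1$ we have $2k\equiv -1$, $k\equiv k$, $kv+1\equiv kv+1$, and $(k+1)v-1\equiv (k+1)v - 1$; also $2k+2\equiv 1$, so $k+1\equiv (2k+2)/2$, hence $2(k+1)\equiv 1\pmod m$, i.e. $k+1\equiv 2^{-1}\pmod m$ (note $\gcd(2,m)=1$ since $m$ is odd), and similarly $2k\equiv -1$ gives $k\equiv -2^{-1}\pmod m$. Substituting, $kv+1\equiv 1 - 2^{-1}v$ and $(k+1)v - 1\equiv 2^{-1}v - 1$, so
\[
k(kv+1)((k+1)v-1)\ \equiv\ (-2^{-1})(1-2^{-1}v)(2^{-1}v-1)\ =\ 2^{-1}(2^{-1}v-1)^2 \pmod m .
\]
Thus the target congruence is $2^{-1}(2^{-1}v - 1)^2 \equiv -1 \equiv -2^{-1}\cdot 2 \pmod m$... let me instead write it as $(2^{-1}v-1)^2 \equiv -2 \pmod m$ after multiplying by $2$. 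Setting $w := 2^{-1}v - 1 \pmod m$ (as $v$ ranges over a complete residue system mod $m$, so does $w$), we need $w^2 \equiv -2 \pmod{2k+1}$. This is \emph{not} always solvable, so the plan must be flexible: I will \textbf{not} insist on hitting $\{X_v\} = \tfrac{2k}{2k+1}$ exactly, but rather show that $1 - \{X_v\}$ can be made $\leqslant c/k$ for an absolute constant $c$ — which only requires the image of the quadratic map $w\mapsto 2^{-1}w^2$ (mod $m$, $w$ ranging over all residues) to come within $O(1)$ of the residue class $-1$; equivalently, that $-2$ is within $O(1)$, as a residue mod $m$, of a quadratic residue mod $m$. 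Since quadratic residues mod $m$ have positive density (at least $\sim m/2$ of the residues are $\pm$ a square when $m$ is prime, and a constant fraction in general by counting), there is always a quadratic residue within distance $O(1)$ of $-2\pmod m$; pinning down an explicit bound (distance $\leqslant$ something like $2\sqrt{m}$ or even a constant, via a short pigeonhole over $\{w^2 : 0\leqslant w\leqslant \sqrt m\}$) then gives $1-\{X_v\}\leqslant O(1/\sqrt m) = O(1/\sqrt k)$, which still beats the bounded left-hand side for all $k\geqslant 4$ (with finitely many small $k$ checked by hand).

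\textbf{Main obstacle.} The crux is exactly this last density/gap estimate: guaranteeing, for \emph{every} $k\geqslant 4$ and not just asymptotically, that there is an achievable fractional part $\{X_v\}$ close enough to $1$. Two clean ways to nail it: (a) a pigeonhole argument — among the $\lfloor\sqrt m\rfloor + 1$ values $w^2 \bmod m$ for $w = 0,1,\dots,\lfloor\sqrt m\rfloor$, two are within $\sqrt m$ of each other, and translating gives a square within $O(\sqrt m)$ of any target, in particular of $-2$; (b) if one prefers to avoid even that, observe $X_v$ as a function of the integer $v$ takes values whose fractional parts, as $v$ runs over one period $v=1,\dots,2k+1$, are $\{j/(2k+1) : j\in S\}$ for the set $S$ of attained residues, and one only needs $S$ to contain \emph{some} residue $\geqslant 2k+1 - C$ for an absolute constant $C$ — then bound $1-\{X_v\}\leqslant C/(2k+1)$ and compare with the explicit lower bound for $\big(k+1/v - X_v/(kv^2)\big)/\big(2k+1+1/(kv^2)\big)$, which I will compute to be $\geqslant \tfrac{1}{3}$ (say) for all $k\geqslant 4$ and the chosen $v\leqslant 2k+1$. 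Handling the genuinely small cases $k\in\{4,5,\dots\}$ up to wherever the asymptotics kick in will be a finite verification, done by exhibiting an explicit $v$ for each (indeed small $v$ such as $v=1$ or $v=2$ will typically already work, which one can check directly in \eqref{s5}). I would structure the final proof as: (1) algebraic reduction of \eqref{s5} to the fractional-part inequality; (2) explicit uniform lower bound on the perturbation term; (3) the congruence computation reducing the target to "$-2$ near a square mod $2k+1$"; (4) the pigeonhole gap bound; (5) a short table for the finitely many exceptional small $k$.
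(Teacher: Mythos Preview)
Your reduction to comparing the perturbation term with $1-\{X_v\}$, and the congruence $N \equiv 2^{-1}(2^{-1}v-1)^2 \pmod{2k+1}$, are both correct. (Minor slip: $X_v/(kv^2) \approx k(k+1)/(2k+1)$, not $O(1/k)$; but $k - X_v/(kv^2) \approx k^2/(2k+1)$, so the perturbation is still bounded below by roughly $k^2/(2k+1)^2 > 0.19$ and your strategy survives.) The genuine gap is step~(4). Your pigeonhole---``two of the first $\sqrt m$ squares are within $\sqrt m$, and translating gives a square within $O(\sqrt m)$ of any target''---is invalid: translates of squares are not squares, so closeness of two squares says nothing about proximity to the fixed target $-2$. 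Option~(b) also fails: asking $S$ to contain a residue $\geqslant m - C$ for an \emph{absolute} $C$ amounts to asking that one of $-2,-4,\ldots,-2C$ is always a quadratic residue modulo $m=2k+1$; when $m$ is prime and $-2$ is a non-residue this demands a non-residue among $1,2,\ldots,C$, but the least non-residue is unbounded in $m$.

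The idea is salvageable, though not by the argument you wrote. Take $w$ to be the largest \emph{odd} integer with $w^2 < m$; then $w^2 \bmod m = w^2$ is odd and equals $m - O(\sqrt m)$, whence $2^{-1}w^2 \bmod m = (w^2+m)/2 = m - O(\sqrt m)$ and $1-\{X_v\} = O(1/\sqrt m)$, which beats the perturbation once $k$ exceeds an explicit threshold (roughly $k \geqslant 32$), the remaining $k$ handled by direct checks. The parity constraint on $w$ is essential---for even $w^2 \bmod m$ one only gets $2^{-1}w^2 \bmod m \leqslant (m-1)/2$---and is absent from your sketch. For contrast, the paper avoids quadratic residues entirely: it splits on $k \bmod 4$, prescribes an explicit $v$ in each case (e.g.\ $v=1$ when $4\mid k$; otherwise $v$ is a simple function of an auxiliary integer $s$ determined by $k$), computes $\{X_v\}$ exactly via a short divisibility claim, and reduces \eqref{s5} to a quadratic inequality verified by locating the relevant parameter between its two real roots.
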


We establish Theorem \ref{fthm} through the next four subsections, each of which corresponds to a case of $k\mod 4$.

\subsection{When $k\equiv 0 \mod 4$}
Write $k = 4j$ for some $j\geqslant 1$. In that case, $v = 1$ is a solution to \eqref{s5}. Indeed, plugging $k = 4j$ and $v = 1$ into \eqref{s5} gives
$$\frac{16j^2(4j+1)+4j+1}{8j+1+1/(4j)}\ >\ \left\lfloor \frac{16j^2(4j+1)}{8j+1}\right\rfloor+1,$$
which is the same as 
$$\frac{64j^3(4j+1)+16j^2+4j}{32j^2+4j+1}\ >\ 8j^2+j.$$
It is easy to check that the last inequality holds. 

\subsection{When $k\equiv 1\mod 4$}\label{sss1} Write $k = 4j+1$ for some $j\geqslant 1$. Choose $s\geqslant 1$ such that 
$$\frac{s(s+1)}{2}\ \leqslant\ j\ \leqslant\ \frac{(s+1)(s+2)}{2}-1.$$
Choose $v = 2s+5$. Our goal is to show that $(k,v) = (4j+1, 2s+5)$ is a solution to \eqref{s5}. 
The quantity inside the floor function in \eqref{s5} is
$$x\ :=\ \frac{(4j+1)((4j+1)(2s+5)+1)((4j+2)(2s+5)-1)}{2(4j+1)+1}.$$

\begin{claim}\label{cls1} For $j\geqslant 1$, 
$$x-\left\lfloor x\right\rfloor\ =\ \frac{5j+2+3s+(s-2)(s-1)/2}{8j+3}.$$
\end{claim}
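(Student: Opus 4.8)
The plan is to prove Claim~\ref{cls1} by a direct modular computation. Write $x = Z/D$, where $D := 2(4j+1)+1 = 8j+3$ is the denominator appearing in the definition of $x$ and
\[
Z \ :=\ (4j+1)\bigl((4j+1)(2s+5)+1\bigr)\bigl((4j+2)(2s+5)-1\bigr)
\]
is the integer numerator. I will reduce $Z$ modulo $D$, show that its least nonnegative residue is exactly $r := 5j+2+3s+(s-2)(s-1)/2$, and then verify $0 \le r < D$; once this is done, $x = \lfloor x\rfloor + r/D$ with $\lfloor x\rfloor = (Z-r)/D$, which is precisely the claim.

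For the reduction, the basic congruences are $2(4j+1) = D-1 \equiv -1 \pmod D$ and $2(4j+2) = D+1 \equiv 1 \pmod D$. Doubling each of the three factors of $Z$ and reducing modulo $D$ gives $2\bigl((4j+1)(2s+5)+1\bigr) \equiv -(2s+5)+2 = -(2s+3)$, $2\bigl((4j+2)(2s+5)-1\bigr) \equiv (2s+5)-2 = 2s+3$, and $2(4j+1) \equiv -1$, so that $8Z \equiv (-1)\cdot\bigl(-(2s+3)\bigr)\cdot(2s+3) = (2s+3)^2 \pmod D$. On the other hand, expanding $8r = 40j + 4s^2 + 12s + 24$ shows $8r - (2s+3)^2 = 40j+15 = 5D$, so $8r \equiv (2s+3)^2 \pmod D$ as well. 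Since $D \equiv 3 \pmod 8$ we have $\gcd(8,D)=1$, and cancelling the factor $8$ yields $Z \equiv r \pmod D$.

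It remains to locate $r$ in $[0,D)$. Nonnegativity is immediate: $(s-2)(s-1)\ge 0$ for every integer $s\ge 1$, and $(s-2)(s-1)/2$ is an integer (half a product of consecutive integers), so $r \ge 5j+2+3s > 0$. For $r < D$ it suffices, after doubling, to show $2r = 10j + s^2 + 3s + 6 < 16j + 6 = 2D$, i.e.\ $s^2 + 3s < 6j$; this follows from the defining lower bound $j \ge s(s+1)/2$ of $s$, since $6j \ge 3s(s+1) = 3s^2 + 3s > s^2 + 3s$ for $s \ge 1$. Hence $\lfloor x\rfloor = (Z-r)/D$ and $x - \lfloor x\rfloor = r/D$.

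I do not expect a genuine obstacle here; the content is bookkeeping. The two points needing a little care are (i) clearing denominators correctly in the reduction of $Z$ — tracking $8Z$ rather than $Z$ keeps everything integral and lets one apply $2(4j+1)\equiv -1$ directly — and (ii) checking that $8$ is invertible modulo $D$ before cancelling it. I also note that only the lower bound on $j$ coming from the definition of $s$ is used in this claim; the upper bound $j \le (s+1)(s+2)/2 - 1$ will be needed afterwards, when \eqref{s5} is verified by comparing $x$ with $\lfloor x\rfloor + 1$.
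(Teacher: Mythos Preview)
Your argument is correct and takes a genuinely different route from the paper. The paper simply subtracts $r/(8j+3)$ from $x$ and records the resulting polynomial in $j$ and $s$ (presumably via \textsc{Mathematica}), checking by inspection that it is a nonnegative integer; the only piece of actual reasoning is the verification $0<r/(8j+3)<1$ using $j\geqslant s(s+1)/2$, which you also carry out. Your approach instead exploits the congruences $2(4j+1)\equiv -1$ and $2(4j+2)\equiv 1\pmod{8j+3}$ to reduce $8Z$ modulo $D$ without ever expanding the cubic in $j$, then cancels the invertible factor $8$. This is more elegant and entirely by hand, whereas the paper's version is a one-line CAS check; the trade-off is that your route requires the extra observations that $r$ is an integer and that $\gcd(8,D)=1$, both of which you handle. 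One small point you leave implicit: having $Z\equiv r\pmod D$ with $0\leqslant r<D$ gives $\lfloor x\rfloor=(Z-r)/D$ only once you know $Z\geqslant 0$, but this is immediate since each of the three factors of $Z$ is positive.
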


\begin{proof}
Due to $j\geqslant s(s+1)/2$, we have
$$0\ <\ \frac{5j+2+3s+(s-2)(s-1)/2}{8j+3}\ <\ 1.$$
It remains to show that 
$$x - \frac{5j+2+3s+(s-2)(s-1)/2}{8j+3}\ \in\ \mathbb{N}.$$
This is indeed true, as the difference is
$$32j^2s^2+160j^2s+200j^2+20js^2+100js+125j+\frac{1}{2}(5s^2+27s)+17.$$
Note that $5s^2+27s$ is even for all $s$. 
\end{proof}

Due to the proof of Claim \ref{cls1}, the right side of \eqref{s5} is equal to
\begin{align*}
R&\ :=\ \lfloor x\rfloor + 1\\
&\ =\ 32j^2s^2+160j^2s+200j^2+20js^2+100js+125j+\frac{1}{2}(5s^2+27s)+18,
\end{align*}

Then \eqref{s5} is equivalent to 
\begin{align*}
&(4j+1)((4j+1)(2s+5)+1)((4j+2)(2s+5)-1)+(4j+1)+\frac{1}{2s+5}\\
&\ >\ \left(8j+3+\frac{1}{(4j+1)(2s+5)^2}\right)R.
\end{align*}
\textsc{Mathematica} simplifies the above inequality to a nice quadratic inequality in $j$:
$$(8s^2+40s+50)j^2 - (4s^4+32s^3+85s^2+79s+10)j - (s^4+8s^3+22s^2+23s+6) \ <\ 0.$$
It suffices to verify that $j$ lies strictly between the two roots of the equation
$$(8s^2+40s+50)x^2 - (4s^4+32s^3+85s^2+79s+10)x - (s^4+8s^3+22s^2+23s+6) \ =\ 0.$$
The two roots $x_1 < x_2$ are
$$\frac{4s^4+32s^3+85s^2+79s+10\pm\sqrt{\Delta}}{16s^2+80s+100},$$
where $\Delta = 16s^8+256s^7+1736s^6+6488s^5+14545s^4+19926s^3+16213s^2+7140s+1300$. Observe that $x_1 < 0$ for all $s\geqslant 1$, so $j > x_1$. On the other hand, 
$$j \ \leqslant\ \frac{(s+1)(s+2)}{2}-1\ <\ x_2.$$
This completes our proof of Theorem \ref{fthm} in the case $k\equiv 1\mod 4$. 

\begin{figure}[H]
\centering
\includegraphics[scale=0.75]{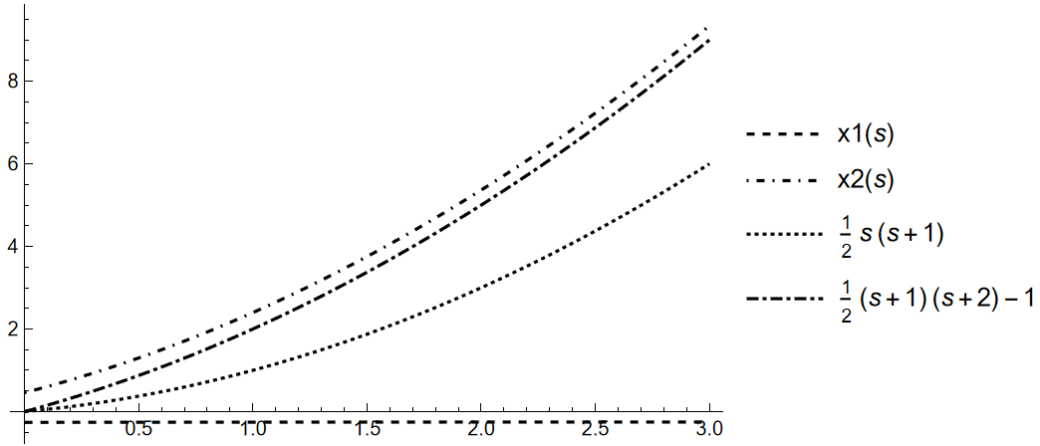}
\caption{The two bounds for $\ell$ are strictly between $x_1$ and $x_2$ for any value of $s$.}
\end{figure}

\subsection{When $k\equiv 2\mod 4$}\label{sss2}
Write $k = 4j+2$ for some $j\geqslant 1$. A brute-force calculation confirms \eqref{s5} for all $1\leqslant j\leqslant 11$. Examples of $(k,v)$ are shown in the following table. 
\begin{table}[H]
\centering
\begin{tabular}{ |c|c|c|c|c|c|c|c|c|c|c|c| } 
 \hline
 $k$ & $6$ & $10$ & $14$ & $18$ & $22$ & $26$ & $30$ & $34$ & $38$ & $42$ & $46$  \\ 
 $v$ & $8$ & $11$ & $12$ & $12$ & $12$ & $15$ & $16$ & $16$ & $16$ & $16$ & $16$  \\ 
 \hline
\end{tabular}
\caption{Values of $(k,v)$ ($6\leqslant k\leqslant 46$) that satisfies \eqref{s5}.}
\end{table}
We assume that $j\geqslant 12$. Choose $s\geqslant 0$ such that 
$$12+s(s+7)\ \leqslant \ j\ \leqslant\ 11+(s+1)(s+8).$$
Let $v = 4s+20$. 
We shall show that $(k,v) = (4j+2, 4s+20)$ is a solution of \eqref{s5}. The quantity inside the floor function in \eqref{s5} is
$$x\ :=\ \frac{(4j+2)((4j+2)(4s+20)+1)((4j+3)(4s+20)-1)}{8j+5}.$$
\begin{claim}\label{cls2}
For $j\geqslant 12$,
$$x-\lfloor x\rfloor\ =\ \frac{2s^2+18s+4j+43}{8j+5}.$$
\end{claim}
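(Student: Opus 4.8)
The plan is to mirror the structure of the proof of Claim \ref{cls1}. First I would show that the claimed fractional part lies strictly in $(0,1)$: since $s\geqslant 0$ and $j\geqslant 12$, the numerator $2s^2+18s+4j+43$ is positive, and the upper bound $j\leqslant 11+(s+1)(s+8)$ should translate, after expanding, into $2s^2+18s+4j+43 < 8j+5$; one checks this inequality directly (it rearranges to $4j > 2s^2+18s+38$, i.e. $j > 12 + s(s+7)/... $ up to the exact constants, which is exactly what the lower-bound choice $j\geqslant 12+s(s+7)$ guarantees, modulo a short verification that the two window endpoints are compatible). So the range constraints on $j$ in terms of $s$ are precisely engineered to pin the fractional part into $(0,1)$.

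Second, the substantive step: I would verify that
$$x-\frac{2s^2+18s+4j+43}{8j+5}\ \in\ \mathbb{N}.$$
Here $x = (4j+2)((4j+2)(4s+20)+1)((4j+3)(4s+20)-1)/(8j+5)$. The denominator is $8j+5$, which is coprime to $4j+2 = 2(2j+1)$ and to $2$ generally, so the claim amounts to checking a polynomial divisibility. Concretely I would expand the numerator $(4j+2)((4j+2)(4s+20)+1)((4j+3)(4s+20)-1) - (2s^2+18s+4j+43)$ as a polynomial in $j$ and $s$, divide by $8j+5$, and confirm the quotient is a polynomial with integer coefficients — exactly as was done in Claim \ref{cls1}, where the difference worked out to an explicit integer polynomial. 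This is a routine (if tedious) symbolic computation best delegated to \textsc{Mathematica}; the only genuine subtlety is a parity check analogous to the ``$5s^2+27s$ is even'' remark in Claim \ref{cls1}, ensuring any half-integer-looking coefficients in fact combine to integers for all $s$.

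The main obstacle I anticipate is purely bookkeeping: getting the exact constants in the $j$-window ($12+s(s+7)\leqslant j\leqslant 11+(s+1)(s+8)$) to match up with (a) nonemptiness of the window for every $s\geqslant 0$ — consecutive windows must tile $\{12,13,\dots\}$ without gaps, which is why the upper endpoint for parameter $s$ equals one less than the lower endpoint for $s+1$ — and (b) the strict two-sided bound $0 < (2s^2+18s+4j+43)/(8j+5) < 1$. Once the window is correctly calibrated, both halves of the claim fall out: the range bounds give the $(0,1)$ containment, and the polynomial identity gives the integrality of the difference, so $x - \lfloor x\rfloor$ equals the stated fraction. After establishing the claim, the remainder of the $k\equiv 2\bmod 4$ case will proceed as in Section \ref{sss1}: substitute to get an explicit value for $R = \lfloor x\rfloor + 1$, reduce \eqref{s5} to a quadratic inequality in $j$, and check that the chosen $j$-window lies strictly between the roots.
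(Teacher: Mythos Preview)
Your proposal is correct and matches the paper's approach exactly: show the claimed fraction lies in $(0,1)$ from the $j$-window, then verify by direct expansion (via \textsc{Mathematica}) that $x$ minus this fraction is an integer polynomial in $j,s$. Two small corrections: it is the \emph{lower} bound $j\geqslant 12+s(s+7)$ that forces $2s^2+18s+4j+43 < 8j+5$ (you eventually land on this), and no parity check is needed here since the quotient $959 + 3600j + 3200 j^2 + 382 s + 1440 j s + 1280 j^2 s + 38 s^2 + 144 j s^2 + 128 j^2 s^2$ already has integer coefficients.
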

\begin{proof}
It follows from $j\geqslant 12+s(s+7)$ that
$$0\ <\ \frac{2s^2+18s+4j+43}{8j+5} \ <\ 1.$$
Furthermore, \textsc{Mathematica} computes
$x-(2s^2+18s+4j+43)/(8j+5) = $ 
$$959 + 3600j + 3200 j^2 + 382 s + 1440 j s + 1280 j^2 s + 38 s^2 + 
 144 j s^2 + 128 j^2 s^2\in \mathbb{N}.$$
 This completes our proof. 
\end{proof}

Due to the proof of Claim \ref{cls2}, the right side of \eqref{s5} is
$$R\ :=\ 960 + 3600j + 3200 j^2 + 382 s + 1440 j s + 1280 j^2 s + 38 s^2 + 
 144 j s^2 + 128 j^2 s^2.$$
Plugging $(k, v) = (4j+2, 4s+20)$ into \eqref{s5}, we need to confirm that
\begin{align*}&(4j+2)((4j+2)(4s+20)+1)((4j+3)(4s+20)-1)+4j+2+\frac{1}{4s+20}\\
&\ >\ \left(8j+5+\frac{1}{(4j+2)(4s+20)^2}\right)R.
\end{align*}
\textsc{Mathematica} simplifies the above inequality to
$$(64s+320)j^2-(64s^3+896s^2+4088s+6048)j - (32s^3+448s^2+2061s+3108)\ <\ 0.$$
It suffices to show that $j$ lies strictly between the two roots of the equation
$$(64s+320)x^2-(64s^3+896s^2+4088s+6048)x - (32s^3+448s^2+2061s+3108)\ =\ 0.$$
The two roots $x_1 < x_2$ are
$$
\frac{32s^3+448s^2+2044s+3024\pm\sqrt{\Delta}}{64s+320},
$$
where $$\Delta \ =\ 16 (64s^6+1792s^5+20848s^4+128992s^3+447669s^2+826284s+633696).$$
Since $x_1 < 0$, we have $x_1 < j$. By our choice of $s$,
$$j\ \leqslant\ 11+(s+1)(s+8)\ <\ x_2.$$
We are done with the case $k\equiv 2\mod 4$.

\subsection{When $k\equiv 3\mod 4$}
Write $k = 4j+3$ for some $j\geqslant 1$. A brute-force calculation confirms \eqref{s5} for all $1\leqslant j\leqslant 5$. Examples of $(k,v)$ are shown in the following table. 
\begin{table}[H]
\centering
\begin{tabular}{ |c|c|c|c|c|c|c|c|c|c|c|c| } 
 \hline
 $k$ & $7$ & $11$ & $15$ & $19$ & $23$  \\ 
 $v$ & $8$ & $13$ & $12$ & $12$ & $12$  \\ 
 \hline
\end{tabular}
\caption{Values of $(k,v)$ ($7\leqslant k\leqslant 23$) that satisfies \eqref{s5}.}
\end{table}
We assume that $j\geqslant 6$. Choose $s\geqslant 2$ such that 
$$s(s+1)\ \leqslant\ j\ \leqslant\ (s+1)(s+2)-1.$$
Let $v = 4s+8$. The proof of the following theorem is similar to what we have done in Subsections \ref{sss1} and \ref{sss2}, so we move it to the appendix for interested readers.

\begin{thm}\label{fthm3}
For $j\geqslant 6$, $(k, v) = (4j+3, 4s+8)$ is a solution of \eqref{s5}.
\end{thm}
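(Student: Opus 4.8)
\textbf{Proof proposal for Theorem \ref{fthm3}.} The plan is to run exactly the same machine as in Subsections \ref{sss1} and \ref{sss2}. Set $k=4j+3$ and $v=4s+8$, so that $2k+1=8j+7$, and write
$$x\ :=\ \frac{k(kv+1)((k+1)v-1)}{2k+1}\ =\ \frac{(4j+3)((4j+3)(4s+8)+1)((4j+4)(4s+8)-1)}{8j+7}.$$
As before, the inequality \eqref{s5} will follow once we identify $\lfloor x\rfloor$ explicitly and then reduce \eqref{s5}, after clearing its two (positive) denominators, to a quadratic inequality in $j$ whose root interval is seen to contain $j$.

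First I would prove the fractional-part claim, the analogue of Claims \ref{cls1} and \ref{cls2}:
$$x-\lfloor x\rfloor\ =\ \frac{4j+2s^2+6s+8}{8j+7}\qquad\text{for }s\geqslant 2.$$
There are two things to check. The bound $0<4j+2s^2+6s+8<8j+7$ is trivial on the left, and on the right it is equivalent to $2s^2+6s+1<4j$; since $j\geqslant s(s+1)$ this reduces to $2s^2-2s-1>0$, which holds for all $s\geqslant 2$ (and pins down why the range starts at $s=2$, i.e.\ $j\geqslant 6$). That $x-(4j+2s^2+6s+8)/(8j+7)$ is a nonnegative integer can be verified directly by a computer-algebra expansion, or conceptually from the congruence $2(4j+3)\equiv-1\pmod{8j+7}$: writing $P=k(kv+1)((k+1)v-1)$, one gets $kv\equiv-(2s+4)$, $(k+1)v\equiv 2s+4\pmod{8j+7}$, hence $2P\equiv (2s+3)^2\pmod{8j+7}$, and the unique residue $r\in[0,8j+7)$ with $2r\equiv(2s+3)^2$ is exactly $r=4j+2s^2+6s+8$ under the size bound just established. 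From the claim, $R:=\lfloor x\rfloor+1=\dfrac{P+4j-2s^2-6s-1}{8j+7}$, an explicit polynomial in $j$ and $s$.

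Finally, substituting $(k,v)=(4j+3,4s+8)$ into \eqref{s5} and clearing denominators, a CAS simplification should collapse the inequality to the form $A(s)\,j^2-B(s)\,j-C(s)<0$ with $A,B,C$ positive polynomials in $s$ (the leading coefficient being linear in $s$, as in Subsection \ref{sss2}). It then suffices to show $j$ lies strictly between the two real roots $x_1<x_2$: one has $x_1<0\leqslant j$ because $C(s)>0$, and $j\leqslant (s+1)(s+2)-1<x_2$ by our choice of $s$, the last inequality being the single-variable polynomial statement $A(s)\big((s+1)(s+2)-1\big)^2-B(s)\big((s+1)(s+2)-1\big)-C(s)<0$ for all $s\geqslant 2$, which can be dispatched by a crude term-by-term estimate or by examining the discriminant. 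The main obstacle I anticipate is purely bookkeeping: getting the numerator of the fractional-part claim exactly right (the parity step that replaces $(2s+3)^2$ by $(2s+3)^2+(8j+7)$ is easy to slip on), together with confirming the upper-root inequality uniformly in $s$; both are routine but tedious, which is why the full argument is relegated to the appendix.
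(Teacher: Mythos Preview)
Your proposal is correct and follows essentially the same route as the paper's appendix proof: identify the fractional part of $x$, extract $R=\lfloor x\rfloor+1$ as an explicit polynomial, reduce \eqref{s5} (via CAS) to a quadratic inequality $64(s+2)j^2-8(8s^3+40s^2+51s+7)j-(48s^3+240s^2+343s+115)<0$, and verify that $j$ lies between its roots. Your congruence computation $2P\equiv(2s+3)^2\pmod{8j+7}$ is a pleasant conceptual addition the paper omits (it simply expands), and your upper bound $j\leqslant(s+1)(s+2)-1$ is the intended one---the paper's appendix has a typo writing $(s+1)(s+2)/2-1$ in the final line.
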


\section{Stepwise underapproximation by $\mathcal{G}$}\label{compare}
This section analyzes the performance of $\mathcal{G}$ at each step by proving Theorem \ref{mk} and its complementary result, Proposition \ref{mk2}.

\begin{proof}[Proof of Theorem \ref{mk}]
We prove that i) implies ii) by contrapositive. Suppose that 
$$\frac{N}{Na_m} \ =\ \frac{1}{a_m} \ <\ \frac{N}{b_m}\ <\ e_{m-1},$$
which, combined with \eqref{e13}, gives
$$\frac{N}{Na_m - 1}\ \leqslant\ \frac{N}{b_m}\ <\ e_{m-1}\ =\ \frac{1}{a_m} + \sum_{n=m+1}^\infty\frac{1}{a_n}\ \leqslant\ \frac{1}{a_m} + \frac{1}{a_{m+1}-1}.$$
Rearranging $N/(Na_m-1)\ <\ 1/a_m + 1/(a_{m+1}-1)$, we obtain
$$a_{m+1} \ <\ N a_m^2 - a_m + 1.$$

Next, we show that ii) implies iii). By ii), $N a_m$ is the smallest positive integer such that $N/(Na_m) < e_{m-1}$. Hence,
$$e_{m-1}\ \in\ \left(\frac{1}{a_m}, \frac{N}{Na_m-1}\right].$$
We have
$$\Phi(e_{m-1}) \ =\ \frac{1}{G(e_{m-1})-1/e_{m-1}}\ =\ \frac{1}{a_m - 1/e_{m-1}}\ \geqslant\ \frac{1}{a_m - (Na_m-1)/N}\ =\ N.$$

Next, we assume iii) to derive iv). By iii), 
$$N \ \leqslant\ \frac{1}{G(e_{m-1})- 1/e_{m-1}}\ =\ \frac{e_{m-1}}{e_{m-1}G(e_{m-1})-1}\ =\ \frac{e_{m-1}}{e_{m-1}a_m-1}.$$
Rearranging $N\leqslant e_{m-1}/(e_{m-1}a_m-1)$ gives iv). 

Finally, we verify that iv) implies i). We have 
$$\frac{N}{N a_m - 1}\ \geqslant\ e_{m-1} \ =\ \sum_{n=m}^\infty \frac{1}{a_n}\ >\ \frac{1}{a_m} + \frac{1}{a_{m+1}}.$$
Subtracting $1/a_m$ from both sides to get
$$\frac{1}{Na_m^2 - a_m}\ >\ \frac{1}{a_{m+1}},$$
which gives i). 
\end{proof}

\begin{cor}
Let the sequence $(c_n)_{n=1}^\infty$ be such that $c_1\geqslant 2$ and $c_{n+1}\geqslant c_n^3 - c_n + 1$. Fix $N\in \mathbb{N}$ and let $m_N$ be the smallest $m$ such that $c_m\geqslant N$. Then $\theta := \sum_{n=1}^\infty 1/c_n$ has the property that whenever $m\geqslant m_N$, we have $1/a_m = N/b_m$. (Since $(c_n)_{n=1}^\infty$ is strictly increasing, it follows that $m_N\leqslant N$.)
\end{cor}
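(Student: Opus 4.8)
The plan is to verify that the hypothesis $c_{n+1} \geqslant c_n^3 - c_n + 1$ guarantees, via Theorem \ref{mk}, that equality $1/a_m = N/b_m$ holds for all $m \geqslant m_N$. The first observation is that the sequence $(c_n)_{n=1}^\infty$ satisfies $c_1 \geqslant 2$ and $c_{n+1} \geqslant c_n^3 - c_n + 1 \geqslant c_n^2 - c_n + 1$, so by \cite[Corollary 3]{Na23} we have $(c_n)_{n=1}^\infty = \mathcal{G}(\theta)$, i.e.\ $a_n = c_n$ for all $n$. This is exactly the same device used in the proof of the corollary characterizing $\Gamma$ earlier in the paper.

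Next I would fix $m \geqslant m_N$. By the definition of $m_N$ and the monotonicity of $(c_n)$ (which is immediate from $c_{n+1} \geqslant c_n^3 - c_n + 1 > c_n$ when $c_n \geqslant 2$), we have $a_m = c_m \geqslant N$. The key step is then to show that condition (i) of Theorem \ref{mk} holds, namely $a_{m+1} \geqslant N a_m^2 - a_m + 1$. From the hypothesis, $a_{m+1} = c_{m+1} \geqslant c_m^3 - c_m + 1 = a_m \cdot a_m^2 - a_m + 1 \geqslant N a_m^2 - a_m + 1$, where the last inequality uses $a_m \geqslant N$. Applying the equivalence (i) $\Leftrightarrow$ (ii) of Theorem \ref{mk} yields $1/a_m = N/b_m$, as desired.

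Finally, for the parenthetical remark $m_N \leqslant N$: since $c_1 \geqslant 2$ and the sequence is strictly increasing with integer values, we have $c_n \geqslant n+1$ for all $n$ by an immediate induction, so $c_N \geqslant N+1 \geqslant N$, forcing $m_N \leqslant N$.

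I do not expect any serious obstacle here; the only point requiring a little care is confirming that the cited \cite[Corollary 3]{Na23} applies, i.e.\ that the weaker growth condition $c_{n+1} \geqslant c_n^2 - c_n + 1$ (implied by our cubic hypothesis) is what that corollary needs to conclude $(c_n)_{n=1}^\infty = \mathcal{G}(\theta)$. Everything else is a direct substitution into Theorem \ref{mk}.
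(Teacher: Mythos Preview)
Your proof is correct and follows the paper's approach exactly: the paper's proof is the single line ``Use Theorem~\ref{mk},'' and you have filled in precisely the details that line leaves implicit --- identifying $(c_n)=\mathcal{G}(\theta)$ via \cite[Corollary~3]{Na23}, then checking that $a_m\geqslant N$ together with $a_{m+1}\geqslant a_m^3-a_m+1$ forces condition (i) of Theorem~\ref{mk}.
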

\begin{proof}
Use Theorem \ref{mk}.
\end{proof}

Before proving Proposition \ref{mk2} and giving examples to show its optimality, we make a remark about the following theorem by Nathanson.
\begin{thm} \cite[Theorem 1]{Na23}\label{mk6}
Let $\theta = p/q\in (0,1]$, where $p$ and $q$ are positive integers such that $p$ divides $q+1$, and let $(a_n)_{n=1}^\infty = \mathcal{G}(\theta)$. Then 
\begin{equation}\label{e-32}\tag{6}a_1 \ = \ \frac{q+1}{p}\mbox{ and }a_{n+1} \ =\ q\prod_{i=1}^{n}a_i+1, \quad n\geqslant 1.\end{equation}
\end{thm}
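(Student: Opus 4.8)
The plan is to prove the slightly stronger statement by induction on $n\geqslant 1$: that the error term satisfies
$$e_n\ :=\ \frac{p}{q}-\sum_{i=1}^{n}\frac{1}{a_i}\ =\ \frac{1}{q\prod_{i=1}^{n}a_i},$$
and simultaneously that $a_{n+1}=q\prod_{i=1}^{n}a_i+1$. Once this pair of statements is in hand, the recurrence in \eqref{e-32} is immediate, and the formula $a_1=(q+1)/p$ is taken care of in the base case. (This is, in fact, the special case $\Upsilon(p,q)=1$ of Theorem \ref{mk4}, since $p\mid q+1$ is equivalent to $\Upsilon(p,q)=1$ and $1\mid q$; but I would give the direct argument, which is short.)

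For the base case $n=1$: since $p\mid q+1$, the number $(q+1)/p$ is a positive integer, and because $q/p=(q+1)/p-1/p$ with $0<1/p\leqslant 1$, we get $\lfloor q/p\rfloor=(q+1)/p-1$, hence $a_1=G(p/q)=\lfloor q/p\rfloor+1=(q+1)/p$. Then $1/a_1=p/(q+1)$, so a direct computation gives $e_1=p/q-p/(q+1)=p/(q(q+1))=1/(qa_1)$. Since $e_1$ is the reciprocal of the positive integer $qa_1$, one has $G(e_1)=\lfloor qa_1\rfloor+1=qa_1+1$, i.e. $a_2=q\prod_{i=1}^{1}a_i+1$, as required.

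For the inductive step, assume $e_n=1/(q\prod_{i=1}^{n}a_i)$ and $a_{n+1}=q\prod_{i=1}^{n}a_i+1$, so that $q\prod_{i=1}^{n}a_i=a_{n+1}-1$. Then the usual telescoping identity gives
$$e_{n+1}\ =\ e_n-\frac{1}{a_{n+1}}\ =\ \frac{1}{a_{n+1}-1}-\frac{1}{a_{n+1}}\ =\ \frac{1}{(a_{n+1}-1)a_{n+1}}\ =\ \frac{1}{q\prod_{i=1}^{n+1}a_i}.$$
Again $e_{n+1}$ is the reciprocal of a positive integer, so $a_{n+2}=G(e_{n+1})=q\prod_{i=1}^{n+1}a_i+1$, closing the induction.

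I do not expect a genuine obstacle here; the argument is essentially bookkeeping. The only points that need care are: (i) the behaviour of $G$ at a reciprocal of an integer, i.e. $G(1/M)=M+1$ for $M\in\mathbb{N}$, which is precisely the right-endpoint case of \eqref{e13} (this is what turns ``$e_n$ is $1/$integer'' into an exact formula for $a_{n+1}$); and (ii) the observation that the hypothesis $p\mid q+1$ is used only once, to pin down $a_1=(q+1)/p$, after which every quantity appearing is an integer and the induction runs on pure arithmetic.
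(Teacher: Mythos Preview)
Your proof is correct and essentially coincides with the paper's approach. The paper does not prove Theorem~\ref{mk6} separately (it is cited from \cite{Na23}), but it proves the generalization Theorem~\ref{mk4} by exactly the same induction on the error-term identity $p/q-\sum_{i=1}^{n-1}1/a_i=1/\bigl((q/\Upsilon(p,q))\prod_{i=1}^{n-1}a_i\bigr)$; your argument is precisely the $\Upsilon(p,q)=1$ instance of that proof, as you yourself note.
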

\begin{rek}\normalfont\label{onr}
When $p = 1$, note that \eqref{e-32} gives 
\begin{equation*}a_1 \ = \ q+1\mbox{ and }a_{n+1} \ =\ a_{n}^2 - a_n + 1, \quad n\geqslant 1.\end{equation*}
Indeed, setting $M = q\prod_{i=1}^{n-1}a_i$, we use \eqref{e-32} to have $a_{n+1} = a_{n}M+1$ and $a_{n} = M+1$. Hence,
\begin{align*}
a_{n}^2 - a_{n} + 1&\ =\ (M+1)^2 - (M+1)+1 \ =\ M^2+M+1\\
&\ =\ M(M+1)+1 \ =\ a_nM+1 \ =\ a_{n+1}.
\end{align*}
\end{rek}
\begin{proof}[Proof of Proposition \ref{mk2}]
We prove by induction on $\ell$. 

Base case: $\ell = 0$. Then $p$ divides $q$, so $\gcd(p, q) = 1$ implies that $p  = 1$. Hence, $(\theta)^{-1} = q\in \mathbb{N}$, as claimed. 

Inductive hypothesis: suppose that the theorem is true for all $\ell \leqslant k$, for some $k\geqslant 0$. 

We prove the theorem for $\ell = k+1$.
Pick $\theta = p/q\in (0,1]$, where $p, q\in \mathbb{N}$ and $\gcd (p, q) = 1$. Suppose that $k+1$ is the smallest nonnegative integer such that $p$ divides $q+k+1$. Let $(a_n) = \mathcal{G}(p/q)$. Clearly, $a_1 = (q+k+1)/p$. We have
\begin{align*}
{\theta} - \frac{1}{a_1} \ =\ \frac{p}{q} - \frac{p}{q+k+1}\ =\ \frac{k+1}{q\cdot \frac{q+k+1}{p}}\ =\ \frac{u}{v},
\end{align*}
for some $u, v\in \mathbb{N}$, $\gcd(u, v) = 1$, and $u\leqslant k+1$. Hence, if $j$ is the smallest nonnegative such that $u$ divides $v+j$, then $j\leqslant k$. Let $(b_n) = \mathcal{G}(u/v)$. By the inductive hypothesis, 
$$\left(\frac{u}{v} - \sum_{n=1}^{j}\frac{1}{b_n}\right)^{-1}\ \in\ \mathbb{N}.$$
Replacing $u/v$ by $\theta - 1/a_1$ and observing that $b_n = a_{n+1}$ give
$$\left(\theta - \sum_{n=1}^{j+1}\frac{1}{a_n}\right)^{-1}\ \in\ \mathbb{N}.$$
As $j+1\leqslant k+1$, we are done. Finally, \eqref{e-30} follows from Remark \ref{onr}.
\end{proof}

We evaluate the optimality of the integer $\ell$ in Proposition \ref{mk2}. For $\theta = p/q\in (0,1]$ $(p,q\in \mathbb{N})$, let $\Delta(\theta)$ be the smallest nonnegative number $m$ such that
$$\left(\frac{p}{q} - \sum_{n=1}^m\frac{1}{a_n}\right)^{-1}\ \in\ \mathbb{N},$$
where $(a_n)_n = \mathcal{G}(\theta)$.
Let $u/v$ be $p/q$ in its lowest terms and let $\ell$ be the smallest nonnegative integer such that $u$ divides $v + \ell$. 
According to Proposition \ref{mk2}, $\Delta(\theta) \ \leqslant \ \ell$. We show that in general, we cannot do better. 

\begin{prop}\label{mk3}
For every $\ell\in \mathbb{Z}_{\geqslant 0}$, there exists a fraction $p/q\in (0,1]$ that satisfies three conditions
\begin{enumerate}
\item \label{ccce}$\gcd(p,q) = 1$,
\item  \label{cccz} $\ell$ is the smallest nonnegative integer such that $p$ divides $q + \ell$, 
\item \label{ccc} $\Delta(p/q) = \ell$.
\end{enumerate}
\end{prop}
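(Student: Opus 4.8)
The plan is to construct, for each $\ell \geqslant 0$, an explicit fraction $p/q$ whose greedy expansion runs for exactly $\ell$ steps before the error term becomes a unit fraction, i.e. $\Delta(p/q)=\ell$ exactly, not merely $\leqslant \ell$. Since Proposition \ref{mk2} already gives $\Delta(p/q)\leqslant \ell$ once we arrange conditions \eqref{ccce} and \eqref{cccz}, the work is entirely in forcing the lower bound $\Delta(p/q)\geqslant \ell$: we must show that none of the partial error terms $e_0, e_1, \ldots, e_{\ell-1}$ has an integer reciprocal.

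\textbf{Main construction.} I would try $q = \ell!+1$ (or a similar factorial-flavored choice) and $p = \ell$, or more robustly work backwards: pick $p=\ell$ and choose $q$ with $q \equiv 0 \pmod{\ell}$ adjusted by a prime so that $\gcd(\ell,q)=1$ forces $\ell$ itself to appear in the divisibility condition — concretely, take $q$ so that $q+\ell$ is the first multiple of $\ell$ exceeding $q$, which holds iff $q \equiv 1 \pmod \ell$ is false and more precisely iff $\ell \nmid q+j$ for $1\leqslant j\leqslant \ell-1$; the cleanest way is $q \equiv 0 \pmod \ell$... but that contradicts $\gcd(p,q)=1$ when $\ell>1$. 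So instead the right setup is: the smallest $j\geqslant 1$ with $\ell \mid q+j$ is $\ell$ precisely when $q\equiv 0\pmod \ell$, which is impossible with $\gcd(\ell,q)=1$; hence condition \eqref{cccz} with $p$ dividing $q+\ell$ and $\ell$ smallest forces us to read $\ell$ as the value of $\Upsilon$, and we want $\Upsilon(p,q)=\ell$ with $p$ NOT necessarily equal to $\ell$. I would therefore let $p$ be a prime $P > \ell$ and choose $q \equiv -\ell \pmod P$ with $\ell$ the least positive residue of $-q$ mod $P$, i.e. $q \equiv P-\ell \pmod P$; automatically $\gcd(p,q)=1$ since $P\nmid q$, and $\Upsilon(p,q)=\ell$. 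Then I compute $e_0 = p/q - 1/a_1$ with $a_1=(q+\ell)/p$, getting $e_0 = \ell/(q\cdot a_1)$ with numerator exactly $\ell$ after reduction; by induction (mirroring the proof of Proposition \ref{mk2}) each $e_{j}$ has reduced numerator that is a proper divisor-chain decreasing from $\ell$, and the reciprocal is an integer only at step $\ell$. The key point to verify is that the numerator strictly decreases at a controlled rate so that it is $>1$ for all of $e_0,\dots,e_{\ell-2}$; choosing $P$ a large prime (larger than $\ell!$, say) ensures the intermediate reductions cannot collapse the numerator prematurely, because any common factor would have to divide both $\ell$-ish numerators and the huge denominator built from $P$.

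\textbf{Order of steps.} First, fix $\ell$ and state the candidate: $p = P$ a prime exceeding $\ell!$ and $q$ the smallest integer exceeding $P$ with $q \equiv P - \ell \pmod P$ (so $q = 2P-\ell$, which is $<$ nothing problematic and $\geqslant 2$, and $p<q$ since $P<2P-\ell$ for $P>\ell$). Second, verify conditions \eqref{ccce} and \eqref{cccz} directly: $\gcd(P, 2P-\ell) = \gcd(P,\ell)=1$ because $P$ is prime and $P>\ell$, and $\Upsilon(P, 2P-\ell)=\ell$ since $P\mid (2P-\ell)+\ell = 2P$ and no smaller positive $j$ works. Third, invoke Proposition \ref{mk2} to get $\Delta(p/q)\leqslant \ell$. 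Fourth — the heart of the matter — show $\Delta(p/q) = \ell$ by tracking the numerators: set up the recursion $e_j = N_j/D_j$ in lowest terms and prove by induction that $N_j \geqslant 2$ for $j\leqslant \ell-1$, using that $D_j$ is divisible by a large power-free chunk coming from $P$ while $N_j$ divides something bounded by $\ell$, so $\gcd$-cancellations never reduce $N_j$ to $1$ early. I would model this induction on the inductive step already written out in the proof of Proposition \ref{mk2}, where $\theta - 1/a_1 = u/v$ with $u \leqslant k+1$; here I need the complementary lower bound $u \geqslant 2$ until the very last step, which follows because $u$ captures exactly "how far we still are from a unit fraction" and the choice of $P$ guarantees no accidental early divisibility.

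\textbf{Expected obstacle.} The main difficulty is the fourth step: proving the numerator of $e_j$ stays $\geqslant 2$ for all $j < \ell$, i.e. ruling out premature cancellation. The clean reductions in Nathanson's setting ($\Upsilon=1$) and in Proposition \ref{mk2} show the numerator can only shrink, but one must show it does not shrink all the way to $1$ too soon for our specific family. I expect the resolution is a coprimality argument: because $a_1 = (q+\ell)/p = (2P-\ell+\ell)/P = 2$, actually — wait, that makes $a_1=2$ regardless of $\ell$, which is a pleasant simplification, so $e_0 = p/q - 1/2 = (2p-q)/(2q) = (2P - (2P-\ell))/(2q) = \ell/(2q) = \ell/(2(2P-\ell))$, already in lowest terms since $\gcd(\ell, 2P-\ell)\mid \gcd(\ell, 2P) = \gcd(\ell,2)$; to kill even that, take $\ell$ and $P$ with $P$ odd (automatic) and handle the parity of $\ell$ by adjusting $q$ to $2P-\ell$ or $4P-\ell$ as needed so that $\gcd(\ell, 2q)=1$. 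Then $e_0$ has numerator exactly $\ell$, and subsequent greedy steps peel off unit fractions reducing the numerator by controlled amounts; the induction showing it reaches $1$ only at step $\ell$ is then a finite, transparent computation governed by the size of $P$. I would present this as the crux and allot it the bulk of the proof, with the construction and conditions \eqref{ccce}–\eqref{cccz} dispatched quickly.
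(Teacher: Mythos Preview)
Your construction has a genuine gap at the step you yourself flag as the crux. You claim that with $p=P$ a large prime and $q=2P-\ell$, the numerator of $e_j$ (in lowest terms) stays $\geqslant 2$ for all $j<\ell$, and that ``the size of $P$'' prevents premature collapse. But the size of $P$ is irrelevant here; what governs the numerator after each greedy step is the residue of the denominator modulo the current numerator, together with possible cancellation---neither of which is controlled by making $P$ large. Concretely, take $\ell=3$ and $P=7$, so $p/q=7/11$: indeed $\gcd(7,11)=1$ and $\Upsilon(7,11)=3$, but $a_1=2$, $e_0=3/22$, $a_2=8$, and
\[
e_1\ =\ \frac{3}{22}-\frac{1}{8}\ =\ \frac{2}{176}\ =\ \frac{1}{88},
\]
so $\Delta(7/11)=2\neq 3$. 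The same happens for $P=11,13,17,\ldots$: with $a_1=2$ always, the factor $2$ in the denominator of $e_0$ cancels against the numerator $j\in\{1,2\}$ of $e_1$ and the error becomes a unit fraction after only two steps. Your ``coprimality argument'' would need to rule out exactly this kind of cancellation at every intermediate stage, and there is no mechanism in your family that does so.

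The paper's proof avoids this by choosing a \emph{self-similar} family: it takes $p/q=(\ell+1)/((\ell+1)!\,k+1)$ and checks directly that one greedy step sends this to $\ell/(\ell!\,k'+1)$ for some $k'\in\mathbb{N}$, the same shape with $\ell$ replaced by $\ell-1$. Thus the numerator drops by exactly one at each step, and a clean induction on $\ell$ gives $\Delta=\ell$. The factorial in the denominator is what forces $q\equiv 1\pmod{\ell+1}$ (so the step genuinely reduces the numerator by one) while simultaneously guaranteeing that the new denominator is again $\equiv 1$ modulo the new numerator. Your prime-based construction has no analogue of this recursive structure, which is why it cannot be repaired by taking $P$ larger.
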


\begin{proof}
Let $\ell\geqslant 0$ be chosen. The case $\ell = 0$ is trivial as we can let $p = 1$ and $q = n$ for some $n\in \mathbb{N}$. Suppose that $\ell\geqslant 1$. We shall prove by induction that fractions of the form
$$\frac{\ell+1}{(\ell+1)!k+1}, \quad k\in \mathbb{N}$$
satisfy Condition \eqref{ccc} in Proposition \eqref{mk3}. (Conditions \eqref{ccce} and \eqref{cccz} are obviously satisfied.)

Base case: $\ell = 1$. Pick $k\in \mathbb{N}$ and consider the fraction $2/(2k+1)$. Clearly, $\gcd(2, 2k+1) = 1$, and $\ell = 1$ is the smallest nonnegative integer such that $2$ divides $(2k+1)+\ell$. Furthermore,
$\Delta(2/(2k+1))\leqslant 1$ by Proposition \ref{mk2}, but $(2/(2k+1))^{-1}\notin \mathbb{N}$. Hence, $\Delta(\theta) = 1 = \ell$. 

Inductive hypothesis: suppose that there exists $m\geqslant 1$ such that the claim holds for all $\ell\leqslant m$.

We show the claim is true for $\ell = m+1$. Pick $k\in \mathbb{N}$ and consider the fraction $$\frac{m+2}{(m+2)!k+1}.$$
Clearly, $\gcd(m+2, (m+2)!k+1) = 1$, and $\ell = m+1$ is the smallest nonnegative integer so that $m+2$ divides $(m+2)!k+1+\ell$. Clearly, the first term of $\mathcal{G}((m+2)/((m+2)!k+1))$ is $(m+1)!k+1$. We have
\begin{align*}\frac{m+2}{(m+2)!k+1} - \frac{1}{(m+1)!k+1}&\ =\ \frac{m+1}{((m+1)!k+1)((m+2)!k+1)}\\
&\ =\ \frac{m+1}{(m+1)!(k((m+2)!k+1)+(m+2)k)+1}.
\end{align*}
By the inductive hypothesis,
$$\Delta\left(\frac{m+1}{(m+1)!(k((m+2)!k+1)+(m+2)k)+1}\right)\ =\ m.$$
Therefore, 
$$\Delta\left(\frac{m+2}{(m+2)!k+1}\right)\ =\ m+1,$$
as desired. 
\end{proof}

\begin{exa}\normalfont
Though our bound for $\Delta$ cannot be improved in general, there are rational numbers $p/q$ with $\Delta(p/q) < \ell$. Consider $p/q = 7/54$. Here $\ell = 2$, but $(7/54-1/8)^{-1} = 216 \in \mathbb{N}$.
\end{exa}

\section{Computing $\mathcal{G}(\theta)$ for a family of $\theta\in (0,1]$}\label{gen}
While Proposition \ref{mk2} and Remark \ref{onr} guarantee that for any rational $\theta\in (0,1]$, the sequence $(a_n)_{n=1}^\infty = \mathcal{G}(\theta)$ eventually satisfies the recurrence relation $a_{n+1} = a_n^2 - a_n + 1$, we do not know if there is always a nice relation among the starting terms. Interestingly, for the special case when $\theta = p/q$, where $p$ divides $q+1$, 
Theorem \ref{mk6} gives an explicit formula to compute each term. In this section, we prove Theorem \ref{mk4}, a generalization of Theorem \ref{mk6}.  

\begin{proof}[Proof of Theorem \ref{mk4}]
Noting that $p\geqslant \Upsilon(p,q)$, we have
$$a_1 \ =\ \left\lfloor \frac{q}{p}\right\rfloor+1\ =\ \left\lfloor \frac{q+\Upsilon(p,q)}{p}-\frac{\Upsilon(p,q)}{p}\right\rfloor+1\ =\  \frac{q+\Upsilon(p,q)}{p}-1+1\ =\ \frac{q+\Upsilon(p,q)}{p}.$$
It suffices to prove the following equality
\begin{equation}\label{ef3}\frac{p}{q} - \sum_{i=1}^{n-1}\frac{1}{a_i} \ =\ \frac{1}{\frac{q}{\Upsilon(p,q)}\prod_{i=1}^{n-1}a_i}, \quad n\geqslant 2.\end{equation}
We proceed by induction. 

Base case: when $n = 2$, both sides are equal to $p\Upsilon(p,q)/(q(q+\Upsilon(p,q)))$. 

Inductive hypothesis: suppose that \eqref{ef3} holds for $n= k$ for some $k\geqslant 2$; i.e., 
\begin{equation}\label{es1}\frac{p}{q} - \sum_{i=1}^{k-1}\frac{1}{a_i} \ =\ \frac{1}{\frac{q}{\Upsilon(p,q)}\prod_{i=1}^{k-1}a_i}.\end{equation}
It follows that
$$a_{k} \ =\ \frac{q}{\Upsilon(p,q)}\prod_{i=1}^{k-1}a_i+1.$$
Hence, 
\begin{align*}\frac{p}{q} - \sum_{i=1}^{k}\frac{1}{a_i}&\ =\ \frac{1}{\frac{q}{\Upsilon(p,q)}\prod_{i=1}^{k-1}a_i}-\frac{1}{\frac{q}{\Upsilon(p,q)}\prod_{i=1}^{k-1}a_i+1}\\
&\ =\ \frac{1}{\frac{q}{\Upsilon(p,q)}\prod_{i=1}^{k-1}a_i\left(\frac{q}{\Upsilon(p,q)}\prod_{i=1}^{k-1}a_i+1\right)}\ =\ \frac{1}{\frac{q}{\Upsilon(p,q)}\prod_{i=1}^k a_i}.\end{align*}
This proves \eqref{ef3}, which in turns implies \eqref{ef1}.
\end{proof}

\begin{proof}[Proof of Proposition \ref{pl1}]
The proof is similar to that of Theorem \ref{mk4}. The only difference is to verify the base case of the following equality
\begin{equation}\label{s2}\frac{p}{q}-\sum_{i=1}^{k-1}\frac{1}{a_i}\ =\ \frac{1}{q\prod_{i=1}^{k-1}a_i}, \quad k\geqslant 3.\end{equation}
Let us check that the equality indeed holds for $k=3$ and let interested readers finish the proof.
For $k=3$, \eqref{s2} is equivalent to
$$\frac{2p\left(\lfloor qa_1/2\rfloor+1\right)-q(q+2)}{q(q+2)\left(\lfloor qa_1/2\rfloor+1\right)}\ =\ \frac{p}{q(q+2)(\lfloor qa_1/2\rfloor+1)}.$$
We, therefore, need to verify that
$$2p\left(\lfloor qa_1/2\rfloor+1\right)-q(q+2)\ =\ p.$$
Replacing $a_1$ by $(q+2)/p$ gives
$$2p\left\lfloor \frac{q(q+2)}{2p}\right\rfloor+p \ =\ q(q+2).$$
Equivalently,
$$\left\lfloor \frac{q(q+2)}{2p}\right\rfloor + \frac{1}{2}\ =\ \frac{q(q+2)}{2p},$$
which is true, because $q$ is odd. 
\end{proof}

\begin{exa}\normalfont
The sequence $(a_n)_n$ in Theorem \ref{mk4} and Proposition \ref{pl1} has the nice property that for sufficiently large $n$, $a_{n}-1$ is divisible by all $a_i$ for $i\leqslant n-1$. This is not true in general. Consider $\theta = 9/28$. The first $9$ terms of $\mathcal{G}(9/28)$ are
\begin{align*}&4, 15,  211, 44311, 1963420411, 3855019708367988511,\\ & 14861176951905611184725545411860008611,\\
&22085458039585055253184228908917552993753530939568130918727713764113414\\
&0711,\\
&48776745681827215201073590705821720129907215948452411133840819544687359\\
&97240857680148375403794922963882860912224317845389796211048815806159121\\
&3444811.
\end{align*}
None of these numbers is $1\mod 4$. Moreover, since $a_9\equiv 3\mod 4$,  Equation \eqref{e-30} gives that $a_n\equiv 3\mod 4$ for all $n\geqslant 10$. Hence, $a_n-1$ is not divisible by $a_1$.
\end{exa}

We end this paper by an application of Proposition \ref{pl1}.
For our proof of Theorem \ref{fthm2}, we borrow the argument from \cite{So05} and \cite[Theorem 5]{Na23}, whose key ingredient is the following corollary of Muirhead's inequality \cite{Na22}. For its proof, see \cite{AB15, Na23}. 
\begin{thm}\label{ki}
If $(x_i)_{i=1}^v$ and $(a_i)_{i=1}^v$ are increasing sequences of positive integers such that $(x_i)_{i=1}^v\neq (a_i)_{i=1}^v$ and 
$$\prod_{i=1}^{1+k} a_i\leqslant\ \prod_{i=1}^{1+k} x_i,$$
for all $0\leqslant k\leqslant v-1$, then
$$\sum_{i=1}^v\frac{1}{x_i} \ <\ \sum_{i=1}^v\frac{1}{a_i}.$$
\end{thm}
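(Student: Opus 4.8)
The statement is precisely the strictly convex, strictly decreasing instance of the Hardy--Littlewood--P\'olya majorization inequality, which is the usual route by which it is deduced from Muirhead's inequality; this is the approach I would take. Write $a_i = e^{\alpha_i}$ and $x_i = e^{\xi_i}$, so that $\alpha_1 \leqslant \cdots \leqslant \alpha_v$ and $\xi_1 \leqslant \cdots \leqslant \xi_v$. Taking logarithms, the hypothesis $\prod_{i=1}^{k} a_i \leqslant \prod_{i=1}^{k} x_i$ becomes $\sum_{i=1}^{k}\alpha_i \leqslant \sum_{i=1}^{k}\xi_i$ for $1 \leqslant k \leqslant v$; since both tuples are already sorted increasingly, this says exactly that the partial sums from below of $(\xi_i)$ dominate those of $(\alpha_i)$, i.e. $(\xi_i)$ is weakly supermajorized by $(\alpha_i)$. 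The function $g(t) = e^{-t}$ is convex and strictly decreasing on $\mathbb{R}$, and for such $g$ one has $\sum_{i=1}^{v} g(\xi_i) \leqslant \sum_{i=1}^{v} g(\alpha_i)$ whenever $(\xi_i)$ is weakly supermajorized by $(\alpha_i)$; this is $\sum_{i=1}^v 1/x_i \leqslant \sum_{i=1}^v 1/a_i$.

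It then remains to obtain the strict inequality whenever $(x_i) \neq (a_i)$. If $(x_i) \neq (a_i)$ while $\prod_{i\le k} a_i = \prod_{i\le k} x_i$ for every $k$, dividing consecutive products forces $x_i = a_i$ for all $i$, a contradiction; hence some partial product is strict. Let $k_0$ be least with $\prod_{i\le k_0}a_i < \prod_{i\le k_0}x_i$, so $x_i = a_i$ for $i < k_0$ and $x_{k_0} > a_{k_0}$. I would then produce a feasible tuple $(x_i')$ with $\sum 1/x_i' > \sum 1/x_i$ by a small perturbation: if $k_0 = v$, simply decrease $x_v$ (only the last partial product is affected, and it has slack); otherwise decrease $x_{k_0}$ and increase $x_{k_0+1}$ keeping $x_{k_0}x_{k_0+1}$ fixed (this leaves the partial products for $k \geqslant k_0+1$ unchanged, keeps $\prod_{i\le k_0}$ above $\prod_{i\le k_0}a_i$ for a small move, and spreading apart an ordered pair with fixed product strictly increases the sum of the two reciprocals). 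Applying the non-strict bound to $(x_i')$ then yields $\sum 1/x_i < \sum 1/x_i' \leqslant \sum 1/a_i$.

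If one prefers to avoid majorization theory, the same result can be proved by induction on $v$: the case $v=1$ is immediate; in the inductive step, peel off any common leading terms (so that $a_1 < x_1$), and if some \emph{proper} partial product is tight, say $\prod_{i\le k}a_i = \prod_{i\le k}x_i$ with $1 \leqslant k < v$, split into the two independent instances on $\{1,\dots,k\}$ and $\{k+1,\dots,v\}$ --- each inherits the hypotheses and at least one has unequal sequences --- and add the two conclusions coming from the inductive hypothesis. The residual case, where no proper partial product is tight (so $a_1 < x_1$ with slack in every constraint), is where the real work lies: a perturbation/smoothing argument driving $(x_i)$ toward $(a_i)$ until a partial product becomes tight. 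I expect this case --- in particular the bookkeeping for ties among the $x_i$ and for the borderline two-coordinate moves that are only first-order neutral --- to be the main technical obstacle, which is exactly why Ambro--Barc\v{a}u and Nathanson bypass it by invoking the Muirhead-type inequality directly.
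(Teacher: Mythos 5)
You should first note that the paper itself contains no proof of Theorem \ref{ki}: it is quoted as a corollary of Muirhead's inequality with the proof delegated to \cite{AB15, Na23} (Ambro--Barc\v{a}u give a direct proof, Nathanson derives it from the Muirhead--Rado inequality). So your argument is necessarily a different route, and its core is sound: taking logarithms turns the partial-product hypothesis into the statement that $(\log x_i)$ is weakly supermajorized by $(\log a_i)$, and the decreasing-convex case of Hardy--Littlewood--P\'olya (Tomi\'c's theorem) applied to $g(t)=e^{-t}$ gives $\sum 1/x_i \leqslant \sum 1/a_i$. What this buys is a self-contained deduction from a standard majorization inequality rather than from the Muirhead-type symmetric-means machinery used in the cited sources; what it still owes is the strict inequality, which is exactly where your write-up is thin.

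The gap is in the perturbation step for strictness. When $k_0<v$ you decrease $x_{k_0}$ and increase $x_{k_0+1}$ at fixed product and then ``apply the non-strict bound to $(x_i')$'' --- but that bound was established for \emph{increasing} tuples, and if $x_{k_0+1}=x_{k_0+2}$ the perturbed tuple is no longer increasing; after re-sorting, the partial products at the positions inside the tie block are strictly smaller than before, so feasibility of the rearranged tuple is not automatic and you never check it (you flag ties only in your second, avowedly incomplete, inductive sketch). The hole is repairable: let $x_{k_0+1}=\cdots=x_j=t$ be the maximal tie block; then for every $m$ with $k_0\leqslant m\leqslant j-1$ the partial-product inequality at $m$ is automatically \emph{strict}, because an equality there, together with $x_i=a_i$ for $i<k_0$, feasibility at $m+1$ (which gives $a_{m+1}\leqslant x_{m+1}=t$, hence $a_i\leqslant t$ for $k_0+1\leqslant i\leqslant m$ by monotonicity of $(a_i)$), would force
$$x_{k_0}\,t^{\,m-k_0}\ =\ a_{k_0}\prod_{i=k_0+1}^{m}a_i\ \leqslant\ a_{k_0}\,t^{\,m-k_0}\ <\ x_{k_0}\,t^{\,m-k_0},$$
a contradiction; hence a sufficiently small move survives re-sorting and your argument goes through. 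Alternatively, you can avoid the perturbation entirely via the equality case of the majorization theorem: weak supermajorization yields a vector $u$ with $u_i\leqslant \log x_i$ and $u$ majorized by $(\log a_i)$, and since $e^{-t}$ is strictly decreasing and strictly convex, equality in $\sum 1/x_i\leqslant \sum e^{-u_i}\leqslant \sum 1/a_i$ forces $(\log x_i)=u$ and $u$ to be a permutation of $(\log a_i)$, i.e.\ $(x_i)=(a_i)$. With either repair your first proof is complete; the inductive alternative you sketch is not needed and, as you admit, not finished.
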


\begin{proof}[Proof of Theorem \ref{fthm2}]
Let $p<q\in \mathbb{N}$ such that $q$ is odd and $\Upsilon(p,q) = 2$. Let $(a_n)_{n=1}^\infty = \mathcal{G}(p/q)$. We shall prove, by induction on $m$, that for any increasing sequence of positive integers $x_1\leqslant \cdots\leqslant x_m$ with
$$\sum_{n=1}^{m}\frac{1}{a_n}\ \leqslant\ \sum_{n=1}^{m}\frac{1}{x_n} \ <\ \frac{p}{q},$$
we must have $a_n = x_n$ for all $1\leqslant n\leqslant m$.

Base case: when $m = 1$, the greedy algorithm guarantees that if $1/x_1 < p/q$, then $1/x_1\leqslant 1/a_1$. Hence, $1/x_1 = 1/a_1$ and so, $a_1 = x_1$. 

Inductive hypothesis: suppose that there exists $d\geqslant 1$ such that the claim is true whenever $m\leqslant d$.

Let us consider $m = d+1$. Assume that
\begin{equation}\label{ess3}\sum_{n=1}^{d+1}\frac{1}{a_n}\ \leqslant\ \sum_{n=1}^{d+1}\frac{1}{x_n} \ <\ \frac{p}{q},\end{equation}
which, combined with Proposition \ref{pl1}, gives
\begin{equation}\label{ess1}0\ <\ \frac{p}{q} - \sum_{n=1}^{d+1}\frac{1}{x_n}\ \leqslant\ \frac{p}{q} - \sum_{n=1}^{d+1}\frac{1}{a_n}\ =\ \frac{1}{q\prod_{n=1}^{d+1}a_n}.\end{equation}
Since $p/q - \sum_{n=1}^{d+1}1/x_n > 0$, we know that 
\begin{equation}\label{ess2}\frac{p}{q} - \sum_{n=1}^{d+1}\frac{1}{x_n}\ \geqslant\ \frac{1}{q\prod_{n=1}^{d+1} x_n}.\end{equation}
From \eqref{ess1} and \eqref{ess2}, we obtain
\begin{equation}\label{ess7}\prod_{n=1}^{d+1}a_n\ \leqslant\ \prod_{n=1}^{d+1}x_n.\end{equation}
Let $s$ be the largest positive integer at most $d+1$ such that 
$$\prod_{n=s}^{d+1}a_n\ \leqslant\ \prod_{n=s}^{d+1}x_n.$$
If $s = d+1$, then $1/a_{d+1} \geqslant 1/x_{d+1}$; hence,
$$\sum_{n=1}^d\frac{1}{a_n} \ \leqslant \ \sum_{n=1}^d \frac{1}{x_n}\ <\ \frac{p}{q}.$$
The inductive hypothesis guarantees $a_n = x_n$ for all $1\leqslant n\leqslant d$, which, along with \eqref{ess3}, gives
$1/a_{d+1}\leqslant 1/x_{d+1}$. By above, we must have $a_{d+1} = x_{d+1}$, and we are done. For the rest of the proof, we assume that $s \leqslant d$.
\begin{claim}\label{clll1}
For $j\in \{0, 1, \ldots, d-s\}$, it holds that
$$\prod_{n=s}^{s+j}a_n\ <\ \prod_{n=s}^{s+j} x_n.$$
\end{claim}
\begin{proof}
Suppose that for some $j_0\in \{0, 1, \ldots, d-s\}$, 
$$\prod_{n=s}^{s+j_0}a_n\ \geqslant\ \prod_{n=s}^{s+j_0} x_n.$$
Then
\begin{equation*}
\prod_{n=s+j_0+1}^{d+1} a_n\ \leqslant\ \frac{\prod_{n=s}^{d+1}x_n}{\prod_{n=s}^{s+j_0}a_n}\ \leqslant\ \frac{\prod_{n=s}^{d+1}x_n}{\prod_{n=s}^{s+j_0}x_n}\ =\ \prod_{n=s+j_0+1}^{d+1}x_n,
\end{equation*}
which contradicts the maximality of $s$.
\end{proof}

By Claim \ref{clll1}, we know that the two sequences $(a_n)_{n=s}^{d+1}$ and $(x_n)_{n=s}^{d+1}$ are distinct. 
Applying Theorem \ref{ki} to the two sequences, we have
\begin{equation}\label{ess4}\sum_{n=s}^{d+1}\frac{1}{x_n}\ <\ \sum_{n=s}^{d+1}\frac{1}{a_n}.\end{equation}
Therefore, $s$ must be at least $2$ for \eqref{ess3} to hold. Using \eqref{ess3} and \eqref{ess4}, we obtain
\begin{equation}\label{ess6}\sum_{n=1}^{s-1}\frac{1}{a_n}\ =\ \sum_{n=1}^{d+1}\frac{1}{a_n} - \sum_{n=s}^{d+1}\frac{1}{a_n}\ <\ \sum_{n=1}^{d+1}\frac{1}{x_n} - \sum_{n=s}^{d+1}\frac{1}{x_n}\ =\ \sum_{n=1}^{s-1}\frac{1}{x_n}.\end{equation}
The inductive hypothesis guarantees that $a_n = x_n$ for all $1\leqslant n\leqslant s-1$, which is absurd due to the strict inequality in \eqref{ess6}. This contradiction shows that $s = d+1$, and in this case, we already show that $a_n = x_n$ for all $1\leqslant n\leqslant d+1$. 
\end{proof}

\begin{rek}\normalfont
A nice problem for future investigation is whether it is true that when $\Upsilon(p,q)\in\{2,3\}$, $\mathcal{G}$ always gives the best $m$-term underapproximation for any $m\geqslant 3$. Theorems \ref{Nathm} and \ref{fthm2} give a positive answer for the case $\Upsilon(p,q) = 1$ and the case $\Upsilon(p,q) = 2$ and $q$ is odd, respectively. Similar to the proof of Theorem \ref{Nathm}, the proof of Theorem \ref{fthm2} employs the key Inequality \eqref{ess7}, resulting from the formula given in Proposition \ref{pl1}. This formula is different from \eqref{ef1}, where $q$ is divided by $\Upsilon(p,q)$. To prove the best $m$-term underapproximation for the case $\Upsilon(p,q) = 2$ and $q$ is even, we need to overcome this distinction. 
\end{rek}

\subsection*{Data availability}
No data is available for this paper.

\section{Appendix}
\subsection{Proof of Lemma \eqref{lp11}} Before proving Lemma \ref{lp11}, we need the following auxiliary lemma. 
\begin{lem}\label{lp50}
Let $q\geqslant 5$ and $u\geqslant 2$ with $(q+3)/u\in \mathbb{N}_{\geqslant 4}$. If $(q,u)\notin \{(17,2),(61,8)\}$, we have
\begin{equation}\label{ep60}\left\lfloor \frac{qu(u+1)}{q+3(u+1)}\right\rfloor\ >\ \frac{(qu+3)u(u+1)}{qu+3+3u(u+1)}-1.\end{equation}
\end{lem}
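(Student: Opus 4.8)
The plan is to follow the same template used throughout Section~\ref{best2}: reduce to a one-parameter family, eliminate the floor function via a division-with-remainder step, and finish by a case analysis that bounds $u$ and $s$ into a finite range for a brute-force check. Concretely, I would fix $u \geqslant 2$ and observe that $(q+3)/u \in \mathbb{N}_{\geqslant 4}$ forces $q$ to range over $\{ku - 3 : k \geqslant 4\}$. Substituting $q = ku-3$ into \eqref{ep60} turns the left side into $\lfloor u + (\text{something})/(ks+\cdots) \rfloor$ and the right side into $u + (\text{something})/(ks + \cdots + \text{small}) - 1$, analogously to the expressions $L$ and $R$ in the proof of Lemma~\ref{lp1} (here with $v=3$ and the ``$2$'' replaced by ``$3$''). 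Since Lemma~\ref{lp50} is the special case $s=1$ of what Lemma~\ref{lp11} will need (compare \eqref{ep60} with \eqref{ep30} at $s=1$, $v=3$), I would actually just run the $s=1$ specialization of that computation, keeping $v=3$.

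The key steps, in order: (1) Set $q = ku-3$, $k \geqslant 4$, and rewrite both sides of \eqref{ep60}; verify the relevant numerator is positive (the inequality $ku^2 > 3u + 2 \cdot(\text{const})$ type estimate, which holds since $k\geqslant 4$, $u\geqslant 2$). (2) Write the integer $ku^2 - (\text{linear in }u) $ appearing in the floor as $(ks+c)\ell + j$ with $0 \leqslant j \leqslant ks+c-1$ for the appropriate constant $c$ coming from the ``$3u$'' term, so that $L = u + \ell$ and $R$ becomes an explicit rational function of $\ell, j, u, k$. (3) Form $L - R$ as a single fraction and show its numerator is positive: for most residues $j$ this is immediate, and only the top residue(s) $j$ near $ks+c$ require work. (4) In that boundary subcase, assume for contradiction the numerator is $\leqslant 0$; combine with the defining equation from step (2) to derive an inequality of the shape $(k - C)u^2 \leqslant (\text{linear in }u)$, forcing $u$ into a finite interval and $k$ into a small finite set. (5) For each surviving $(k,u)$, solve the linear congruence for $\ell$; only finitely many pairs give an integer $\ell$, and for those one checks directly — this is exactly where the excluded pairs $(17,2)$ and $(61,8)$ (i.e. $(k,u)=(10,2)$ and $(k,u)=(8,8)$) appear as genuine exceptions, which is why the hypothesis removes them.

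The main obstacle I anticipate is step (4)--(5): with $v=3$ the ``$3u$'' correction term makes the residue bookkeeping in step (2) messier than in Lemma~\ref{lp1}, so the boundary subcase may split into several sub-subcases according to $j \in \{ks+c-1, ks+c-2, \dots\}$ and according to $k \bmod (\text{small number})$, and the brute-force ranges for $u$ could be somewhat larger. I expect this is precisely why the author states that ``the proof of the next lemma is similar, albeit more technically involved'' and defers Lemma~\ref{lp11} (hence also this auxiliary Lemma~\ref{lp50}) to the appendix. Everything else — positivity checks, the algebraic identity for $L-R$, and the final finite verification — should be routine, modulo care with \textsc{Mathematica}-style simplification of the rational functions involved.
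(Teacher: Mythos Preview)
Your identification of Lemma~\ref{lp50} as the $(s,v)=(1,3)$ instance of \eqref{ep30} is correct, but your plan to ``just run the $s=1$ specialization'' of the Lemma~\ref{lp11} machinery has a real gap in step~(4). With $s=1$, $v=3$, and the top residue $j'=k+2$ (your notation), the bounding inequality you would derive is $(k-6)u^2\leqslant 27u-12$. For $k\geqslant 7$ this does bound $u$, but for $k\in\{4,5,6\}$ it is vacuous, leaving \emph{infinitely} many $u$ --- not merely ``somewhat larger'' ranges. This is precisely why the paper excludes $(s,v)=(1,3)$ at the start of the proof of Lemma~\ref{lp11} and handles it via the separate Lemma~\ref{lp50}. (It happens that for each of $k=4,5,6$ the residue $j'=k+2$ is forbidden by a congruence mod $k+3$, so your approach can be salvaged, but that requires an extra argument you have not anticipated.)

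The paper avoids the issue by using a \emph{different} decomposition. Rather than writing $L=\big\lfloor u+(ku^2-6u-3)/(k+3)\big\rfloor$ and dividing $ku^2-6u-3$ by $k+3$, it writes
\[
L\ =\ \Big\lfloor u(u+1)-\frac{3(u+1)^2}{k+3}\Big\rfloor
\]
and performs the division $3(u+1)^2=\ell(k+3)+j$. The point is that $3(u+1)^2$ is independent of $k$: the contradiction hypothesis $ju^2\leqslant 3\ell$ then yields $81/(4j)>k+3$ directly, bounding $j\in\{1,2\}$ and $k$ simultaneously without any small-$k$ exception. The remaining finite list of $k$ is cut down by the quadratic congruence $3n^2\equiv j\pmod{k+3}$ (with $n=u+1$), and only then is $u$ bounded case by case --- which is where the genuine exceptions $(q,u)=(17,2)$ and $(61,8)$ emerge. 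So the paper's decomposition trades your ``bound $u$ first'' strategy for a ``bound $j,k$ first, then invoke a congruence'' strategy; the latter is what makes the argument close uniformly.
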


\begin{proof}
Fix $u\geqslant 2$ and let $q = uk-3$ for some $k\geqslant 4$. Substituting $q$ by $uk-3$ in \eqref{ep60}, the left side becomes
$$L\ :=\ \left\lfloor u(u+1)-\frac{3(u+1)^2}{k+3}\right\rfloor;$$
the right side becomes
$$R\ :=\ u(u+1)-\frac{3(u+1)^2}{k+3+3/u^2}-1.$$
Write 
\begin{equation}\label{ep70} 3(u+1)^2 = \ell(k+3)+j,\end{equation}
for $\ell\geqslant 0$ and $0\leqslant j\leqslant k+2$. If $j = 0$,
$$L - R\ =\ -\frac{3(u+1)^2}{k+3}+\frac{3(u+1)^2}{k+3+3/u^2}+1\ =\ \frac{3k+k^2u^2+6u(ku-3)}{(k+3)(3u^2+ku^2+3)} \ >\ 0.$$
If $j\geqslant 1$, 
$$L\ =\ u(u+1)-\ell-1\mbox{ and }R\ =\ u(u+1)-\ell-1-\frac{ju^2-3\ell}{(k+3)u^2+3}.$$
Therefore, $L-R > 0$ if and only if $ju^2 > 3\ell$, which we shall prove. Suppose otherwise that $ju^2\leqslant 3\ell$ and so, $u^2/\ell\leqslant 3/j$.
Then 
$$\frac{27}{4}u^2\ =\ 3\left(\frac{3u}{2}\right)^2\ \geqslant\ 3(u+1)^2\ =\ \ell(k+3)+j.$$
Dividing both sides by $\ell$ and using $u^2/\ell\leqslant 3/j$, we obtain
$$\frac{81}{4j}\ \geqslant\ \frac{27}{4}\frac{u^2}{\ell} \ \geqslant\ k+3+\frac{j}{\ell}\ >\ k+3.$$
Observe that $j\in \{1,2\}$ because $k\geqslant 4$. 

If $j = 1$, we have $4\leqslant k\leqslant 17$, and $k$ must satisfy $3(u+1)^2 = \ell(k+3)+1$. Hence, $u+1$ is a solution of the congruence equation 
$$3n^2\ \equiv\ 1 \mod (k+3),$$
which has a solution only for $k = 8, 10$ out of all integers between $4$ and $17$.

If $j =  2$, then $4\leqslant k\leqslant 7$. However, only  $k = 7$ makes the solution set of the congruence equation
$3n^2\ \equiv \ 2\mod (k+3)$ nonempty. We have narrowed the values of $k$ to $\{7, 8, 10\}$. We now examine each case separately.

Case 1: $j = 2$ and $k = 7$. By \eqref{ep70}, we have
$$\ell \ =\ \frac{3}{10}(u+1)^2-\frac{1}{5}.$$
Combining this with our assumption that $2u^2\leqslant 3\ell$, we obtain
$$11u^2-18u-3\ \leqslant \ 0,$$
which cannot hold for $u\geqslant 2$. 

Case 2: $j = 1$ and $k = 8$. The same argument as in Case 1 gives $u^2\leqslant 9u+3$; hence, $2\leqslant u\leqslant 9$. Only $u = 8$ gives an integer value for $\ell$, in which case $q = uk-3 = 61$. However, $(q, u)\neq (61,8)$ by our hypothesis. 

Case 3: $j = 1$ and $k = 10$. The same argument as the two above cases gives $2u^2\leqslant 9u+3$. Hence, $u \in \{2, 3, 4\}$. Only $u = 2$ gives an integer value for $\ell$ in \eqref{ep70}. It follows that $q = uk-3 = 17$. However, $(q, u)\neq (17,2)$ by our hypothesis. 

We have completed the proof. 
\end{proof} 

\begin{proof}[Proof of Lemma \ref{lp11}] Thanks to Lemma \ref{lp50}, we can exclude the case $(s, v) = (1,3)$ from consideration.
Fix $u\geqslant 2$. We prove \eqref{ep30} for all $q$ in the set 
$\left\{ku-3\,:\, k\geqslant 4\right\}$. Choose $k\geqslant 4$ and replace $q = ku-3$ in \eqref{ep30}. The left side of \eqref{ep30} becomes 
$$L\ :=\ \left\lfloor \frac{(ku-3)u(u+s)}{kus+3u}\right\rfloor\ =\ \left\lfloor u+\frac{ku^2-6u-3s}{ks+3}\right\rfloor.$$
The right side of \eqref{ep30} becomes
$$R\ := \ u + \frac{ku^4-6u^3-3u^2s+vu^2}{ku^2s+vs+3u^2}-1\ =\ u +\frac{ku^2-6u-3s+v}{ks+3+vs/u^2}-1.$$
We claim that $ku^2-6u-3s > 0$. Indeed, when $u = 2$, we have $s = 1$, and
$$ku^2-6u-3s \ =\ 4k-12-3 \ =\ 4k-15\ \geqslant\ 1;$$
when $u \geqslant 3$,
$$ku^2\ \geqslant\ 4u^2\ \geqslant\ 12u\ >\ 9u-3\ =\ 6u+3(u-1)\ \geqslant\ 6u+3s.$$
As in the proof of Lemma \ref{lp1}, we eliminate the floor function by writing 
\begin{equation}\label{ep50}ku^2-6u-3s \ =\ (ks+3)\ell+j,\end{equation}
for some $\ell\geqslant 0$ and $0\leqslant j\leqslant ks+2$. Then
$$L \ =\  u + \ell$$
and 
$$R\ =\ u+\ell + \frac{j+v-vs\ell/u^2}{ks+3+vs/u^2}-1.$$
Hence,
\begin{equation}\label{ep51}L- R\ =\ \frac{(ks-j)u^2+(3-v)u^2+vs+vs\ell}{(ks+3)u^2+vs}\ >\ 0.\end{equation}
Our goal is to show that the right side of \eqref{ep51} is positive. We proceed by case analysis. Similar to what we did in the proof of Lemma \ref{lp1}, we first classify values for $j$, then establish finite ranges for $u$, $s$, and $k$, and finally resort to a brute-force check over all possible triples $(u,s,k)$ to complete the proof. 

Case 1: If $0\leqslant j\leqslant ks$, then $L-R > 0$.

Case 2: If $j = ks+1$, then from \eqref{ep51}, 
$$L-R \ =\ \frac{(2-v)u^2+vs+vs\ell}{(ks+3)u^2+vs}.$$
If $v\leqslant 2$, $L-R>0$. Assume that $v = 3$. We need to show that $3s(\ell+1) > u^2$. Suppose, for a contradiction, that $3s(\ell+1) \leqslant u^2$. 
By \eqref{ep50},
\begin{align*}
    3ku^2 &\ =\ 3ks\ell+9\ell+3j+18u+9s\\
    &\ \leqslant\ k(u^2-3s) + 9(u^2/(3s)-1) + 3ks+3+18u+9(u-1)\\
    &\ =\ ku^2+3u^2/s+27u-15\ \leqslant\ (k+3)u^2+27u-15.
\end{align*}
Hence, 
\begin{equation*}5u^2\ \leqslant\ (2k-3)u^2 \ \leqslant\ 27u-15,\end{equation*}
which gives $2\leqslant u\leqslant 4$ and $4\leqslant k\leqslant 6$. For each value of $u$, $s$ is between $1$ and $u-1$. Feeding all these possible triples $(u, s, k)$ into \eqref{ep50}, we see that there is no triple $(u,s, k)$ that gives an integer value for $\ell$ and satisfies $3s(\ell+1)\leqslant u^2$. 

Case 3: $j = ks+2$. From \eqref{ep51}, 
$$L-R \ =\ \frac{(1-v)u^2+vs+vs\ell}{(ks+3)u^2+vs}.$$
If $v = 1$, $L-R > 0$. Assume that $v \geqslant 2$.

\begin{enumerate}
\item[i)] Case 3.1: $v = 2$. We shall show that $2s(\ell+1) > u^2$. Suppose, for a contradiction, that $2s(\ell+1) \leqslant u^2$. According to \eqref{ep50}, 
\begin{align*}
2ku^2&\ =\ 2ks\ell+6\ell+2j + 12u + 6s\\
&\ \leqslant\ k(u^2-2s) + 6\ell + 2ks+4+12u+6(u-1)\\
&\ \leqslant\ ku^2 + 6(u^2/(2s)-1) + 18u-2.
\end{align*}
Hence, 
$$u^2\ \leqslant\ (k-3)u^2 \ \leqslant\ 18u-8,$$
giving $2\leqslant u\leqslant 17$ and $4\leqslant k\leqslant 10$. For each value of $u$, $s$ is between $1$ and $u-1$. Feeding all these possible triples $(u, s, k)$ into \eqref{ep50}, we see that only the triple $(u,s, k) = (2,1,10)$ gives an integer value for $\ell$ and satisfies $2s(\ell+1)\leqslant u^2$. It follows that $q = 17$, contradicting $(q,u)\neq (17,2)$. 
\item [ii)] Case 3.2: $v = 3$. Suppose, for a contradiction, that $3s(\ell+1)\leqslant 2u^2$. We obtain from \eqref{ep50} that
\begin{align*}
    3ku^2 &\ =\ 3ks\ell + 9\ell + 3(ks+2) + 18u+9s\\
    &\ \leqslant\ k(2u^2-3s) + 9(2u^2/(3s)-1) + 3ks+6+18u+9(u-1)\\
    &\ =\ 2ku^2+6u^2/s + 27u-12.
\end{align*}
Hence, 
$$ku^2\ \leqslant\ 6u^2/s + 27u-12.$$

\begin{enumerate}
    \item[a)] If $s \geqslant 2$, then $u\geqslant 3$, and we have 
    $$u^2\ \leqslant\ (k-3)u^2\ \leqslant\ 27u-12;$$
    hence, $3\leqslant u\leqslant 26$ and $4\leqslant k\leqslant 13$. For each value of $u$, $s$ is between $2$ and $u-1$. Feeding all these possible triples $(u, s, k)$ into \eqref{ep50}, we see that none produces an integer value for $\ell$ and satisfies $3s(\ell+1)\leqslant 2u^2$. 
    \item[b)] If $s = 1$, then $(s, v) = (1,3)$, which is excluded as we state at the beginning of the present proof. 
\end{enumerate}
\end{enumerate}
\end{proof}

\subsection{Proof of Theorem \ref{fthm3}}

Plugging $(k,v) = (4j+3, 4s+8)$ into the quantity inside the floor function in \eqref{s5} to have
$$x\ :=\ \frac{(4j+3)(4(4j+3)(s+2)+1)(16(j+1)(s+2)-1)}{8j+7}.$$
\begin{claim}\label{cll5}
For $j\geqslant 6$, 
$$x-\lfloor x\rfloor \ =\ \frac{2s^2+6s+4j+8}{8j+7}.$$
\end{claim}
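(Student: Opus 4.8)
\textbf{Proof proposal for Claim \ref{cll5}.}

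The plan is to follow the same template used for Claims \ref{cls1}, \ref{cls2}: first show that the candidate fractional part $(2s^2+6s+4j+8)/(8j+7)$ lies strictly in $(0,1)$, and then verify that subtracting it from $x$ yields an integer, by having \textsc{Mathematica} (or a hand computation) expand $x - (2s^2+6s+4j+8)/(8j+7)$ into a polynomial in $j$ and $s$ with nonnegative integer coefficients. Together these two facts force the claimed identity, since a real number minus an integer equals its fractional part exactly when that difference lies in $[0,1)$.

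For the first step, I would use the constraint $s(s+1)\leqslant j$, which comes from our choice of $s$. This gives $2s^2+6s \leqslant 2j + 4s \leqslant 2j + 4j = 6j$ (using $s \leqslant j$, valid since $s\geqslant 2$ forces $s(s+1)\geqslant 6$, hence $j\geqslant 6 > s$ is not immediate — more carefully, $s\leqslant\sqrt{j}$, so $2s^2+6s\leqslant 2j+6\sqrt j$), and one checks $2s^2+6s+4j+8 < 8j+7$ reduces to $2s^2+6s+1 < 4j$, which follows from $s(s+1)\leqslant j$ after a short estimate (e.g. $2s^2+6s+1 \leqslant 2s(s+1)+4s+1 \leqslant 2j + 4s+1$, and since $s\leqslant j$ this is $\leqslant 6j+1\leqslant 8j$ — actually $\le 4j$ needs the sharper bound; one uses that for $j\ge 6$ and $s(s+1)\le j$ we have $s\le 2$ when $j$ is small and more generally $4s \le 2s(s+1) \le 2j$, giving $2s^2+6s+1\le j+2j+1\le 4j$ for $j\ge 1$). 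Positivity of the numerator is obvious. This is all elementary estimation with the given range of $s$.

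The second step is the computational heart: substitute $k=4j+3$, $v=4s+8$ into the formula for $x$, clear the denominator $8j+7$, subtract the candidate numerator, and confirm divisibility by $8j+7$ by exhibiting the quotient explicitly as a polynomial in $j,s$ with integer coefficients. As in the earlier claims, I expect \textsc{Mathematica} to return a clean polynomial; the only subtlety to watch is parity — one must confirm any rational-looking coefficient (a coefficient with a factor like $1/2$) actually multiplies an even expression in $s$, exactly as was done for $5s^2+27s$ in Claim \ref{cls1}. I anticipate this will work out cleanly here and the main obstacle is merely bookkeeping in the expansion rather than any genuine mathematical difficulty; the only real care needed is ensuring the fractional-part bound in step one is tight enough, which the constraint $s(s+1)\leqslant j\leqslant (s+1)(s+2)-1$ was precisely chosen to guarantee.
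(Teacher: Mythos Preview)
Your two-step plan is exactly the paper's approach: show the candidate fractional part lies in $(0,1)$, then exhibit $x$ minus that candidate as an explicit integer polynomial. For step two the paper records
\[
x-\frac{2s^2+6s+4j+8}{8j+7}=331+832j+512j^2+330s+832js+512j^2s+82s^2+208js^2+128j^2s^2,
\]
all integer coefficients, so no parity check of the Claim~\ref{cls1} type is needed here.

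Your step-one estimate, however, does not close. Each of your attempted chains only reaches $2s^2+6s+1\leqslant 4j+1$ (from $2s(s+1)\leqslant 2j$ and $4s\leqslant 2j$), which is one short of the required strict inequality $2s^2+6s+1<4j$. The missing ingredient is the hypothesis $s\geqslant 2$, which the paper invokes explicitly: from $j\geqslant s(s+1)$ one gets $4j\geqslant 4s^2+4s$, and then
\[
4s^2+4s-(2s^2+6s+1)=2s^2-2s-1>0\quad\text{for }s\geqslant 2,
\]
giving the strict bound directly. At $s=1$ this difference is $-1$, so the estimate genuinely fails without $s\geqslant 2$; your sketch never uses that condition.
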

\begin{proof}
Since $j\geqslant s(s+1)$ and $s\geqslant 2$, 
$$0\ <\ \frac{2s^2+6s+4j+8}{8j+7}\ <\ 1.$$
Furthermore, $x - (2s^2+6s+4j+8)/(8j+7) =$
$$331+832j+512j^2+330s+832js+512j^2s+82s^2+208js^2+128j^2s^2$$
is a positive integer. This completes our proof. 
\end{proof}

From the proof of Claim \ref{cll5}, we know that the right side of \eqref{s5} is
$$R\ :=\ 332+832j+512j^2+330s+832js+512j^2s+82s^2+208js^2+128j^2s^2.$$
Then \eqref{s5} is equivalent to 
\begin{align*}
&(4j+3)(4(4j+3)(s+2)+1)(16(j+1)(s+2)-1)+4j+3+\frac{1}{4s+8}\\
&\ > \ \left(8j+7+\frac{1}{(4j+3)(4s+8)^2}\right)R.
\end{align*}
\textsc{Mathematica} simplifies the inequality to
$$64(s+2)j^2 - 8(8s^3+40s^2+51s+7)j - (48s^3+240s^2+343s+115)\ <\ 0.$$
It suffices to verify that $j$ lies between the two roots of the equation
$$64(s+2)x^2 - 8(8s^3+40s^2+51s+7)x - (48s^3+240s^2+343s+115)\ =\ 0.$$
The two roots $x_1 < x_2$ are
$$\frac{4(8s^3+40s^2+51s+7)\pm\sqrt{\Delta}}{64(s+2)},$$
where $\Delta = 16(64s^6+640s^5+2608s^4+5536s^3+6453s^2+3918s+969)$. It is easy to verify that for all $s\geqslant 2$, 
$$x_1 \ <\ s(s+1)\ \leqslant\ j\ \leqslant\ (s+1)(s+2)/2-1 \ <\ x_2.$$

\section*{Statements and Declarations}
The authors have no competing interests to declare that are relevant to the content of this article.


\ \\

\begin{thebibliography}{9999}
\bibitem{AB15} F. Ambro and M. Barc\v{a}u, On representations by Egyptian fractions, \textit{Rev. Roum. Math. Pures Appl.} \textbf{60} (2015), 331--336.
\bibitem{BE22} T. F. Bloom and C. Elsholtz, Egyptian fractions, \textit{Nieuw Arch. Wiskd.}  \textbf{23} (2022), 237--245.
\bibitem{CEJ12} Y. Chen, C. Elsholtz, L. Jiang, Egyptian fractions with restrictions, \textit{Acta Arith.} \textbf{154} (2012), 109--123.
\bibitem{Ch23} H. V. Chu, Approximation by Egyptian fractions and the weak greedy algorithm, \textit{Indag. Math. (N.S.)} \textbf{34} (2023), 1303--1317.

\bibitem{Cu22} D. R. Curtis, On Kellogg's Diophantine problem, \textit{Am. Math. Mon.} \textbf{29} (1922), 380--387.
\bibitem{El16} C. Elsholtz, Egyptian fractions with odd denominators, \textit{Q. J. Math.} \textbf{67} (2016), 425--430. 
\bibitem{EP21} C. Elsholtz and S. Planitzer, Sums of four and more unit fractions and approximate parametrizations, \textit{Bull. Lond. Math. Soc.} \textbf{53} (2021), 695--709.
\bibitem{Ep21}  D. Eppstein, Egyptian fractions with denominators from sequences closed under doubling, \textit{J. Integer Seq.} \textbf{24} (2021), 6 pp.

\bibitem{ES64} P. Erd\H{o}s and E. G. Straus,  On the irrationality of certain Ahmes series, \textit{J. Indian Math. Soc. (N.S.)} \textbf{27} (1964), 129--133.
\bibitem{HV13} J. Huang and R. C. Vaughan, On the exceptional set for binary Egyptian fractions, 
\textit{Bull. Lond. Math. Soc.} \textbf{45} (2013), 861--874.
\bibitem{Ke21} O.D. Kellogg, On a Diophantine problem, \textit{Am. Math. Mon.} \textbf{28} (1921) 300--303.
\bibitem{LS22} N. Lebowitz-Lockard and V. Souza, A note on the number of Egyptian fractions, \textit{Integers} \textbf{23} (2023), 3 pp.
 \bibitem{Ma99} G. Martin, Dense Egyptian fractions, \textit{Trans. Amer. Math. Soc.} \textbf{351} (1999), 3641--3657.

\bibitem{Na23} M. B. Nathanson, Underapproximation by Egyptian fractions, \textit{J. Number Theory} \textbf{242} (2023), 208--234.
\bibitem{Na22} M. B. Nathanson, The Muirhead-Rado inequality, 2: symmetric means and inequalities, arXiv:2201.01270, 2022.
\bibitem{Sl23}  N. J. A. Sloane et al., \textit{The On-Line Encyclopedia of Integer Sequences}, 2023. Available at: \url{https://oeis.org}.
\bibitem{So05} K. Soundararajan, Approximating $1$ from below using $n$ Egyptian fractions, preprint (2005). Available at: \url{https://arxiv.org/abs/math/0502247}.
\end{thebibliography}
\end{document}